\documentclass[11pt,reqno]{amsart}
\usepackage{enumerate}
\usepackage{enumitem}
\usepackage{mathrsfs}
\usepackage{url}
\usepackage{cite}
\usepackage{comment}
\usepackage{fancyhdr}
\usepackage{amsfonts, amsmath, amssymb, amscd, amsthm, bm, cancel}
\usepackage{amsrefs}
\usepackage[mathlines]{lineno}
\usepackage[
linktocpage=true,colorlinks,citecolor=magenta,linkcolor=blue,urlcolor=magenta]{hyperref}
\usepackage[margin=1.3in]{geometry}
\newtheorem{thm}{Theorem}[section]

\newtheorem{rmk}{Remark}[section]

\newtheorem{lemma}{Lemma}[section]
\newtheorem{corollary}{Corollary}[section]

\numberwithin{equation}{section}
\allowdisplaybreaks

\begin{document}
\title[Support measures and Alexandrov-Fenchel type inequalities]{Modified support measures and Alexandrov-Fenchel type inequalities for nonsmooth h-convex bodies in hyperbolic space}

\author[X. Zhang]{Xinxiu Zhang}
\address{School of Mathematical Sciences, University of Science and Technology of China, Hefei 230026, P.R. China}
\email{\href{mailto:zhangxinxiu@mail.ustc.edu.cn}{zhangxinxiu@mail.ustc.edu.cn}}

\date{\today}
\subjclass[2020]{53C42; 53E10}
\keywords{hyperbolic space, Alexandrov-Fenchel inequality, h-convexity}
\begin{abstract}    

We establish the Alexandrov-Fenchel inequality and its modified version for arbitrary, possibly nonsmooth, horospherically convex bodies in hyperbolic space $\mathbb{H}^{n+1}, n\geq 2$, and prove that equality in each inequality holds if and only if the h-convex body is a geodesic ball. A key ingredient is a rigidity theorem characterizing geodesic balls via proportionality between modified support measures of adjacent orders. Its proof combines the $C^{1,\alpha}\cap W^{2,k+1}$ approximation of the outer parallel hypersurfaces  by smooth strictly h-convex hypersurfaces with the estimates for the total variation norm of the signed measure associated with the proportionality relation  along the  shifted curvature flows independent of initial curvature bounds. 

 For $0\leq k\leq n-1$, we construct weak flows preserving the $k$-th modified quermassintegral $\tilde{W}_k$ with arbitrary h-convex initial bodies as Hausdorff limits of the smooth flows. The rigidity theorem above characterizes the limit shape of the flow and  gives a positive lower bound on the rate of decrease of $\tilde{W}_{k+1}$ for a short time whenever the initial h-convex body is not a geodesic ball, which characterizes the equality cases in the modified Alexandrov-Fenchel inequality. 

For the Alexandrov-Fenchel inequality, we establish the Hausdorff stability independent of curvature bounds described by the Alexandrov-Fenchel deficits for h-convex bodies, which characterizes the equality cases in the Alexandrov-Fenchel  inequality. 
\end{abstract}
\maketitle
\tableofcontents

\section{Introduction}

The classical Alexandrov-Fenchel inequality for quermassintegrals of convex bodies in Euclidean space was proved by Alexandrov \cite{Alexandrov1937, Alexandrov1937II} and Fenchel \cite{Fenchel1936}.  The concepts of quermassintegrals in Euclidean space can be extended to space forms.
  Recall that a compact domain in hyperbolic space is called an h-convex body if it is the intersection of a class of horoballs with nonempty interior. In the smooth case, a smooth bounded domain is (strictly) h-convex if and only if the principal curvatures are  bounded below by $1$ (greater than $1$).
Wang and Xia \cite{WangXia2014}  obtained the Alexandrov-Fenchel inequality for smooth h-convex bodies in hyperbolic space. More precisely,  they proved 
\begin{align}\label{alexandrov}
 W_k(\Omega) \geq f_k \circ f_{\ell}^{-1}\left(W_{\ell}(\Omega)\right)   
\end{align}
for any smooth h-convex body $\Omega$ and $0\leq \ell<k\leq n$. Here $f_i:[0, \infty) \rightarrow \mathbb{R}_{+}$ is a monotone
function defined by $f_i(r)=W_i\left(B_r\right)$, and equality holds if and only if $\Omega$ is a geodesic ball.

Andrews, Chen and Wei \cite{AndrewsChenWei2021} introduced the concepts of modified quermassintegrals of  smooth h-convex bodies. More precisely, suppose that $\Omega$ is a smooth h-convex body and denote by 
\begin{align}\tilde{W}_k(\Omega):=\sum_{i=0}^k(-1)^{k-i}\binom{k}{i} W_i(\Omega)\label{Quermassine}\end{align} the modified quermassintegrals of $\Omega$. They considered the evolution of a closed smooth strictly h-convex initial hypersurface $M=\partial \Omega$ along the modified quermassintegral shifted curvature flow 
\begin{align}
\left\{\begin{aligned} 
\frac{\partial}{\partial t} X(x, t) & =\left(\phi(t)-\frac{\tilde{E}_{k+1}}{\tilde{E}_k}(W)\right) \nu(x, t), \\ 
X(\cdot,0) & =X_{0}(\cdot),\end{aligned}\right.\label{smoothflow1}
\end{align}
where  $X_{0}:N^n\rightarrow M=\partial\Omega$, $\nu(x, t)$ is the unit outer normal of the hypersurface $M_{t}:=X(N^n,t)=\partial \Omega_{t}$, $W$ is the Weingarten matrix of $M_{t}$, $\tilde{E}_k(x,t):=E_k(\mathcal{W}-I)$ is the shifted  $k$-th mean curvature of $M_{t}$ and  \begin{align}
    \phi(t):=\frac{\int_{M_{t}}\tilde{E}_{k+1}(\mathcal{W}) \mathrm{d} \mu }{\int_{M_{t}} \tilde{E}_k(W)d\mu} \label{phi}
\end{align} 
    is the global term chosen to preserve the $k$-th modified quermassintegral of the enclosed domains. They obtained the Alexandrov-Fenchel type inequality for modified quermassintegrals  of  smooth strictly h-convex bodies. More precisely, they obtained that for any smooth strictly h-convex body $\Omega$ there holds
$$\tilde{W}_k(\Omega) \geq \tilde{f}_k \circ \tilde{f}_l^{-1}\left(\tilde{W}_l(\Omega)\right), \quad 0\leq l<k \leq n,
$$
and equality holds if and only if $\Omega$ is a geodesic ball. Here the function $\tilde{f}_i: [0,\infty) \rightarrow \mathbb{R}_{+}$ is defined by $\tilde{f}_i(r)=\tilde{W}_i(B(r))=\sum_{j=0}^i(-1)^{i-j}\binom{i}{j} f_j(r)$. The inequality above is called the modified Alexandrov-Fenchel inequality.

Smooth strictly h-convex approximations for h-convex bodies and the continuity extend the inequality statements above to arbitrary h-convex bodies. But the equality rigidities can not be obtained directly by the smooth approximation. In this paper, we characterize the equality conditions in the Alexandrov-Fenchel inequality and the modified Alexandrov-Fenchel inequality.
\begin{thm}\label{A-F}
For any  h-convex body $\Omega$ in $\mathbb{H}^{n+1}$ and $0\leq l<k \leq n$, we have that
\begin{align*}
W_k(\Omega) &\geq f_k \circ f_l^{-1}\left(W_l(\Omega)\right),\\
\tilde{W}_k(\Omega) &\geq \tilde{f}_k \circ \tilde{f}_l^{-1}\left(\tilde{W}_l(\Omega)\right),
\end{align*}
and equality in each inequality holds if and only if $\Omega$ is a geodesic ball.
\end{thm}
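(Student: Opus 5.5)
Both inequalities follow from the smooth case by approximation. Let $\Omega$ be h-convex. The outer parallel bodies $\Omega_\rho$ (for $\rho>0$) can be approximated in Hausdorff distance by smooth strictly h-convex bodies $\Omega^{(j)}$. The quermassintegrals $W_i$ are continuous under Hausdorff convergence of h-convex bodies; for example, they can be written through the Steiner-type formula as integrals against support measures, and these measures are weakly continuous. Hence $\tilde W_i$ are continuous as well. We apply the inequality of Wang–Xia and of Andrews–Chen–Wei to $\Omega^{(j)}$ and let $j\to\infty$ and then $\rho\to0$. This gives both inequalities for every h-convex body. The compositions $f_k\circ f_l^{-1}$ and $\tilde f_k\circ\tilde f_l^{-1}$ are continuous and increasing, so passing to the limit is harmless. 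It also suffices to treat adjacent indices $l=k-1$. Indeed, if equality holds for a pair $l<k$, we chain the intermediate inequalities $W_k\ge f_k\circ f_{k-1}^{-1}(W_{k-1})\ge\cdots$, using monotonicity of the $f_i$. Then equality must hold in every adjacent step, in particular for $(k,k-1)$.

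\textbf{Modified inequality, equality case.} Suppose $\tilde W_k(\Omega)=\tilde f_k\circ\tilde f_{k-1}^{-1}(\tilde W_{k-1}(\Omega))$ and $\Omega$ is not a ball. Run the weak flow preserving $\tilde W_{k-1}$ that starts at $\Omega$. This flow is obtained as the Hausdorff limit of the smooth flows \eqref{smoothflow1} (with index $k-1$) started from smooth strictly h-convex approximations. Along the smooth flows $\tilde W_k$ is nonincreasing, and this monotonicity passes to the limit. The weak flow converges to the geodesic ball $B$ with $\tilde W_{k-1}(B)=\tilde W_{k-1}(\Omega)$, and the limit shape is identified by the rigidity theorem. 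I then need a strict loss of $\tilde W_k$ for a short time. The time derivative of $\tilde W_k$ along the smooth flow is the negative of a quantity that controls the total variation of the signed measure $\tilde{\mathcal S}_k-\phi\,\tilde{\mathcal S}_{k-1}$ (the difference of adjacent modified support measures). I expect this is the \textbf{main obstacle}. The bound has to be uniform, independent of curvature bounds of the approximations, so that it survives the limit. My plan is to use $C^{1,\alpha}\cap W^{2,k+1}$ approximations of the outer parallel surfaces and the weak convergence of modified support measures. If the signed measure were zero in the limit, the rigidity theorem would force a geodesic ball. Since $\Omega$ is not a ball, the measure is nonzero, and we get $\tilde W_k(\Omega_t)\le\tilde W_k(\Omega)-ct$ for small $t$ with some $c>0$. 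Then
$$
\tilde W_k(\Omega)>\tilde W_k(\Omega_t)\ge \tilde f_k\circ\tilde f_{k-1}^{-1}(\tilde W_{k-1}(\Omega_t))=\tilde f_k\circ\tilde f_{k-1}^{-1}(\tilde W_{k-1}(\Omega)),
$$
which contradicts equality.

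\textbf{Original inequality, equality case.} Here I would use a Hausdorff stability estimate that does not depend on curvature bounds: for h-convex $\Omega$, the Hausdorff distance to a suitable ball is bounded by a modulus of the deficit $W_k-f_k\circ f_{k-1}^{-1}(W_{k-1})$. I would first prove this for smooth strictly h-convex bodies. The argument compares $\Omega$ with the balls along the quermassintegral-preserving flow of Wang–Xia type, or reduces to the modified case through the relation \eqref{Quermassine}, together with the support-measure rigidity. The estimate must involve only quantities that are continuous in the Hausdorff topology, namely inradius, outer radius and quermassintegrals, and then it extends to all h-convex bodies by approximation. A zero deficit then gives Hausdorff distance zero, so $\Omega$ is a ball. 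Conversely, balls attain equality by the definition of $f_i$ and $\tilde f_i$.
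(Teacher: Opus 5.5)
Your inequality parts (approximation plus Hausdorff continuity of the $W_i$) and the reduction to adjacent indices follow the paper. So does the equality case of the modified inequality: there too one runs the weak flow of Theorem \ref{weakflowthm}. The rate of decrease of the next modified quermassintegral is bounded below by Cauchy--Schwarz, $\int\tilde E_k(F-\phi)^2\ge(\int|D|)^2/\int\tilde E_k$. This bound is kept positive for $j$ large and $t$ small by testing against a continuous $0<f<1$, using weak continuity of the modified support measures together with Theorem \ref{metricball}. (The $C^{1,\alpha}\cap W^{2,k+1}$ approximation of parallel surfaces belongs to the proof of Theorem \ref{metricball}, not to this step.)

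The genuine gap is the equality case of the \emph{original} inequality. Neither mechanism you offer gives an estimate that survives approximation. Stability obtained by comparing with balls along Wang--Xia type flows (Sahjwani--Scheuer) depends on bounds for curvature quotients, and these are not controlled under Hausdorff convergence. The support-measure rigidity is only qualitative. Upgrading it to a modulus by compactness would require knowing that a nonsmooth h-convex limit with zero deficit is a ball, which is exactly the claim. Nor does \eqref{Quermassine} by itself transfer equality to the modified family, since $f_k\circ f_{k-1}^{-1}(W_{k-1})$ is evaluated at the radius fixed by $W_{k-1}$, not by any $\tilde W_i$.

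The missing idea is an algebraic reduction to the isoperimetric deficit, using the modified inequality you have already proved. Let $r_i=f_i^{-1}(W_i(\Omega))$ and $\tilde r_i=\tilde f_i^{-1}(\tilde W_i(\Omega))$. Then $\tilde r_0=r_0$ and $\tilde r_i$ is nondecreasing in $i$. Since $\tilde f_i'(r)=\omega_n\sinh^{n-i}r\,e^{-ir}>0$, the sign of $\tilde W_i-\tilde f_i(r_l)$ is that of $\tilde r_i-r_l$. Hence it is negative for $i<a$ and nonnegative for $i\ge a$, for some $a$. Inverting \eqref{Quermassine},
\[
0=W_l-f_l(r_l)=\sum_{i=0}^{l}\binom{l}{i}\bigl(\tilde W_i-\tilde f_i(r_l)\bigr),\qquad D_{k,l}(\Omega)=\sum_{i=0}^{k}\binom{k}{i}\bigl(\tilde W_i-\tilde f_i(r_l)\bigr).
\]
Because $\binom{k}{i}/\binom{l}{i}$ is strictly increasing in $i$, this gives $D_{k,l}(\Omega)\ge C(k,l)\bigl(f_0(r_l)-f_0(r_0)\bigr)$. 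So $D_{k,l}=0$ forces $r_l=r_0$. By the chain of original inequalities, $r_0=r_1=\dots=r_k$, and in particular $D_{1,0}(\Omega)=\tilde D_{1,0}(\Omega)=0$, which is isoperimetric equality.

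The paper closes with the quantitative isoperimetric inequality of B\"ogelein--Duzaar--Scheven, $\alpha^2\le C\,D_{1,0}$. It holds for all sets of finite perimeter and is therefore free of curvature bounds; this is Theorem \ref{stability}. Alternatively, you can finish with your own modified equality case for the pair $(1,0)$.
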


We note that the equality rigidities in the smooth case follow by classical techniques. If we consider smooth h-convex bodies, the equality rigidity of the Alexandrov-Fenchel inequality is directly obtained by considering the $1$-homogeneous Quermassintegral preserving curvature flow introduced by Wang and Xia \cite{WangXia2014}.  For the modified Alexandrov-Fenchel inequality, noticing that any bounded smooth domain in hyperbolic space has at least one point at the boundary at which the principal curvatures are strictly greater than $1$, one can consider the local variation around this point, which is similar to arguments in \cite[Theorem 2]{GuanLi2009}, to obtain the equality rigidity of the modified Alexandrov-Fenchel inequality for smooth h-convex bodies. The main difficulty is to establish the equality rigidities for  h-convex bodies which may not be smooth.

We adapt different arguments to obtain the equality conditions above. For the Alexandrov-Fenchel inequality, we consider the Hausdorff stability, which is independent of the curvature bounds, of the Alexandrov-Fenchel inequality described by the deficit $W_k(\Omega)-f_k\circ f_{l}^{-1}(W_l(\Omega))$. For the  modified Alexandrov-Fenchel inequality, we characterize the geodesic balls among h-convex bodies by the proportionality between modified support measures of adjacent orders and extend the modified quermassintegrals preserving flows \eqref{smoothflow1} to arbitrary initial h-convex bodies. We first introduce the rigidity theorem for modified support measures.

Curvature measures and area measures extend the concept of  curvature quantities of smooth convex hypersurfaces to  convex bodies without the smooth assumption in Euclidean space $\mathbb{R}^{n+1}$. Area measures were introduced by Alexandrov \cite{Alexandrov1937} and by Fenchel and Jessen \cite{FenchelJenssen}. And  Federer \cite{Federer1959} introduced the curvature measures for sets of positive reach in Euclidean space. Then Schneider \cite{Schneider1979} linked concepts of curvature measures and area measures together through the concept of support measures. The support measures are concentrated on the unit normal bundles and weakly continuous with respect to the  Hausdorff metric of convex bodies, see \cite[Chapter 4]{Schneider}.  Kohlmann \cite{Kohlmann1991} extended the concept of the curvature measures to  sets of positive reach in space forms.  By the calculations in \cite{Kohlmann1991}, one can obtain the support measures defined on the normal bundles for convex bodies in space forms. 

In this paper, we focus on  h-convex bodies in the hyperbolic space $\mathbb{H}^{n+1}$.  For a smooth strictly h-convex body $\Omega$, Andrews, Chen and Wei \cite{AndrewsChenWei2021} developed  the horospherical Gauss map, which maps each point $x$ on $\partial \Omega$ to the ideal point related to the direction opposite to that of the outer normal vector of $\partial\Omega$ at $x$. The ideal boundary $\partial _\infty\mathbb{H}^{n+1}$ is identified with $\mathbb{S}^n$. Moreover, they determined the smooth  strictly h-convex body by its horospherical support functions and introduced a corresponding positive $(0,2)$ tensor $A(\varphi_\Omega)$, which is related to the inverse of the shifted Weingarten map $W-I$. See also \cite{Espinar09} for an earlier related construction of the horospherical Gauss map with different settings. Using this description, Li and Xu \cite{LiXu2022} introduced horospherical $p$- surface area measures of smooth strictly h-convex bodies through the hyperbolic $p$-sum structure. The case $p=0$ corresponds to the modified area measures we considered here. The author \cite{Zhang2026} introduced a concept of modified curvature measures for h-convex bodies in hyperbolic space.  Li and Xu \cite{LiXu2022} derived a Steiner type  formula for modified quermassintegrals of  smooth strictly h-convex bodies. Combining their calculations with the local Steiner formula for sets of positive reach in \cite{Kohlmann1991},  one can derive a modified version of the Steiner formula for arbitrary h-convex body. The modified area measures and modified curvature measures are actually the coefficients in the modified Steiner formula. The natural projection of the unit normal bundle and a projection to the ideal boundary recovered the modified curvature measures and the modified area measures as the modified support measures, respectively. 

There are several works characterizing convex bodies via area measures and curvature measures. For area measures,  the determination, up to translation, of a convex body with nonempty interior by the  area measures was introduced by Alexandrov \cite{Alexandrov1937II} and  Fenchel and Jessen \cite{FenchelJenssen} independently.  Berg \cite{Berg1969} used spherical potential theory to relate  the support function of a convex body to its first order area measure and obtained the characterization of the first area measure of a convex body. In particular, since the $0$-th order area measure of any convex body is a Lebesgue measure on $\mathbb{S}^n$, the proportionality of the $0$-th order area measure and first order area measure characterizes the geodesic ball. For curvature measures,  Schneider \cite{Schneider1979} characterized the geodesic balls via curvature measures, which is a result generalizing the classical Alexandrov theorem. 
Kohlmann subsequently established the generalized Minkowski integral formulas for compact convex bodies \cite{Kohlmann94} and characterized the geodesic balls via curvature measures \cite{Kohlmann1998} in space forms.
In the author's earlier work \cite{Zhang2026}, the author proved that, for any h-convex body $\Omega$, if 
\begin{align*}
    \tilde{V}_{k}(\Omega,\cdot)=c\tilde{V}_n(\Omega,\cdot), \quad c>0,\quad 0\leq k\leq n-2,
\end{align*}
then $\Omega$ is a geodesic ball.

Let $\tilde{S}_k$, $\tilde{V}_k$ and $\tilde{\Phi}_k$ be the $k$-th modified  area measures, the $k$-th  modified curvature measures and the $k$-th modified support measures for h-convex bodies. In Section \ref{preli} we will introduce the concepts in more detail. In this paper, we prove that the proportionality between modified support measures of adjacent orders characterizes geodesic balls.

 \begin{thm}\label{metricball}
    Suppose that $\Omega$ is an h-convex body and $0\leq k\leq n-1$. If there exists a constant $c>0$ such that  
    \begin{align}
     \tilde\Phi_{n-k-1}(\Omega,\cdot)=c\tilde{\Phi}_{n-k}(\Omega,\cdot),\label{conditionsofrigisupp} 
    \end{align}
    then $\Omega$ is a geodesic ball.
\end{thm}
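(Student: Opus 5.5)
Following the strategy announced in the abstract, I will run the shifted curvature flow \eqref{smoothflow1} from smooth strictly h-convex approximations and show that the proportionality \eqref{conditionsofrigisupp} forces the body to stay fixed, so it must be the limiting geodesic ball.

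\emph{Step 1: from measures to a pointwise identity on smooth bodies.} Let $\Omega_\rho$ be the outer parallel body of $\Omega$ at distance $\rho>0$. Its boundary is $C^{1,1}$ and h-convex with principal curvatures bounded above. By the modified Steiner formula (the local Steiner formula of Kohlmann combined with Li--Xu's computation), the modified support measures of $\Omega_\rho$ are explicit combinations of those of $\Omega$, obtained by pushing forward along the normal bundle map. Hence \eqref{conditionsofrigisupp} turns into a relation among adjacent-order measures of $\Omega_\rho$. For $C^{1,1}$ bodies these measures have densities $\tilde E_{n-k-1}$ and $\tilde E_{n-k}$ (in terms of $\mathcal W-I$) with respect to $d\mu$. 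The relation therefore says that, a.e. on $\partial\Omega_\rho$, the ratio $\tilde E_{k+1}/\tilde E_k$ of the relevant shifted curvatures differs from a constant by a controlled quantity tending to zero in a suitable sense. The index bookkeeping ($n-k$ versus $k$) will be matched by passing between curvature and area sides through the horospherical Gauss map and the duality $A(\varphi_\Omega)\leftrightarrow(\mathcal W-I)^{-1}$.

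\emph{Step 2: smooth approximation and the flow.} Approximate $\partial\Omega_\rho$ in $C^{1,\alpha}\cap W^{2,k+1}$ by smooth strictly h-convex hypersurfaces $M^j$, and run \eqref{smoothflow1} from each $M^j$. Along the flow, $\tilde W_k$ is preserved and $\tilde W_{k+1}$ is monotone. The derivative of $\tilde W_{k+1}$ equals minus an integral of the form $\int(\tilde E_{k+1}-\phi\tilde E_k)\bigl(\tilde E_{k+1}/\tilde E_k-\phi\bigr)d\mu$, which is controlled by the total variation of the signed measure $\tilde E_{k+1}d\mu-\phi\,\tilde E_k d\mu$.

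\emph{Step 3: the key estimate.} I will show that this total variation norm is, at time $t$, bounded by its value at time $0$ times a constant depending only on $t$ and on the inradius and outradius, and not on curvature bounds. At $t=0$ the norm is small by Step 1 and the $W^{2,k+1}$ convergence. Consequently the decrease of $\tilde W_{k+1}$ on $[0,T]$ tends to zero as $j\to\infty$ and $\rho\to0$. By Hausdorff compactness of h-convex bodies with bounded diameter, together with the fact that the flow converges to a geodesic ball with the same $\tilde W_k$, I conclude that $\tilde W_{k+1}(\Omega)$ equals $\tilde W_{k+1}$ of that ball. Alternatively, the smallness of the measure norm uniformly in time pins every time-slice to $\Omega$ itself, so $\Omega$ equals the limit ball.

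\emph{Main obstacle.} Step 3 is the hardest part. Because $\Omega$ is nonsmooth, the approximations have no uniform curvature bounds, so the standard parabolic estimates cannot be used. I would first try to derive an evolution inequality for $\int|\tilde E_{k+1}-\phi\tilde E_k|\,d\mu$ in the spirit of a Minkowski formula, using only that the speed is bounded (by h-convexity, $\tilde E_{k+1}/\tilde E_k\le$ something controlled by $\tilde E_1$, which integrates to a quermassintegral) and Gronwall. The smoothing of the speed would be handled by the concavity and inverse-concavity of $\tilde E_{k+1}/\tilde E_k$.
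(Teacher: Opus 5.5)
There is a genuine gap. Your Steps 1--2 and the $L^1$ estimate in Step 3 agree with the paper: outer parallel bodies pushed forward along $\mathcal N_\Omega$ give $\int_{\partial\Omega^\varepsilon}|\tilde E_{k+1}-c'\tilde E_k|\,d\mu\to0$, followed by $C^{1,\alpha}\cap W^{2,k+1}$ smoothing and Lemma \ref{lemmaupperboundsonintD}. The endgame, however, does not work. The Gronwall factor is $\exp(C(t+\sqrt t))$, so $\int|D|\,d\mu_t$, and hence the loss of $\tilde W_{k+1}$, is small only on bounded intervals $[0,T]$. At best you learn that the weak flow from $\Omega$ keeps $\tilde W_{k+1}$ constant for all finite times. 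To reach $t=\infty$ you need this weak flow to converge to a geodesic ball. The smooth flows from $\Omega_j$ do converge to balls, but not uniformly in $j$, so you cannot interchange the limits. Even granting that, the conclusion $\tilde W_{k+1}(\Omega)=\tilde f_{k+1}\circ\tilde f_k^{-1}(\tilde W_k(\Omega))$ only says that $\Omega$ is an equality case of the modified Alexandrov--Fenchel inequality. Both the convergence of the weak flow (Theorem \ref{weakflowthm} identifies the limit shape using Theorem \ref{metricball}) and the equality case for nonsmooth bodies are deduced in the paper \emph{from} Theorem \ref{metricball}, so your argument is circular. Your alternative of pinning every time slice to $\Omega$ fails for related reasons. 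The smallness is not uniform in time. Smallness of $\int\tilde E_k|F-\phi|\,d\mu$ does not control the normal speed where $\tilde E_k$ degenerates. And a stationary nonsmooth body carries no shape information without a rigidity theorem for it.

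The missing idea is to use the flow only for a short positive time, as a regularization that \emph{lowers the order} of the proportionality. At time $t>0$ you have the curvature-independent bound $F\le C(1+t^{-1/2})$ of Lemma \ref{uniformupperboundsonF}, obtained by Tso's technique. This pointwise bound is also what closes your Gronwall step, since $\frac{d}{dt}\int|D|$ involves $\sup F$; the bound of $F$ by $\tilde E_1$ you suggest only controls $F$ in an integral sense. With this bound, Newton--Maclaurin gives $(\tilde E_{k+2}-\phi\tilde E_{k+1})_+\le F\,|\tilde E_{k+1}-\phi\tilde E_k|$. The modified Minkowski formula (Theorem \ref{shiftedminkow}) controls the negative part, which yields \eqref{controll}. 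Choosing $t$ small, then $j$ large, and using Theorem \ref{weakconsupport} gives $\tilde\Phi_{n-k-2}(\Omega,\cdot)=c_1\tilde\Phi_{n-k-1}(\Omega,\cdot)$. Iterating reaches $\tilde\Phi_0=\tilde c\,\tilde\Phi_1$, whose projection to the ideal boundary is $\tilde S_0=\tilde c\,\tilde S_1$; by \eqref{areameasure} this reads $\mathrm{tr}A[\varphi_\Omega]=\frac{1}{\tilde c\,\varphi_\Omega}\,d\mathcal L^n$. Semi-convexity then forces $\varphi_\Omega\in C^{1,1}$, elliptic bootstrapping gives smoothness, and the smooth rigidity of Li--Xu finishes the proof (Theorem \ref{arearigidity}). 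This is the only place the horospherical duality enters. The index matching you worried about in Step 1 is immediate, since $\tilde\Phi_{n-k}$ has density $\binom nk g_\Omega\tilde E_k$.
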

 We note that, in the rigidity results for curvature measures above, the proportionality conditions involve the $n$-th order curvature measures. Actually, by \cite[Theorem 5.14 (4)]{Federer1959}, the proportionality between curvature measures of adjacent orders without $n$-th order curvature can not characterize geodesic balls in Euclidean space $\mathbb{R}^{n+1}$. And Schneider \cite{Schneider1978} pointed out that the proportionality of area measures of two adjacent orders without $0$-th order area measure can not characterize geodesic balls. Since the support measures contain more geometric information, we consider the Theorem \ref{metricball}.

In the following we reduce \eqref{conditionsofrigisupp} to proportionality between the support measures of the  lowest adjacent  orders. 
By the ideal boundary projection, for an h-convex body $\Omega$, if $\tilde{\Phi}_0(\Omega,\cdot)=c\tilde{\Phi}_1(\Omega,\cdot),$ we have $$\tilde{S}_0(\Omega,\cdot)=c\tilde{S}_1(\Omega,\cdot).$$ 
 The following Theorem is therefore a key ingredient.
 \begin{thm}\label{arearigidity}
    Suppose that $\Omega$ is an h-convex body. If there exists a constant $c>0$ such that  
    \begin{align}\label{areatiaojian}
     \tilde S_{0}(\Omega,\cdot)=c\tilde{S}_{1}(\Omega,\cdot),
     \end{align}
    then $\Omega$ is a geodesic ball.
\end{thm}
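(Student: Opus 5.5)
The plan is to reduce the measure identity to a statement about a quantity that strictly decreases along a smooth flow. Then we pass to the nonsmooth body by approximation, making sure that every estimate is controlled only by the total variation of the signed measure $\mu_\Omega:=\tilde S_0(\Omega,\cdot)-c\tilde S_1(\Omega,\cdot)$ on $\mathbb{S}^n$, and never by curvature bounds. By hypothesis $\mu_\Omega=0$. For a smooth strictly h-convex body, write $\tilde S_0$ and $\tilde S_1$ in terms of the horospherical support function $\varphi$ and the tensor $A(\varphi)$ of Andrews--Chen--Wei. Then $d\tilde S_0$ is the Jacobian of the inverse horospherical Gauss map with all shifted principal radii replaced by $1$, and $d\tilde S_1$ is, up to normalisation, the trace of $A(\varphi)$ times that density, i.e.\ $\tilde E_{n-1}/\tilde E_n$ pulled back to $\mathbb{S}^n$. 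Testing against the constant function $1$ and a suitable horospherical support-type function (the analogue of the Minkowski formulas) should pin down $c$ in terms of $\tilde W_{n-1}$ and $\tilde W_n$, exactly as it is for geodesic balls.

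The first step is regularisation. Replace $\Omega$ by its outer parallel bodies $\Omega_\rho$, $\rho>0$ small. These are $C^{1,1}$ and h-convex, with shifted principal curvatures bounded above and $\tilde W_k(\Omega_\rho)\to\tilde W_k(\Omega)$. Using the modified Steiner formula (Kohlmann's local Steiner formula combined with the Li--Xu computation), the measures $\tilde S_j(\Omega_\rho,\cdot)$ are explicit finite combinations, with $\rho$-dependent coefficients, of the $\tilde S_i(\Omega,\cdot)$ pushed forward along the normal bundle. From $\mu_\Omega=0$ we therefore obtain
\[
\|\tilde S_0(\Omega_\rho,\cdot)-c_\rho\tilde S_1(\Omega_\rho,\cdot)\|_{TV}\le C\rho
\]
for suitable constants $c_\rho\to c$. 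Next, approximate each $\partial\Omega_\rho$ in $C^{1,\alpha}\cap W^{2,2}$ by smooth strictly h-convex hypersurfaces $M^{\rho,j}$. The $W^{2,2}$ convergence of the second fundamental forms gives $L^1$-convergence of the densities of $\tilde S_0$ and $\tilde S_1$. Hence the total variation of the corresponding signed measure for $M^{\rho,j}$ is at most $C\rho+o_j(1)$.

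The second step runs the flow \eqref{smoothflow1} with $k=n-1$ (the case adapted to $\tilde S_0$ and $\tilde S_1$) from $M^{\rho,j}$. This flow preserves $\tilde W_{n-1}$ and, by Andrews--Chen--Wei, decreases $\tilde W_n$ monotonically and converges to a geodesic ball. The variation formula gives
\[
-\frac{d}{dt}\tilde W_n=\int_{M_t}\tilde E_{n}\Big(\frac{\tilde E_n}{\tilde E_{n-1}}-\phi\Big)\ge 0,
\]
and, rewritten on $\mathbb{S}^n$ via the horospherical Gauss map, this rate is bounded by a constant times the total variation of $\tilde S_0(M_t)-\phi(t)^{-1}\tilde S_1(M_t)$. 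One then needs a short-time estimate, independent of initial curvature bounds and using only the $C^{1,\alpha}$ control (Krylov--Safonov/local parabolic estimates for the inverse-type speed), showing that this total variation stays $\le C(\rho+o_j(1))$ on a fixed interval $[0,t_0]$. Integrating, $\tilde W_n(\Omega_\rho)-\tilde W_n(M^{\rho,j}_{t_0})\to0$ as $j\to\infty$ and then $\rho\to0$.

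The third step is the limit and rigidity. The Hausdorff limits of $M^{\rho,j}_t$ define a weak flow $\Omega_t$ from $\Omega$, and along it $\tilde W_{n-1}$ and $\tilde W_n$ are both constant on $[0,t_0]$. If $\Omega$ were not a ball, I would show that $\tilde W_n$ has a positive rate of decrease at short times by a strict lower bound coming from the smooth nearly-umbilic-failure points, contradicting the constancy. I expect the main obstacle to be the second step: controlling the total variation of the signed measure along the smooth flow uniformly as the initial curvature blows up. I would first try a maximum-principle evolution for $\tilde E_n/\tilde E_{n-1}-\phi$ combined with an $L^1$-contraction (Kato-type) inequality for its positive and negative parts, since the speed is concave and inverse-concave in the shifted radii.
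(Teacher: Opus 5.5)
\textbf{There is a genuine gap, and it is in your third step; the approach is circular.} Your first two steps closely mirror the machinery the paper uses for Theorem \ref{metricball}: outer parallel bodies, $C^{1,\alpha}\cap W^{2,p}$ smoothing, and the bound $\int_{M_t}|D|\,d\mu_t\le e^{C(\sqrt t+t)}\int_{M_0}|D|\,d\mu$ of Lemma \ref{lemmaupperboundsonintD}. That machinery only transports the proportionality $\tilde S_0=c\,\tilde S_1$ to the weak flow $\Omega_t$ for $t\in[0,t_0]$ and shows that $\tilde W_n$ is constant there. This is fully consistent with the hypothesis, so no contradiction arises.

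To get one you would need a lower bound, uniform in $j$, on $\int_{M_{j,t}}\tilde E_{n-1}(F-\phi_j)^2\,d\mu$. Equivalently, you need a lower bound on $\int_{M_{j,t}}|\tilde E_n-\phi_j\tilde E_{n-1}|\,d\mu$. By weak continuity of the support measures, such a bound is available precisely when the limit body is known not to satisfy $\tilde\Phi_0=c\,\tilde\Phi_1$. For a non-ball body, that is the statement you are proving. In the paper the logic runs the other way: the strict decrease of $\tilde W_{k+1}$ in Theorem \ref{weakflowthm} comes from Theorem \ref{metricball}, which itself rests on Theorem \ref{arearigidity}.

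Your appeal to ``nearly-umbilic-failure points'' does not close this. For nonsmooth $\Omega$ the non-umbilic behaviour of the approximants can concentrate, and none of your estimates prevents it. You also have no curvature estimates making $\Omega_t$ smooth for $t>0$; Lemma \ref{uniformupperboundsonF} only bounds $F$ from above. So you cannot invoke smooth rigidity at positive times either. Sending $t\to\infty$ does not help: it would only give equality in the modified Alexandrov--Fenchel inequality, whose nonsmooth characterization is again what is at stake, and your $L^1$ bound grows exponentially in $t$.

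\textbf{The missing idea: at orders $0$ and $1$ the hypothesis is an equation on $\mathbb{S}^n$ that is linear in the Hessian of the horospherical support function, so elliptic regularity replaces the flow.}
\begin{enumerate}
\item By \eqref{areameasure}, $\tilde S_0=c\,\tilde S_1$ says $\mathrm{tr}\,A[\varphi_\Omega]=\frac{1}{c\varphi_\Omega}\,d\mathcal{L}^n$ as measures. Hence $[\Delta\varphi_\Omega]$ is absolutely continuous with bounded density.
\item Since $\varphi_\Omega$ is semi-convex, the singular part of $[\overline\nabla^2\varphi_\Omega]$ is a nonnegative matrix measure with zero trace, so it vanishes. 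The absolutely continuous Hessian is bounded below and has bounded trace, so it is bounded.
\item Therefore $\varphi_\Omega\in C^{1,1}$ and satisfies $\varphi_\Omega\Delta\varphi_\Omega=\frac1c+\frac n2\big(|\overline\nabla\varphi_\Omega|^2-\varphi_\Omega^2+1\big)$ almost everywhere. Schauder bootstrapping then makes $\varphi_\Omega$ smooth.
\item Now pass to $\varphi_{\Omega^\varepsilon}=e^{\varepsilon}\varphi_\Omega$. It has $A[e^\varepsilon\varphi_\Omega]=e^\varepsilon A[\varphi_\Omega]+\frac{\sinh\varepsilon}{\varphi_\Omega}\,\overline g>0$. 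It satisfies a trace identity of the same type with a new constant.
\item So $\Omega^\varepsilon$ is a smooth strictly h-convex body with $\tilde S_0$ proportional to $\tilde S_1$. The smooth rigidity of Li--Xu makes $\Omega^\varepsilon$ a geodesic ball, hence $\Omega$ is one.
\end{enumerate}
Your Step~1 observation actually holds exactly, not just up to $O(\rho)$: the ideal point of a normal is unchanged by the normal geodesic flow, so the outer parallel body satisfies the proportionality exactly. That exactness is what the paper uses in the final step above.
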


In the smooth strictly h-convex case, the prescribed curvature problem of modified area measures has  relations with the   Nirenberg-Kazdan-Warner problem, which were mentioned in \cites{Espinar09,LuoWei2025,LiXu2022}. 
If $\Omega$ is smooth strictly h-convex, Theorem \ref{arearigidity} follows in \cite[Theorem 8.1 (7)]{LiXu2022}.  
 
In this paper, we associate each h-convex body in hyperbolic space $\mathbb{H}^{n+1}(n\geq 2)$ with a nonnegative Radon measure valued $(0,2)$-tensor $A[\varphi_\Omega]$ on the sphere $\mathbb{S}^n$ through its horospherical function. Using spherical convolution, we prove that any h-convex body can be approximated by a sequence of smooth strictly h-convex bodies in the Hausdorff sense. 
\begin{lemma}\label{smoothapproxiamate}
    For any h-convex body $\Omega\subset \mathbb{H}^{n+1}$, there exists a sequence of smooth strictly h-convex bodies $\left\{\Omega_i\right\}_{i\in \mathbb{N}}$ such that $\Omega\subseteq\Omega_{i+1}\subseteq\Omega_{i}(\Omega_{i}\subseteq\Omega_{i+1}\subseteq\Omega)$ and $$\Omega_i\rightarrow \Omega\, \text{in the Hausdorff sense},\,\, \text{as}\, i\rightarrow \infty.$$
\end{lemma}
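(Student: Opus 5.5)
The plan is to work with the horospherical support function of $\Omega$, defined on $\mathbb{S}^n\cong\partial_\infty\mathbb{H}^{n+1}$. For each $z\in\mathbb{S}^n$ let $\varphi_\Omega(z)$ be the signed distance from the origin to the smallest horosphere centred at $z$ that encloses $\Omega$. By definition, $\Omega$ is the intersection of these horoballs. As in \cite{AndrewsChenWei2021}, one can use the substitution $u=e^{\varphi}$, or the associated tensor
\[
A[\varphi]=\nabla^2\varphi-\tfrac12|\nabla\varphi|^2\sigma+\tfrac12(1-e^{-2\varphi})\sigma ,
\]
which is conformally covariant. The key structural fact is that a function $\varphi$ on $\mathbb{S}^n$ is the horospherical support function of an h-convex body if and only if $A[\varphi]\ge 0$ in the distributional sense. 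Smooth strictly h-convex bodies correspond exactly to smooth $\varphi$ with $A[\varphi]>0$. In the nonsmooth case I would first establish this correspondence: $\varphi_\Omega$ is Lipschitz (even semiconvex), and $A[\varphi_\Omega]$ is a nonnegative measure-valued tensor.

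The construction of $\Omega_i$ then proceeds in two steps: regularize, and then push strictly inside or outside. Rather than convolving $\varphi$ directly, whose nonlinearity interacts badly with averaging, I would use the horoball description, which is invariant under isometries. For a smooth mollifier $\rho_\varepsilon$ on $SO(n+1)$ concentrated near the identity, set
\[
\Omega^\varepsilon=\Big\{x:\ \int_{SO(n+1)} \rho_\varepsilon(g)\, e^{\,\cdots}\Big\}.
\]
More concretely, I would average the Busemann-type quantity $e^{\varphi}$. Here $e^{\varphi(z)}$ is a support-type function of a convex body in the light cone model: h-convexity of $\Omega$ corresponds to convexity of a body in Minkowski space, whose support function in the null direction $(z,1)$ is $e^{\varphi(z)}$. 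Linear averaging over rotations fixing the origin preserves this Minkowski convexity, because the averaged support function is a Minkowski combination of rotated bodies. The resulting body is therefore still h-convex, and its support function $e^{\varphi}*\rho_\varepsilon$ is smooth. As $\varepsilon\to0$ it converges uniformly to $e^{\varphi}$, which gives Hausdorff convergence.

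To obtain strict h-convexity together with monotone nesting, I would next take outer parallel bodies: $\Omega_i=(\Omega^{\varepsilon_i})_{r_i}$, with horospherical support $\varphi^{\varepsilon_i}+r_i$. Parallel bodies of h-convex bodies are strictly h-convex, with principal curvatures at least $\coth r_i>1$. Their support function stays smooth, since parallel hypersurfaces of smooth h-convex hypersurfaces are smooth. The main obstacle is the monotonicity $\Omega\subseteq\Omega_{i+1}\subseteq\Omega_i$. I would achieve it by comparing support functions. Since $\varphi$ is Lipschitz with constant $L$, we have $|\varphi^{\varepsilon}-\varphi|\le \delta(\varepsilon)\to0$ uniformly. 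I would choose $r_i$ decreasing with
\[
r_i>\delta(\varepsilon_i)+\delta(\varepsilon_{i+1})+r_{i+1},
\]
for example $r_i=2^{-i}$, with $\varepsilon_i$ chosen so that $\delta(\varepsilon_i)<2^{-i-2}$. Then $\varphi\le\varphi^{\varepsilon_{i+1}}+r_{i+1}\le\varphi^{\varepsilon_i}+r_i$. Since inclusion of h-convex bodies is equivalent to pointwise ordering of their horospherical support functions, this yields the nested sequence, and Hausdorff convergence follows from $\|\varphi_{\Omega_i}-\varphi\|_\infty\to0$.

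For the inner approximation, I would assume $\Omega$ has interior, as an h-convex body does. After a translation I would place the origin at an interior point. The idea is to shrink by inner parallel sets, $\varphi^{\varepsilon}-r$, but inner parallel sets may fail to be strictly h-convex. I would therefore instead use the outer construction applied to the inner parallel body $\Omega_{-s}$, which is h-convex as an intersection of shrunken horoballs, with $s>r+\delta$. Choosing sequences $s_i\downarrow0$ and $r_i\downarrow0$ as before gives $\Omega_i\subseteq\Omega_{i+1}\subseteq\Omega$. This step, namely checking that $\Omega_{-s}$ is h-convex with nonempty interior and Hausdorff-continuous as $s\to0$, is where I expect the main technical care to be needed.
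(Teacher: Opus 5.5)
The construction you end up with is essentially the paper's. You mollify $e^{u}$ (your $e^{\varphi}$) over $SO(n+1)$, shift to make $A$ strictly positive, invoke the Andrews--Chen--Wei correspondence, read off Hausdorff convergence from uniform convergence of support functions, and use inner parallel bodies for the inner approximation. Your explicit support-function bookkeeping for the nesting is more careful than the paper's.

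However, the step that is supposed to give smoothness and strict h-convexity is argued incorrectly. Parallel bodies of h-convex bodies are \emph{not} strictly h-convex with curvatures at least $\coth r$. By \eqref{waipingxingqvlv}, the map $k\mapsto\frac{\sinh r+k\cosh r}{\cosh r+k\sinh r}$ is increasing and sends $[1,\infty]$ onto $[1,\coth r]$. So curvature becomes bounded \emph{above} by $\coth r$, and pieces with $k=1$ survive: the parallel body of a horoball is a horoball.

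Nor can you say the parallel hypersurface is smooth because the averaged body is. The averaged body need not have smooth boundary, since $A[\psi]$, with $\psi=e^{u}*\rho_\varepsilon$, may degenerate, which means infinite curvature. The roles are reversed. Heuristically, the eigenvalues of $A$ are $e^{-u}(\kappa_i-1)^{-1}$, so smoothness of $\psi$ rules out $\kappa_i=1$, while the shift rules out $\kappa_i=\infty$ via
\[
A[e^{r}\psi]=e^{r}A[\psi]+\frac{e^{r}-e^{-r}}{2\psi}\,\overline g>0\quad\text{whenever }A[\psi]\ge 0 .
\]
Only then does the ACW correspondence give a smooth strictly h-convex body with horospherical support function $\log\psi+r$. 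This is exactly the paper's step $A(\varphi^{m,\varepsilon}_\Omega+\eta)>0$.

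The other step that needs an actual argument is ``the resulting body is therefore still h-convex''. Ambient convexity carries no information here: for any compact $U\subset\mathbb{H}^{n+1}$, $\mathrm{conv}(U)$ has the same null support function as $U$. What makes your idea work is the following chain.
\begin{enumerate}
\item The Minkowski average $K'$ of the rotated copies of $\mathrm{conv}(\Omega)$ lies in the convex, rotation-invariant region $\{\langle X,X\rangle_L\le -1,\ X_{n+2}>0\}$.
\item A point $Y=\lambda\hat Y$ there, with $\hat Y\in\mathbb{H}^{n+1}$ and $\lambda\ge 1$, gives $-\langle Y,(z,1)\rangle_L=\lambda f_{\hat Y}(z)$. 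This is $e^{u}$ of the ball $B(\hat Y,\log\lambda)$, and by \eqref{singlepoint} it satisfies $A[\lambda f_{\hat Y}]=\frac{\lambda-\lambda^{-1}}{2f_{\hat Y}}\overline g\ge 0$.
\item Hence $\psi=\sup_{Y\in K'}\lambda_Y f_{\hat Y}$. Since $\psi$ is smooth, touching from below at each point (as in the paper's proof that $A[\varphi_\Omega]\ge 0$) gives $A[\psi]\ge 0$ pointwise.
\end{enumerate}
That is all the shift step needs. With this supplement, your route is a genuinely more elementary substitute for Theorem \ref{positiveofconvolution}, which uses concavity of $A$, Jensen, and integration by parts against the measure-valued Hessian. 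Its stronger conclusion $A[f_m*\varphi]\ge f_m*A[\varphi]$ is not needed for this lemma. Do not instead cite your ``key structural fact'' in the nonsmooth direction ($A\ge 0$ implies an h-convex body): in the paper that characterization is derived from this very lemma, so the argument would be circular.

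Two minor points. First, your formula for $A$ in terms of $u$ omits $du\otimes du$. The correct expression is $e^{-u}A(e^{u})=\overline\nabla^2u+du\otimes du-\frac12|\overline\nabla u|^2\overline g+\frac12(1-e^{-2u})\overline g$; without that term, small balls centred away from the origin violate $A\ge 0$. Second, for $\Omega_i\subseteq\Omega_{i+1}$ in the inner approximation you need the gap $u_{\Omega_{-s'}}\ge u_{\Omega_{-s}}+(s-s')$ for $s'<s$. This follows from $(\Omega_{-s})^{s-s'}\subseteq\Omega_{-s'}$.
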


 By the smooth approximation lemma above together with calculations of Li and Xu \cite{LiXu2022}, we derive representations of the $0$-th order area measure and the first order area measure, in the spirit of Berg's work \cite{Berg1969}. Then the Theorem \ref{arearigidity} is actually a measure valued equation on $\mathbb{S}^n$. 
 
 To reduce the Theorem \ref{metricball} to the Theorem \ref{arearigidity}, a difficulty in using the modified generalized Minkowski formulas \cite{Zhang2026} and the Newton-Maclaurin inequality is that the generalized principal curvatures of nonsmooth h-convex bodies may equal  $\infty$. To overcome this difficulty, under the assumption \eqref{conditionsofrigisupp}, we consider the outer parallel sets and approximate their $C^{1,1}$ boundaries by smooth strictly h-convex hypersurfaces in the sense of $C^{1,\alpha}\cap W^{2,k+1}$, by which we can construct a sequence of smooth strictly h-convex approximation hypersurfaces whose total variation norms of the signed measure associated with the proportionality relation tend to $0$. By argument above, we express the measure relation \eqref{conditionsofrigisupp} on the nonsmooth h-convex body by the curvature condition of smooth strictly h-convex hypersurfaces. 
 
Then we evolve these smooth hypersurfaces along the shifted curvature flow \eqref{smoothflow1} and establish the time dependent estimates on the curvature quotients $\frac{\tilde{E}_{k+1}}{\tilde{E}_k}$ and $\mathcal{L}^1$ norm of $D=\tilde{E}_{k+1}-\phi(t)\tilde{E}_k$ by the initial $\mathcal{L}^1$ norms, uniformly along these hypersurfaces and independently of the curvature bounds. The uniform estimates above, which are independent of initial curvature bounds, allow us to estimate the $\mathcal{L}^1$ norms of $D$ on the evolving approximating hypersurfaces at any fixed positive time. Then the uniform bounds on the curvature quotients $\frac{\tilde{E}_{k+1}}{\tilde{E}_k}$ allow us to apply the modified generalized Minkowski formulas and the Newton-Maclaurin inequality to choose a sequence of smooth h-convex approximation hypersurfaces to reduce the proportionality relation \eqref{conditionsofrigisupp} to that of lower order. The argument above allows us to express the information of  modified support measures of an h-convex body by the curvature quantities on a sequence of  suitable smooth h-convex hypersurfaces which approximate its boundary in the Hausdorff sense.

We note that the same approach above also yields analogous rigidity results for the support measures of convex bodies in Euclidean space and h-convex bodies in hyperbolic space. One can use the global constrained flows introduced by McCoy \cite{McCoy2005} and  Wang and Xia \cite{WangXia2014} to obtain the rigidity results for support measures in Euclidean space and hyperbolic space, respectively.

The curvature flows with nonsmooth initial hypersurfaces were studied in several works. 
Bellettini, Caselles, Chambolle and Novaga \cite{BCCN2009} considered the volume preserving crystalline mean curvature flows. Moreover, they considered the stability with respect to the Hausdorff distance of initial hypersurfaces for volume preserving anisotropic mean curvature flows. Andrews, McCoy and Zheng \cite{Andrews2013} considered the contracting curvature flows for general compact convex hypersurfaces in Euclidean space. Choi and Hung \cite{ChoiHung2023} considered the inverse mean curvature flows for general compact convex hypersurfaces in Euclidean space.

In this paper, we consider the modified quermassintegral preserving flows with general h-convex initial hypersurfaces. For any h-convex body $\Omega$, by Lemma \ref{smoothapproxiamate}, there is a sequence of smooth strictly h-convex bodies $\Omega_j$ such that $\Omega_j\rightarrow \Omega$ in the Hausdorff sense. Consider a sequence of flows
\begin{align*}
\left\{\begin{aligned} 
\frac{\partial}{\partial t} X_j(x, t) & =\left(\phi(t)-\frac{\tilde{E}_{k+1}}{\tilde{E}_k}(W)\right) \nu(x, t), \\ 
X_j(\cdot,0) & =X_{j,0}(\cdot),\end{aligned}\right.
\end{align*}
where $X_{j,0}:N^n\rightarrow M_j=\partial \Omega_j$, $\nu(x, t)$ is the unit outer normal of the hypersurface $M_{j,t}:=X_j(N^n,t)=\partial \Omega_{j,t}$, $W$ is the Weingarten matrix of $M_{j,t}$, $\tilde{E}_k(x,t):=E_k(\mathcal{W}-I)$ is the shifted  $k$-th mean curvature of $M_{j,t}$ and  \begin{align}
    \phi(t):=\frac{\int_{M_{j,t}}\tilde{E}_{k+1}(\mathcal{W}) \mathrm{d} \mu }{\int_{M_{j,t}} \tilde{E}_k(W)d\mu} \end{align} is the global term chosen to preserve the $k$-th modified quermassintegral of the enclosed domains. We define the flow \begin{align}
    M_t=\partial\Omega_t:=\lim_{j\rightarrow\infty}M_{j,t}, t\geq 0.\label{flow}
    \end{align}   
We prove the following theorem. 
\begin{thm}\label{weakflowthm}
The flow \eqref{flow} is well defined, exists for all time and converges to a geodesic ball. For any $0\leq t<s$, we have $$d_{\mathbb{H}^{n+1}}(\Omega_t,\Omega_s)\leq C(\rho_-(\Omega),n,k)(s-t+\sqrt{s-t})$$. The flow \eqref{flow} preserves h-convexity and the $k$-th modified Quermassintegral, and  the $(k+1)$-th modified Quermassintegral is non-increasing.  
Moreover, the $(k+1)$-th modified Quermassintegral strictly decreases in a short time if $\Omega$ is not a geodesic ball.
\end{thm}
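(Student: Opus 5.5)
We construct the weak flow as a limit of the smooth flows of Andrews--Chen--Wei and obtain every property from estimates that do not depend on the curvature bounds of the initial data.

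\textbf{Step 1: uniform estimates.} By Lemma \ref{smoothapproxiamate}, choose smooth strictly h-convex $\Omega_j\to\Omega$ in the Hausdorff sense. Each flow $M_{j,t}$ exists for all time, stays strictly h-convex, preserves $\tilde W_k$, and has $\tilde W_{k+1}$ non-increasing; this is the smooth theory of \cite{AndrewsChenWei2021}. We then aim for estimates that depend only on $n$, $k$ and the inradius/outradius of $\Omega$, and hence are uniform in $j$.
\begin{enumerate}[label=(\roman*)]
\item Since $\tilde W_k$ is preserved and $\rho_\pm(\Omega_j)\to\rho_\pm(\Omega)$, the Alexandrov-reflection or horospherical support function argument of \cite{AndrewsChenWei2021} bounds the in- and outradii of $\Omega_{j,t}$.
\item The global term $\phi(t)$ is bounded, because h-convexity together with the radius bounds controls $\int\tilde E_{k+1}/\int\tilde E_k$ via Minkowski-type formulas.
\item The speed $F=\tilde E_{k+1}/\tilde E_k$ satisfies a time-weighted bound $F\le C(1+t^{-1/2})$, obtained by a maximum principle applied to $tF$ (or to $F/(u-\varepsilon)$ with $u$ a support-type function), in the spirit of \cite{Andrews2013}.
\end{enumerate}
Integrating the normal speed then gives $d(\Omega_{j,t},\Omega_{j,s})\le C\int_t^s(1+\tau^{-1/2})\,d\tau\le C(s-t+\sqrt{s-t})$, uniformly in $j$.

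\textbf{Step 2: well-definedness and basic properties.} The estimate in Step 1, combined with Blaschke selection for h-convex bodies, gives subsequential Hausdorff limits, locally uniformly in $t$. To show the limit is independent of the approximating sequence, we use the nesting in Lemma \ref{smoothapproxiamate} together with comparison. Alternatively, we show stability of the smooth flows with respect to Hausdorff distance of the initial data by comparing horospherical support functions, which satisfy a parabolic equation on $\mathbb S^n$ whose linearization is uniformly parabolic on $[\delta,T]$ by the curvature bounds valid for $t\ge\delta$. Once the limit is well defined:
\begin{itemize}
\item h-convexity passes to Hausdorff limits;
\item $\tilde W_k$ and $\tilde W_{k+1}$ are Hausdorff continuous, by the weak continuity of the modified support measures and the modified Steiner formula, so $\tilde W_k(\Omega_t)=\tilde W_k(\Omega)$ and $t\mapsto\tilde W_{k+1}(\Omega_t)$ is non-increasing.
\end{itemize}
For convergence to a ball, for $t\ge 1$ the bodies $\Omega_{j,t}$ satisfy $C^\infty$ bounds independent of $j$, so the exponential convergence of \cite{AndrewsChenWei2021} holds with uniform constants. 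This yields convergence to the geodesic ball with the same $\tilde W_k$.

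\textbf{Step 3: strict decrease (the main obstacle).} Along the smooth flows,
$$\frac{d}{dt}\tilde W_{k+1}(\Omega_{j,t})=-c_{n,k}\int_{M_{j,t}}\Big(\frac{\tilde E_{k+1}}{\tilde E_k}-\phi\Big)\big(\tilde E_{k+1}-\phi\tilde E_k\big)\,d\mu\le 0 .$$
Suppose $\tilde W_{k+1}(\Omega_t)=\tilde W_{k+1}(\Omega)$ on some interval $[0,t_0]$. The aim is to force \eqref{conditionsofrigisupp} with $c=\phi$ in the limit. The steps are:
\begin{enumerate}[label=(\alph*)]
\item Integrating the identity above, the time integrals of the dissipation tend to $0$ as $j\to\infty$.
\item Using the upper bound on $F$ from Step 1(iii) and the lower bound $\tilde E_k>0$ from h-convexity, the dissipation controls $\int|\tilde E_{k+1}-\phi\tilde E_k|\,d\mu$ via Cauchy--Schwarz.
\item Hence, for a.e.\ $t\in(0,t_0)$, the total variation norm of $\tilde\Phi_{n-k-1}(\Omega_{j,t},\cdot)-\phi_j(t)\tilde\Phi_{n-k}(\Omega_{j,t},\cdot)$ tends to $0$ along a subsequence.
\item Weak continuity of the support measures under Hausdorff convergence then gives \eqref{conditionsofrigisupp} for $\Omega_t$, with $\phi_j(t)\to c>0$.
\end{enumerate}
Theorem \ref{metricball} then says $\Omega_t$ is a geodesic ball for a.e.\ small $t$. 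Geodesic balls are stationary for the flow, so we must transfer this back to $t=0$. Letting $t\to0$ and using the Hausdorff continuity from Step 1 shows $\Omega$ is a ball, which contradicts the hypothesis and gives strict decrease for small time. The delicate parts are making (b) uniform as $t\to0$ despite the $t^{-1/2}$ blow-up, and identifying the limit of the normalized measures in (c)--(d). We would handle these by working on $[\delta,t_0]$ and then sending $\delta\to0$.
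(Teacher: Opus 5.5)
Your Step 3 is correct and differs from the paper's argument, but Step 2 has two genuine gaps.

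\textbf{Convergence to a ball.} This rests on the claim that for $t\ge 1$ the bodies $\Omega_{j,t}$ satisfy $C^\infty$ bounds independent of $j$. Nothing available gives this. The only curvature estimate independent of the initial data is the one-sided bound $F\le C(1+t^{-1/2})$. The lower bounds on the shifted principal curvatures in the smooth theory of \cite{AndrewsChenWei2021} depend on the initial curvature, and without them the equation is not uniformly parabolic. For $k\ge1$ the speed $\tilde E_{k+1}/\tilde E_k$ genuinely degenerates where shifted curvatures vanish, so horospherical pieces of $\partial\Omega$ need not be smoothed out uniformly in $j$. The paper needs no regularity here. Alexandrov reflection bounds $\{\Omega_t\}$, and Blaschke selection gives $\Omega_{t_i}\to\Omega_\infty$. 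Since $\tilde W_{k+1}(\Omega_t)$ is monotone, the rigidity-driven dissipation lower bound applied near $\Omega_\infty$ forces $\Omega_\infty$ to be a ball. The reflection estimates of \cite{WeiYangZhou2024} then control the inner radius and the center, which upgrades subsequential convergence to full convergence. Your Step 3 reasoning could identify subsequential limits as balls, but you would still need this center control, since $\tilde W_k$ fixes the limit ball only up to isometry.

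\textbf{Well-definedness.} Nesting plus comparison does not work because the flow is nonlocal. Nested bodies have different global terms $\phi_1(t)\ne\phi_2(t)$, and inclusion need not be preserved, as for volume-preserving flows. Your alternative invokes uniform parabolicity from curvature bounds on $[\delta,T]$, which are unavailable for the reason above. In any case the comparison must start at $t=0$, where the data are only Hausdorff close. The missing mechanism is Lemma \ref{stabilityofflow1}:
\begin{enumerate}
\item Compare $\varphi_1$ with the rescaled function $\varphi_{2,\lambda}(x,t)=\lambda\varphi_2(x,t/\lambda^2)+1-\lambda$.
\item Since $A[\varphi_{2,\lambda}]\ge\lambda A[\varphi_2]$ and $F$ is $1$-homogeneous, $\varphi_{2,\lambda}$ satisfies a one-sided differential inequality. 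Only the maximum principle at a spatial minimum of $\log\varphi_1-\log\varphi_{2,\lambda}$ is then needed.
\item The mismatch of global terms is absorbed by $|\phi_1-\phi_2|\le C\,d_{\mathbb{H}^{n+1}}$ (Lemma \ref{estimateglobal}), and the time-rescaling error by $F\le C(1+t^{-1/2})$.
\end{enumerate}
This gives stability on $[0,T_0(r_1,r_2))$, which is then iterated in Lemma \ref{stabilityofflow}.

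\textbf{Step 3 is fine.} The bound $\left(\int|D|\right)^2\le\int\tilde E_k\cdot\int\tilde E_k(F-\phi)^2$ is pure Cauchy--Schwarz and needs no bound on $F$, so your $t^{-1/2}$ worry is moot. Arguing by contradiction at a.e.\ positive time and letting $t\to0$ yields $\tilde W_{k+1}(\Omega_t)<\tilde W_{k+1}(\Omega)$ for all $t>0$, which suffices for the equality case. The paper instead applies Theorem \ref{metricball} to $\Omega$ itself and picks a continuous test function witnessing non-proportionality. Weak continuity then gives the quantitative rate $\tilde W_{k+1}(\Omega_t)\le\tilde W_{k+1}(\Omega)-Ct$ on $[0,T]$. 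The paper reuses this direct form to identify $\Omega_\infty$.
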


Theorem \ref{metricball} is used to obtain the strict monotonicity of $\tilde{W}_{k+1}$ along the flow \eqref{flow} when the initial h-convex body is not a geodesic ball and to characterize the limit shape. 

There are several works that apply curvature measures to characterize the limit shape of curvature flows, see \cites{ALWX25,AndrewsWei2021, WeiYangZhou2024, WeiYangZhou2025, Zhang2026}. More precisely, they derived the curvature measure identities for limit shapes of the flows by analyzing the $\mathcal{L}^2$ or $\mathcal{L}^1$ oscillation decay of curvatures.  In this paper, once we have established the rigidity theorem for modified  support measures, by the weak continuity of the modified support measures with respect to Hausdorff distance of h-convex bodies, this rigidity theorem gives a lower bound on the rate of decrease of $\tilde{W}_{k+1}(\Omega_t)$ when $\Omega$ is not a geodesic ball.
For any h-convex body $\Omega$ and $0\leq l\leq k\leq n-1$, the equality  $$\tilde W_{k+1}(\Omega)=\tilde{f}_{k+1}\circ\tilde{f}_l^{-1}(\tilde{W}_l(\Omega))$$ implies $$\tilde W_{k+1}(\Omega)=\tilde{f}_{k+1}\circ\tilde{f}_k^{-1}(\tilde{W}_k(\Omega)).$$
 Hence, by Theorem \ref{weakflowthm}, we immediately obtain the equality rigidity of the modified Alexandrov-Fenchel inequality for h-convex bodies. 
 
For the convergence of the flow \eqref{flow}, as in \cite{ALWX25,AndrewsWei2021, WeiYangZhou2024, WeiYangZhou2025, Zhang2026}, we first consider the subsequential convergence and apply Theorem \ref{metricball} to characterize the limit shape.  Then applying the  Alexandrov reflection arguments in \cite{AndrewsChenWei2021, WeiYangZhou2024, WeiYangZhou2025} we obtain the convergence. 

For the Alexandrov-Fenchel inequality, we establish the Hausdorff stability independent of curvature bounds. 
 The Fraenkel asymmetry stability described by the deficit  $W_1(\Omega)-f_1\circ f_{0}^{-1}(W_0(\Omega))$ in hyperbolic space was established by    B\"ogelein, Duzaar and Scheven \cite{Verena2015} for sets of finite perimeter. Moreover, they obtained the isoperimetric inequality in hyperbolic space.  There are several works on the stability described by the higher order Alexandrov-Fenchel deficit. Sahjwani and Scheuer \cite{SahjwaniScheuer2024} obtained the stability of the Alexandrov-Fenchel deficit for smooth strictly h-convex domains by the curvature flows. Moreover, Gao and Zhou \cite{Gao2024} obtained the stability of the modified Alexandrov-Fenchel deficit. The stability results in \cite{SahjwaniScheuer2024, Gao2024} need the information of the curvature quotients, which depend on the curvature bound. In our case, the boundary of a nonsmooth h-convex domain may have singular points, for which the stability results of Sahjwani and Scheuer \cite{SahjwaniScheuer2024} above cannot imply our result directly.  
Let $D_{k,l}(\Omega)=W_k(\Omega)-f_k(f^{-1}_l(W_l(\Omega)))$ and $B(\Omega)=\{B: B\,\, is\,\, a\,\, metric \,\,ball \,\,and \,\,W_0(B)=W_0(\Omega)\}$. As in \cite{Verena2015}, denote $$\alpha(\Omega)=\min\limits_{B\in B(\Omega)}\frac{|\Omega\Delta B|}{|B|}.$$
We have the following theorem.
\begin{thm}\label{stability}
    Suppose that $\Omega\in\mathcal{H}(\mathbb{H}^{n+1})$ and $\frac{\rho}{2}\leq \mathrm{Vol}(\Omega)\leq \rho$. Then we have 
    $$\alpha^2(\Omega)\leq C(n,k,l,\rho)D_{k,l}(\Omega), 0\leq l<k\leq n.$$
\end{thm}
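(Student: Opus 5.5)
The plan is to reduce to the isoperimetric case, where the stability is already known, and then transfer it to the higher order deficits through monotonicity along a quermassintegral preserving flow.

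\textbf{Step 1: the case $l=0$, $k=1$.} Here $D_{1,0}$ is the isoperimetric deficit. The Fraenkel asymmetry stability of B\"ogelein, Duzaar and Scheven \cite{Verena2015} for sets of finite perimeter gives
$$\alpha^2(\Omega)\leq C(n,\rho)D_{1,0}(\Omega)$$
under the volume bound $\frac{\rho}{2}\le \mathrm{Vol}(\Omega)\le\rho$. Every h-convex body is such a set, so this case needs no curvature information.

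\textbf{Step 2: reduce $D_{k,l}$ to $D_{1,0}$.} I aim to prove
$$D_{k,l}(\Omega)\ \ge\ c(n,k,l,\rho)\,D_{1,0}(\Omega)$$
for all h-convex bodies with the volume bound.

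First I would do this for smooth strictly h-convex bodies. I run the Wang--Xia flow \cite{WangXia2014}, which preserves $W_l$ and is $1$-homogeneous, so it needs no curvature quotients. Along it $W_k$ decreases and the body converges to the ball $B_l$ with $W_l(B_l)=W_l(\Omega)$. Hence
$$D_{k,l}(\Omega)=\int_0^\infty\Big(-\frac{d}{dt}W_k(\Omega_t)\Big)dt .$$

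Next I would compare this with the behaviour of $W_1$ and $W_0$ along the same flow. The cleanest route is a chain through adjacent indices, using the monotone structure
$$D_{k,l}\ \ge\ C\,D_{k-1,l}\ \ge\ \dots,\qquad D_{k,l}\ \gtrsim\ D_{l+1,l},\qquad D_{l+1,l}\ \gtrsim\ D_{1,0}.$$

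The first kind of comparison should follow from the elementary fact that $f_k\circ f_{l}^{-1}$ is a composition of increasing functions. Writing $W_k-f_k f_l^{-1}(W_l)$ as a telescoping sum,
$$\big[W_k-f_kf_{k-1}^{-1}(W_{k-1})\big]+\big[f_kf_{k-1}^{-1}(W_{k-1})-f_kf_{k-1}^{-1}\big(f_{k-1}f_l^{-1}(W_l)\big)\big],$$
each bracket is nonnegative by the Alexandrov--Fenchel inequality \eqref{alexandrov}. The derivative of $f_kf_{k-1}^{-1}$ is bounded below by a constant depending on $\rho$, since all bodies here have inradius and outer radius controlled by the volume bound for h-convex bodies. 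Iterating down gives $D_{k,l}\ge c\,D_{l+1,l}$, and also $D_{k,l}\ge c\,D_{k,0}$ type bounds for lowering $l$.

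\textbf{Main obstacle.} The hardest step is passing from the index pair $(l+1,l)$ down to $(1,0)$, that is, lowering both indices. The telescoping only controls $D_{l,0}$ from the pair $(k,0)$. I would first try to show $D_{k,l}\ge c\,D_{k,0}$ by a flow argument preserving $W_0$ (volume). Along the flow $D_{k,0}$ decreases, and by Step 2's telescoping one has $D_{k,0}\ge c\,D_{1,0}$ directly, since $D_{k,0}$ telescopes through $D_{1,0}$. Then $D_{k,l}\ge c D_{k,0}$ would be obtained by a comparison between $W_l$-preserving and $W_0$-preserving normalizations, controlled via the monotone function $f_l f_0^{-1}$ and the inequality $W_l\ge f_lf_0^{-1}(W_0)$ with derivative bounds depending on $\rho$. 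The concern is the sign in this comparison; if it fails, I would instead use the Sahjwani--Scheuer flow estimates \cite{SahjwaniScheuer2024} applied to parallel bodies, where curvature bounds are uniform.

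\textbf{Step 3: nonsmooth bodies.} I would pass to general h-convex bodies by Lemma \ref{smoothapproxiamate}. The quermassintegrals, the volume, and $|\Omega\Delta B|$ are all continuous under Hausdorff convergence of h-convex bodies, and every constant above depends only on $n,k,l,\rho$. The inequality therefore survives the limit.
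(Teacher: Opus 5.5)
Your Steps 1 and 3 are fine. The telescoping in Step 2 correctly gives $D_{k,l}\ge D_{k,k-1}$ and $D_{k,l}\ge c\,D_{k-1,l}$, hence $D_{k,l}\gtrsim D_{l+1,l}$. But the step you flag as the main obstacle is the whole content of the theorem when $l\ge 1$, and nothing in your proposal closes it.

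The same telescoping with the lower index fixed reads $D_{k,0}=D_{k,l}+\bigl[f_kf_l^{-1}(W_l)-f_kf_0^{-1}(W_0)\bigr]$, where the bracket is nonnegative and comparable to $D_{l,0}$. It therefore only yields $D_{k,0}\ge D_{k,l}$, which is the wrong direction, so deriving $D_{k,l}\gtrsim D_{k,0}$ from it is circular.

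More fundamentally, no argument that uses only the classical inequalities $W_k\ge f_k\circ f_l^{-1}(W_l)$ can work. With $r_i=f_i^{-1}(W_i)$, these inequalities say exactly $r_0\le r_1\le\dots\le r_n$. The configuration $r_0<r_1=\dots=r_n$ is compatible with them, yet it gives $D_{k,l}=0$ for $l\ge 1$ while $D_{1,0}>0$. A flow-based ``comparison of normalizations'' cannot escape this without new input. The Wang--Xia paragraph also produces nothing you later use.

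Your fallback to Sahjwani--Scheuer on parallel bodies also fails. Their constants depend on curvature bounds, and for $\partial\Omega^\varepsilon$ these are only of order $\coth\varepsilon$, so the constants degenerate as $\varepsilon\to 0$. That is exactly the dependence the theorem is meant to avoid.

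The missing ingredient is the modified Alexandrov--Fenchel inequality, combined with the binomial relations $W_k=\sum_{i=0}^k\binom{k}{i}\tilde W_i$ and $f_k=\sum_{i=0}^k\binom{k}{i}\tilde f_i$. This extra h-convex constraint is what rules out the configuration above. The paper's argument runs as follows.

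\begin{enumerate}
\item Set $\tilde r_i=\tilde f_i^{-1}(\tilde W_i)$, which is nondecreasing in $i$. The inequality part holds for all h-convex bodies by smooth approximation.
\item From $0=W_l-f_l(r_l)=\sum_{i\le l}\binom{l}{i}\bigl(\tilde f_i(\tilde r_i)-\tilde f_i(r_l)\bigr)$, there is a crossing index $1\le a\le l$ with $\tilde r_{a-1}\le r_l\le\tilde r_a$.
\item Write $D_{k,l}=\sum_{i\le k}\binom{k}{i}\bigl(\tilde W_i-\tilde f_i(r_l)\bigr)$ and drop the nonnegative terms with $i>l$. Since $\binom{k}{i}/\binom{l}{i}$ is strictly increasing in $i$, this gives $D_{k,l}\ge c(k,l)\bigl(\tilde f_0(r_l)-\tilde f_0(r_0)\bigr)$.
\item Since $D_{1,0}=\tilde D_{1,0}$, one gets $D_{1,0}\le\sum_i\binom{k}{i}\tilde D_{i,0}=D_{k,l}+\sum_i\binom{k}{i}\bigl(\tilde f_i(r_l)-\tilde f_i(r_0)\bigr)$. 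Each summand is at most $C(\rho)\bigl(\tilde f_0(r_l)-\tilde f_0(r_0)\bigr)$ by the explicit formulas for $\tilde f_i$, so $D_{1,0}\le C(n,k,l,\rho)D_{k,l}$.
\end{enumerate}

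This reduction is purely algebraic and needs no flows or smoothness. Combined with your Step 1, it finishes the proof.
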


As mentioned in \cite[Remark 1.10]{AndrewsChenWei2021}, the modified quermassintegrals give more precise details of h-convex geometry. By the inequality parts of the Alexandrov-Fenchel inequality and the modified Alexandrov-Fenchel inequality, we analyze $D_{k,l}$ in more detail. More precisely, by the modified Alexandrov-Fenchel inequality we can obtain the monotonicity of $\tilde{f}^{-1}_i(\tilde{W}_i(\Omega))$ with respect to $i$, from which we can  prove that $D_{1,0}(\Omega)\leq C(n,k,l,\rho)D_{k,l}(\Omega)$.

The theorem above implies the Hausdorff stability we need. More precisely, suppose that $\Omega$ is an h-convex body. We next apply the result in \cite{Groemer2000}, which controls the Hausdorff distance by the volume of their symmetric difference. Consider the Poincare ball model of $\mathbb{H}^{n+1}$ and take the center of the inner ball of $\Omega$ as the origin. The h-convexity of $\Omega$ under the hyperbolic metric  induces the convexity of $\Omega$ under the Euclidean metric.
Noticing that the hyperbolic metric and Euclidean metric can be controlled by each other in a compact domain, by the result in \cite{Groemer2000} we can control the Hausdorff distance between $\Omega$ and $B(\Omega)$ by $\alpha(\Omega)$, from which we obtain the following Hausdorff stability result.
\begin{thm}\label{hausdorffstability}
    Suppose that $\Omega\in\mathcal{H}(\mathbb{H}^{n+1})$ and $\frac{\rho}{2}\leq \mathrm{Vol}(\Omega)\leq \rho$. Then we have 
    $$\min\limits_{B\in B(\Omega)}\{d_{\mathbb{H}^{n+1}}(\Omega, B)\}\leq C(n,k,l,\rho)D^\frac{1}{2n+2}_{k,l}(\Omega), 0\leq l<k\leq n.$$
\end{thm}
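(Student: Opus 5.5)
Theorem \ref{hausdorffstability} will be derived from Theorem \ref{stability} by converting the volume-of-symmetric-difference control $\alpha(\Omega)$ into Hausdorff distance control, using the Euclidean convexity of $\Omega$ in a suitable Poincar\'e ball model and the result of Groemer \cite{Groemer2000}. The argument proceeds in four steps.

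\textbf{Step 1 (a priori geometry).} First I will obtain a priori geometric bounds from the volume constraint $\frac{\rho}{2}\le \mathrm{Vol}(\Omega)\le\rho$. For h-convex bodies, the inradius and outer radius are comparable: the circumradius is controlled by a function of the inradius (the standard h-convex estimate, as in \cite{AndrewsChenWei2021} or \cite{WeiYangZhou2024}). Combined with the volume bounds, this gives $c(n,\rho)\le \rho_-(\Omega)\le\rho_+(\Omega)\le C(n,\rho)$. Placing the center of an inball at the origin of the Poincar\'e ball model, $\Omega$ lies in a Euclidean ball of radius $R(n,\rho)<1$, and it contains a Euclidean ball of radius $r(n,\rho)>0$. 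Moreover, every $B\in B(\Omega)$ with $|\Omega\Delta B|<|B|$ must intersect $\Omega$ substantially. Hence either $\alpha(\Omega)$ is bounded below by a constant, and the claim is trivial since both sides are bounded, or the optimal ball $B$ is also contained in a fixed compact set $K(n,\rho)$. On $K$, the hyperbolic metric and the Euclidean metric, as well as the hyperbolic and Euclidean volumes, are uniformly comparable.

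\textbf{Step 2 (Euclidean convexity).} Horoballs in the Poincar\'e model are Euclidean balls tangent to the boundary sphere, so they are Euclidean convex. Hence $\Omega$, as an intersection of horoballs, is Euclidean convex. Geodesic balls are also Euclidean balls. Therefore $\Omega$ and $B$ are Euclidean convex bodies in $K$, with inradii bounded below and diameters bounded above.

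\textbf{Step 3 (Groemer).} I will apply Groemer's inequality \cite{Groemer2000}: for convex bodies $K_1,K_2\subset\mathbb{R}^{n+1}$ of bounded diameter,
\[
d_H^{\mathrm{euc}}(K_1,K_2)\le C(n,\mathrm{diam})\,|K_1\Delta K_2|_{\mathrm{euc}}^{\frac{1}{n+1}}.
\]
By the comparability in Step 1, this yields
\[
d_{\mathbb{H}^{n+1}}(\Omega,B)\le C(n,\rho)\,\alpha(\Omega)^{\frac{1}{n+1}},
\]
using that $|B|$ is bounded above and below in terms of $\rho$.

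\textbf{Step 4 (combine).} Theorem \ref{stability} gives $\alpha(\Omega)\le C\,D_{k,l}(\Omega)^{1/2}$, so
\[
\min_{B\in B(\Omega)} d_{\mathbb{H}^{n+1}}(\Omega,B)\le C(n,k,l,\rho)\,D_{k,l}(\Omega)^{\frac{1}{2n+2}}.
\]

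The main obstacle is Step 1: making sure the minimizing ball $B$ and $\Omega$ sit in a common compact region with uniform constants, which requires the inradius–circumradius control for h-convex bodies. I would handle the large-$\alpha$ case separately, as described in Step 1, so that uniform metric comparability is needed only when the two bodies overlap substantially.
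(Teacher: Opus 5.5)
Your proposal is correct and follows essentially the same route as the paper. Both combine $\alpha^2(\Omega)\le C\,D_{k,l}(\Omega)$ from Theorem \ref{stability} with Groemer's estimate of Hausdorff distance by the volume of the symmetric difference, applied in the Poincar\'e ball model centered at an inball center, where $\Omega$ is Euclidean convex and the hyperbolic and Euclidean metrics and volumes are uniformly comparable. Your Step 1 (the a priori radius bounds from Theorem \ref{hconvex}, and the separate treatment of the case where $\alpha(\Omega)$ is bounded below) fills in details the paper only sketches. One correction: Groemer's constant also depends on a lower bound for the volume (or inradius) of the bodies, not only on the diameter, but your Step 2 already supplies that bound.
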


 \begin{rmk} 
   The smooth approximation Lemma \ref{smoothapproxiamate}  was also obtained in the third version of Chen and Li's preprint \cite{ChenLi2026}. Their construction uses log-sum-exp smoothing of Busemann functions of finitely many horoballs. Our approach is based on the relation between the horospherical support functions and the nonnegative Radon measure valued tensors. This approach also provides some properties of horospherical support functions and more details on the modified area measure. The proof of Lemma \ref{smoothapproxiamate} was completed before the author became aware of the approximation result in the third version of Chen and Li's preprint \cite{ChenLi2026}.
\end{rmk}

\subsection{Organization of this paper}
In Section \ref{preli} we introduce some preliminaries including the quermassintegrals, modified quermassintegrals, generalized principal curvatures, modified support measures and some geometric properties of h-convex bodies and related shifted curvature flows. Moreover, we introduce the convolution on $\mathbb{S}^n$. In Section \ref{smoothsection}, we prove Lemma \ref{smoothapproxiamate} and introduce some properties of the horospherical support function.  In Section \ref{stabilityforaf}, we derive the Fraenkel asymmetry stability for the Alexandrov-Fenchel inequality. In Section \ref{flowsection}, we give some calculations of the smooth curvature flows. In Section \ref{chraofgeoball}, we characterize geodesic balls via modified support measures and modified area measures. In Section \ref{af}, we prove Theorem \ref{weakflowthm}, which implies the modified Alexandrov-Fenchel inequality. 

\subsection{On the use of AI}
The main ideas and guiding proof strategy of this paper were inspired by the author's earlier work \cite{Zhang2026} and were further developed by the author in the present work. The main theorems in this paper were formulated by the author. The author used ChatGPT to assist with literature searches, checking and refining mathematical details, and to suggest improvements to the exposition and language. The author personally carried out all calculations included in the manuscript. The author takes full responsibility for all results, proofs, references and exposition in this work.

\section{Preliminaries}\label{preli}
  For any smooth closed hypersurface $M\subseteq\mathbb{H}^{n+1}$, denote by $g=\{g_{ij}\}$, $\mathcal{W}=\{h^j_i\}$ ,$\nabla$, $\overline{\nabla}$ the induced metric and Weingarten matrix, induced Levi-Civita connection of $M$ and  Levi-Civita connection of $\mathbb{H}^{n+1}$ respectively. We denote by $B(x,r)$ the geodesic ball centered at $x$ of radius $r$ in $\mathbb{H}^{n+1}$.
Denote by $\mathcal{H}(\mathbb{H}^{n+1})$ the collection of h-convex bodies in $\mathbb{H}^{n+1}$. Equip the tangent bundle $T\mathbb{H}^{n+1}$ with the Sasaki metric with respect to the metric on $\mathbb{H}^{n+1}$ denoted by $\langle \cdot, \cdot\rangle_{\mathbb{H}^{n+1}}$. Denote by $\Pi :T\mathbb{H}^{n+1}\rightarrow \mathbb{H}^{n+1}$ the canonical projection and by $\Gamma:TT\mathbb{H}^{n+1}\rightarrow T\mathbb{H}^{n+1}$ the connection map. For any $w\in T\mathbb{H}^{n+1}$ we identify $T_wT\mathbb{H}^{n+1}$ with $T_{\Pi(w)}\mathbb{H}^{n+1}\times T_{\Pi(w)}\mathbb{H}^{n+1}$ by the isometry $\Pi_*\times\Gamma$.
\subsection{Quermassintegrals for convex bodies in hyperbolic space}
In hyperbolic space, for any convex body $\Omega$ in $\mathbb{H}^{n+1}$, the quermassintegrals of $\Omega$ are defined as (see \cites{integralgeometrybook, integralgauss}) 
$$W_k(\Omega)=\frac{\omega_{k-1} \cdots \omega_0}{\omega_{n-1} \cdots \omega_{n-k}} \int_{\mathcal{L}_k} \chi\left(L_k \cap \Omega\right) d L_k, \quad k=1, \ldots, n,$$
where $\mathcal{L}_k$ is the space of $k$-dimensional totally geodesic subspaces in $\mathbb{H}^{n+1}$, $\omega_n$ denotes the area of the $n$-dimensional unit sphere in Euclidean space, and $\chi$ is the Euler characteristic.
 Furthermore, set $W_0(\Omega)=|\Omega|$, $W_{n+1}(\Omega)=\left|\mathbb{B}^{n+1}\right|=\frac{\omega_n}{n+1} .$
\subsection{Hyperbolic geometry and horospherical support functions}
In this section we consider the Minkowski space $(\mathbb{R}^{n+1,1},\langle,\rangle_L)$ and  identify the Hyperbolic space $$\mathbb{H}^{n+1}=\left\{ X\in\mathbb{R}^{n+1,1}: \langle X,X\rangle_L=-1\right\}$$ with the future time-like hyperboloid.
  Denote by $\mathcal{C}(\mathbb{H}^{n+1})$ the collection of compact sets in $\mathbb{H}^{n+1}$. It is known that a compact domain $\Omega$ is h-convex iff any one of the following conditions holds:

(1). For any $x\in\partial \Omega$, there exists a supporting horoball of $\Omega$ at $x$.

(2). For any $x,y\in \Omega$($x\neq y$), every horocycle segment from $x$ to $y$ lies in $\Omega$.

For any $\varepsilon>0$,   the outer and inner parallel sets of $\Omega\subset \mathbb{H}^{n+1}$ are defined by
$$\Omega^{\varepsilon}=\left\{x \in \mathbb{H}^{n+1} : d_{\mathbb{H}^{n+1}}(x, \Omega) \leq \varepsilon\right\},$$ 
$$\Omega_{\varepsilon}=\left\{x \in \mathbb{H}^{n+1} :x\in\Omega, d_{\mathbb{H}^{n+1}}(x, \partial\Omega) \geq \varepsilon\right\}.$$ 
In \cite{Zhang2026}, the author proved that any outer parallel set of an h-convex body is h-convex. 
\begin{lemma}[\cite{Zhang2026}]\label{outerhconvex}
    Suppose that $\Omega\in\mathcal{H}(\mathbb{H}^{n+1})$. Then for any $\varepsilon>0$, we have $\Omega^\varepsilon\in \mathcal{H}(\mathbb{H}^{n+1})$. 
\end{lemma}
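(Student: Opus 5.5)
To prove Lemma \ref{outerhconvex}, I will use characterization (1) of h-convexity: it suffices to find, for every boundary point $y\in\partial\Omega^\varepsilon$, a horoball containing $\Omega^\varepsilon$ with $y$ on its boundary. I will also use that $\Omega^\varepsilon$ is compact with nonempty interior (it contains $\Omega$), which is clear.

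\emph{Step 1: nearest point and supporting horoball of $\Omega$.} Fix $y\in\partial\Omega^\varepsilon$. Then $d_{\mathbb{H}^{n+1}}(y,\Omega)=\varepsilon$, and by compactness there is $x\in\partial\Omega$ with $d(x,y)=\varepsilon$. The geodesic ball $B(y,\varepsilon)$ meets $\Omega$ only on its boundary, and touches $\Omega$ at $x$. I claim the outward direction from $x$ toward $y$ is normal to a supporting horoball of $\Omega$ at $x$. To see this, take the supporting horoball $H$ at $x$ given by characterization (1). If $H$ is not tangent to the geodesic segment direction $\gamma'(0)$ (where $\gamma$ runs from $x$ to $y$), then $\gamma$ would enter $H$'s complement transversally. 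I then need to argue there is a supporting horoball whose outer normal at $x$ equals $\gamma'(0)$.

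The cleanest route is the standard fact that for an h-convex body, every outer normal in the normal cone at $x$ admits a supporting horoball: $\Omega$ is an intersection of horoballs, and the normal cone at $x$ is generated by the normals of the horoballs through $x$ in this family. A limiting/convexity argument on the ideal boundary $\mathbb{S}^n$ then gives one horoball per extreme normal. This is the step I expect to be the main obstacle. Alternatively, I can avoid it: since $B(y,\varepsilon)\cap \mathrm{int}\,\Omega=\emptyset$, the normal of $B(y,\varepsilon)$ at $x$ lies in the normal cone of $\Omega$. If necessary I will approximate $\Omega$ by the smooth strictly h-convex bodies of Lemma \ref{smoothapproxiamate} (shrinking from outside, $\Omega\subseteq\Omega_i$). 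For smooth bodies the supporting horoball in the outer normal direction is immediate from principal curvatures $\ge1$ via a comparison argument. I will then pass to the limit using compactness of horoballs meeting a fixed compact set.

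\emph{Step 2: parallel horoball.} Let $H$ be the horoball supporting $\Omega$ at $x$ with outer normal $\gamma'(0)$, with Busemann function $b$, so that $H=\{b\le b(x)\}$ and $\nabla b(x)=\gamma'(0)$. Since the geodesic through $x$ normal to the horosphere is an integral curve of $\nabla b$ and $|\nabla b|=1$, we get $b(y)=b(x)+\varepsilon$. The set $H^\varepsilon=\{b\le b(x)+\varepsilon\}$ is again a horoball; indeed, the distance to a horoball equals $b-b(x)$ outside it, because horospheres are equidistant level sets. Hence $\Omega^\varepsilon\subseteq H^\varepsilon$, and $y\in\partial H^\varepsilon$. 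So $H^\varepsilon$ is a supporting horoball of $\Omega^\varepsilon$ at $y$, and characterization (1) yields $\Omega^\varepsilon\in\mathcal{H}(\mathbb{H}^{n+1})$.

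An alternative global argument avoids normals entirely: write $\Omega=\bigcap_\alpha H_\alpha$ and show $\Omega^\varepsilon=\bigcap_\alpha H_\alpha^\varepsilon$. The inclusion $\subseteq$ is trivial. For the reverse inclusion, take $y\notin\Omega^\varepsilon$ and separate $B(y,\varepsilon)$ from $\Omega$ by some horoball $H_\alpha\supseteq\Omega$ avoiding $B(y,\varepsilon)$. This separation is again exactly the content of Step 1, so the nearest-point supporting horoball remains the key point.
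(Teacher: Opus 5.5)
Your Step 2 is correct. For the Busemann function $b$ of $H$ one has $d(\cdot,H)=b-b(x)$ outside $H$. Hence $H^\varepsilon=\{b\le b(x)+\varepsilon\}$ is a horoball containing $\Omega^\varepsilon$ with $y$ on its boundary, and reducing the lemma to Step 1 is the right move. The paper gives no proof of this lemma (it imports it from \cite{Zhang2026}), so I am judging your argument on its own.

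\textbf{The gap is Step 1.} You need that the horoball $H$ through the nearest point $x$ with outer normal $v=\gamma'(0)$ contains $\Omega$. This is never proved, and neither of your two routes works as written.
\begin{itemize}
\item In route (a) the difficulty is misplaced. Normals of horoballs in the defining family are harmless, since they come with their horoballs. The real issue is a normal strictly inside the cone they generate, for instance at a ridge of $H_1\cap H_2$. For such a normal you must show that the horoball with this intermediate normal still contains $\Omega$. You would also have to justify, for an infinite family, that the nearest-point direction lies in that cone at all.
\item Route (b) is circular in this paper. The outer approximation of Lemma \ref{smoothapproxiamate} is built as $f_m*\varphi_{\Omega^\varepsilon}$, and its proof explicitly invokes Lemma \ref{outerhconvex} to know that $\Omega^\varepsilon\in\mathcal{H}(\mathbb{H}^{n+1})$.
\end{itemize}

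Route (b) can be repaired by convolving $\varphi_\Omega$ itself. Set $\psi_m=f_m*\varphi_\Omega+\eta_m$ with $\eta_m>\|f_m*\varphi_\Omega-\varphi_\Omega\|_\infty$. By Theorem \ref{positiveofconvolution}, $A(\psi_m)>0$, and also $\psi_m\ge\varphi_\Omega$. So the associated smooth strictly h-convex bodies contain $\Omega$ and converge to it. These bodies are by construction intersections of the horoballs $\{X: f_X(\xi)\le\psi_m(\xi)\}$. Each such horoball touches the boundary at the point with horospherical Gauss image $\xi$, so the supporting horoball in the normal direction is built in. No comparison argument is needed; for general smooth h-convex bodies that would be a genuine local-to-global statement. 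Your limiting argument then goes through.

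A more direct way to close Step 1 uses characterization (2). Suppose $z\in\Omega\setminus H$.
\begin{itemize}
\item If $z$ lies on the geodesic through $x$ tangent to $v$, it must be on the $+v$ side. Then the segment $[x,z]\subseteq\Omega$ contains points at distance $<\varepsilon$ from $y$, a contradiction.
\item Otherwise let $P$ be the totally geodesic plane through $x$ spanned by $v$ and the direction to $z$. It contains $y$, and the center of $H$ lies in $\partial_\infty P$. Use the upper half-plane model of $P$ with coordinates $(s_1,s_2)$ and that center at $\infty$. Then $H\cap P=\{s_2\ge c\}$, $x=(0,c)$, $y=(0,ce^{-\varepsilon})$, and $z=(a,b)$ with $b<c$ and, say, $a>0$.
\item The horocycles of $P$ through $x$ and $z$ are the circles tangent to the real axis at $(p,0)$ with $(c-b)p^2-2acp+c(a^2+b^2-bc)=0$. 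The root $p_+=\frac{ac+\sqrt{cb(a^2+(c-b)^2)}}{c-b}$ exceeds $a$, so $x$ and $z$ lie on the same half of that circle.
\item The horocycle segment from $x$ to $z$, which lies in $\Omega$ by (2), therefore leaves $x$ strictly downward, i.e. with positive component along $v$. By first variation, $d(\cdot,y)$ strictly decreases along it, contradicting $d(y,\Omega)=\varepsilon$.
\end{itemize}
Hence $\Omega\subseteq H$, and your Step 2 completes the proof.
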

Moreover, the author proved a Blaschke selection type theorem for h-convex bodies.
\begin{thm}[\cite{Zhang2026}]
   For any bounded sequence $\{\Omega_i, i\in\mathbb{N}\}\subset \mathcal{H}(\mathbb{H}^{n+1})$ whose elements  have a uniform positive lower bound on their inner radius, there is a subsequence which converges to $\Omega\in \mathcal{H}(\mathbb{H}^{n+1})$.\label{hausdorffofhconvex}
\end{thm}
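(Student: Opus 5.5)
The plan is to combine the classical Blaschke selection theorem for compact sets with two closedness arguments. The first shows that the limit has nonempty interior. The second shows that the supporting horoballs of characterization (1) pass to the limit.

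\emph{Step 1 (compactness).} By assumption there is a geodesic ball $B(o,R)$ containing all $\Omega_i$. The family of nonempty compact subsets of the compact set $\overline{B(o,R)}$, equipped with the Hausdorff metric, is compact. This is the classical Blaschke selection principle, valid in any compact metric space. Hence a subsequence, still denoted $\Omega_i$, converges in the Hausdorff sense to a compact set $\Omega\subset\overline{B(o,R)}$.

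\emph{Step 2 (nonempty interior).} Let $r_0>0$ be the uniform lower bound on the inner radii, and choose balls $B(x_i,r_0)\subset\Omega_i$. The centers $x_i$ lie in $\overline{B(o,R)}$, so after passing to a further subsequence $x_i\to x$. For large $i$ we then have $B(x,r_0/2)\subset B(x_i,r_0)\subset\Omega_i$. Every point of $B(x,r_0/2)$ is therefore a limit of points of $\Omega_i$, and so it lies in $\Omega$, because a Hausdorff limit contains all such limit points. Thus $\Omega$ has nonempty interior.

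\emph{Step 3 (h-convexity via supporting horoballs).} We verify characterization (1) for $\Omega$. Fix $p\in\partial\Omega$.

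First we produce boundary points $p_i\in\partial\Omega_i$ with $p_i\to p$. Given $\varepsilon>0$, pick $q\notin\Omega$ with $d(q,p)<\varepsilon$, and set $\delta=d(q,\Omega)>0$. Hausdorff convergence gives, for large $i$, that $d(q,\Omega_i)>\delta/2$, so $q\notin\Omega_i$. It also gives a point $p'_i\in\Omega_i$ with $d(p'_i,p)<\varepsilon$. The geodesic segment from $p'_i$ to $q$ meets $\partial\Omega_i$ at a point within $2\varepsilon$ of $p$. A diagonal argument in $\varepsilon$ then yields the desired sequence $p_i$.

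By h-convexity of $\Omega_i$ there are horoballs $H_i\supseteq\Omega_i$ with $p_i\in\partial H_i$. We write $H_i=\{y:\ b_{\theta_i}(y)\le b_{\theta_i}(p_i)\}$, where $b_\theta$ denotes the Busemann function associated with the ideal point $\theta\in\mathbb{S}^n=\partial_\infty\mathbb{H}^{n+1}$. By compactness of $\mathbb{S}^n$ we may assume $\theta_i\to\theta$. The map $(\theta,y)\mapsto b_\theta(y)$ is jointly continuous; in the hyperboloid model it is $\log(-\langle y,(1,\theta)\rangle_L)$. We set $H=\{y:\ b_\theta(y)\le b_\theta(p)\}$, a horoball with $p\in\partial H$.

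It remains to check $\Omega\subseteq H$. Any $y\in\Omega$ is a limit of points $y_i\in\Omega_i\subseteq H_i$, so $b_{\theta_i}(y_i)\le b_{\theta_i}(p_i)$. Passing to the limit by joint continuity gives $b_\theta(y)\le b_\theta(p)$, that is, $y\in H$. Hence $H$ is a supporting horoball of $\Omega$ at $p$. Since $\Omega$ is compact with nonempty interior, $\Omega\in\mathcal{H}(\mathbb{H}^{n+1})$.

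\emph{Main obstacle.} The delicate point is producing boundary points $p_i\in\partial\Omega_i$ converging to a prescribed $p\in\partial\Omega$. Hausdorff convergence of the bodies does not a priori give convergence of their boundaries. The separation argument in Step 3 handles this, and the uniform inner radius from Step 2 guarantees that the limit is a genuine body rather than a degenerate set.
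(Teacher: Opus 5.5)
Your proof is correct. The paper gives no proof of this statement to compare against: it imports it from \cite{Zhang2026} and uses it as a black box. It appears, for instance, in the characterization of horospherical support functions via $A[\varphi]\geq 0$ in Section~\ref{smoothsection}, and for h-convexity of the weak flow $\Omega_t$.

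Your argument is self-contained and purely geometric. It uses three ingredients:
\begin{itemize}
\item classical Blaschke selection inside a fixed closed ball;
\item persistence of a uniform inner ball in the Hausdorff limit;
\item closedness of supporting horoballs, via compactness of $\mathbb{S}^n$ and joint continuity of $(\theta,y)\mapsto\log(-\langle y,(\theta,1)\rangle_L)$.
\end{itemize}
It also avoids the support-function machinery of Section~\ref{smoothsection}, part of which itself relies on this theorem. Your construction of boundary points $p_i\in\partial\Omega_i$ converging to $p$ is sound, including the $2\varepsilon$ bound. That bound holds because any point $z$ on the segment satisfies $d(p,z)<\varepsilon+\min\{d(p'_i,z),d(z,q)\}\le 2\varepsilon$.

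Two minor points to tighten.
\begin{itemize}
\item Since $\rho_-$ is defined as a supremum, take balls $B(x_i,r_0/2)\subset\Omega_i$, or observe that the supremum is attained by compactness. Nothing else changes.
\item At the end, you apply characterization (1) to a set not yet known to be a ``domain''. You can instead conclude directly from the definition. Given $y\notin\Omega$, let $p$ be the last point of $\Omega$ on the geodesic segment from the interior point $x$ of Step 2 to $y$. Then $p\in\partial\Omega$ and $p\in\partial H_p$. Also $x\in\mathrm{int}\,H_p$, because a ball around $x$ lies in $\Omega\subseteq H_p$. Geodesic convexity of the horoball $H_p$ then forces $y\notin H_p$. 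Hence $\Omega=\bigcap_{p\in\partial\Omega}H_p$ is an intersection of horoballs with nonempty interior, which is exactly the paper's definition of an h-convex body.
\end{itemize}
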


For any $\xi\in\mathbb{H}^{n+1}$, define $f_\xi(e)=-\langle\xi,(e,1)\rangle$ for any $e\in\mathbb S^{n}$. 
Following the recent work of Andrews, Chen and Wei \cite{AndrewsChenWei2021}, for any compact domain $\Omega\subset \mathbb{H}^{n+1}$, define the horospherical support function $u_\Omega: \mathbb{S}^n\rightarrow R$ by 
   $$u_\Omega(e):=\sup\limits_{\xi\in \Omega} \{\log f_\xi(e)\}.$$
Moreover, they defined $\varphi_\Omega=e^{u_\Omega}>0$ and established a one-to-one correspondence between a smooth strictly h-convex body $\Omega$ and a smooth function $u_\Omega$ satisfying $$A(\varphi_\Omega):=\overline{\nabla}^2 \varphi_\Omega+\left(\frac{\varphi_\Omega-\varphi_\Omega^{-1}}{2}-\frac{|\overline{\nabla}\varphi_\Omega|^2}{2\varphi_\Omega}\right)\overline{g}>0,$$
 and proved that $$A(\varphi_\Omega)_{ij}=e^{-u_\Omega}[(\mathcal{W}-I)^{-1}]^k_i\overline{g}_{kj},$$ where $\mathcal{W}$ is the Weingarten matrix of $\partial \Omega$ at the point $x\in\partial\Omega$ satisfying $u_\Omega(e)=\log f_x(e)$.
When an h-convex domain degenerates to a point, Li and Xu \cite{LiXu2022} proved that 
\begin{align}\label{singlepoint}
    A(\varphi_{\{\xi\}})=0.
\end{align}
 For any $\Omega\in\mathcal{H}(\mathbb{H}^{n+1})$, the inner radius $\rho_{-}(\Omega)$ and the outer radius $\rho_{+}(\Omega)$ of $\Omega$ are defined by $$\rho_{-}(\Omega)=\sup \left\{\rho: B_\rho(p) \subset \Omega \text { for some } p \in \mathbb{H}^{n+1}\right\}$$ and $$\rho_{+}(\Omega)=\inf \left\{\rho: \Omega \subset B_\rho(p) \text { for some } p \in \mathbb{H}^{n+1}\right\}.$$
 It is known that for any $\Omega\in\mathcal{H}(\mathbb{H}^{n+1})$ the inner radius $\rho_{-}(\Omega)$ and the outer radius $\rho_{+}(\Omega)$ can be compared as follows:

\begin{thm}[\cites{BorisenkoGallegoReventos2001, BorisenkoMiquel1999,BorisenkoMiquel2002,Makowski2012,CabezasMiquel2007}]\label{hconvex}
    For any $\Omega\in\mathcal{H}(\mathbb{H}^{n+1})$, let $o$ be the center of an inner ball of $\Omega$. We have

    (1) The maximum distance from $o$ to $\partial \Omega$ is controlled by $\rho_-(\Omega)$
    \begin{align}
        \max_{p \in \partial \Omega} d_{\mathbb{H}^{n+1}}(o, p) \leq \rho_{-}(\Omega)+\ln \frac{\left(1+\sqrt{\tanh \frac{\rho_{-}(\Omega)}{2}}\right)^2}{1+\tanh \frac{\rho_{-}(\Omega)}{2}}<\rho_{-}(\Omega)+\ln2.\label{innerandouter}
    \end{align}
    Moreover, there exists a constant $c>0$ such that $$\rho_{+}(\Omega) \leq c\left(\rho_{-}(\Omega)+\rho_{-}^{1/2}(\Omega)\right)$$.

    (2)  For any $p\in \mathring{\Omega}$, we have \begin{align}
        \langle \partial d_{p},\nu\rangle_{\mathbb{H}^{n+1}} \geq \tanh \left(\mathrm{d}_{\mathbb{H}}(p, \partial \Omega)\right), \text{for any $q\in\partial \Omega$}.\label{support}
    \end{align}
\end{thm}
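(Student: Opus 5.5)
The statement collects results from \cite{BorisenkoGallegoReventos2001, BorisenkoMiquel1999,BorisenkoMiquel2002,Makowski2012,CabezasMiquel2007}. To re-derive it, I would use one device throughout: replace $\Omega$ by the extremal h-convex set built from an inner ball and one boundary point, and compute explicitly in the upper half-space model. The facts used are that $\Omega$ is the intersection of its supporting horoballs and that $\Omega$ contains every horocycle segment between two of its points.

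\emph{Step 1 (distance bound in (1)).} Let $B_{\rho}(o)\subset\Omega$ with $\rho=\rho_-(\Omega)$, and take $p\in\partial\Omega$ with $R=d_{\mathbb{H}^{n+1}}(o,p)$. By characterization (2) of h-convexity, $\Omega$ contains the horo-convex hull $K$ of $B_\rho(o)\cup\{p\}$, the union of horocycle segments from $p$ to points of the ball. By rotational symmetry about the geodesic $op$, $K$ is a body of revolution. The aim is to show that when $R$ is large, $K$ contains a ball of radius strictly larger than $\rho$, centred at a point of the segment $op$; this would contradict the maximality of $\rho$.
\begin{itemize}
\item I would work in the upper half-space model with $p$ and $o$ on a vertical geodesic. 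Then the boundary of $K$ is generated by the horospheres through $p$ that are tangent to $\partial B_\rho(o)$.
\item The largest ball centred on $op$ inside $K$ has a radius $r(s)$, as a function of its distance $s$ from $o$, given by an elementary formula in $\tanh$ of the radii.
\item Imposing $\max_s r(s)\le\rho$ yields exactly $R\le \rho+\ln\frac{(1+\sqrt{\tanh(\rho/2)})^2}{1+\tanh(\rho/2)}$.
\item Since $\frac{(1+\sqrt t)^2}{1+t}\le 2$ for $t\in[0,1)$, with equality only at $t=1$, the strict bound $<\rho+\ln 2$ follows.
\end{itemize}
This explicit optimisation is the step I expect to be the main obstacle. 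It requires identifying the correct extremal configuration, namely the horospheres through $p$ tangent to the inner ball, and carrying out the hyperbolic trigonometry carefully.

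\emph{Step 2 (outer radius).} Since $\Omega\subset B_{R_{\max}}(o)$, we have $\rho_+\le \rho_-+\ln\frac{(1+\sqrt{\tanh(\rho_-/2)})^2}{1+\tanh(\rho_-/2)}$. The two regimes are handled separately.
\begin{itemize}
\item For small $\rho_-$, the logarithm behaves like $2\sqrt{\tanh(\rho_-/2)}\approx\sqrt{2\rho_-}$.
\item For large $\rho_-$, the logarithm is bounded by $\ln 2$.
\end{itemize}
Combining the two regimes gives $\rho_+\le c(\rho_-+\rho_-^{1/2})$ with a universal constant $c$.

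\emph{Step 3 (support estimate (2)).} Fix $p\in\mathring\Omega$ and $d=d_{\mathbb{H}^{n+1}}(p,\partial\Omega)$, so that $B_d(p)\subset\Omega$. At $q\in\partial\Omega$, take a supporting horoball $H\supset\Omega$ and let $\nu$ be its outer normal; in the nonsmooth case, every generalized normal arises this way. Then $B_d(p)\subset H$. In the half-space model, normalise $H=\{x_{n+1}\ge 1\}$.
\begin{itemize}
\item The angle at $q$ between the geodesic from $p$ and the horosphere normal depends only on the signed distance of $p$ to the horosphere and on $d(p,q)$.
\item A direct computation shows that the cosine is at least $\tanh$ of the distance from $p$ to $\partial H$, and hence at least $\tanh d$.
\item Equality holds when $B_d(p)$ is tangent to $\partial H$ along the relevant direction.
\end{itemize}
This gives $\langle\partial d_p,\nu\rangle\ge\tanh d$.
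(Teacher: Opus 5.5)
The paper does not prove this statement; it quotes it from the cited references, so your proposal has to stand on its own. Its architecture is sound, and Steps 2 and 3 are essentially complete. Step 1, however, carries all the content and is still only a plan. You assert that maximising an explicit radius function $r(s)$ yields \emph{exactly} the stated threshold, but you never compute $r(s)$. It is also not a single $\tanh$-formula, since $\partial K$ consists of a spherical cap and a horospherical piece.

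The missing observation is that no global optimisation is needed. The threshold is a first-order effect at $s=0$. It is decided by whether the horospheres through $p$ tangent to $B_\rho(o)$ touch it in front of or behind the equator orthogonal to $op$.

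Concretely, let $K$ be the intersection of all horoballs containing $B_\rho(o)\cup\{p\}$, so $K\subset\Omega$ trivially. For a unit $w\in T_o\mathbb{H}^{n+1}$ and $y=\exp_o(t\theta)$, let $b_w(y)=\ln(\cosh t-\sinh t\,\langle\theta,w\rangle)$ be the Busemann function of the ideal point of $w$, normalised at $o$. For $x=\exp_o(\rho\hat x)$, the only horoball containing $B_\rho(o)$ with $x$ on its boundary is $H_x=\{b_{-\hat x}\le\rho\}$. Writing $p=\exp_o(Re)$, this gives
\[
x\in\partial K\iff p\in H_x\iff \cosh R+\sinh R\,\langle e,\hat x\rangle\le e^{\rho},
\]
and these $x$ are exactly the points of $\partial K$ nearest to $o$. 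If $\cosh R>e^{\rho}$, all of them satisfy $\langle\hat x,e\rangle<0$. Then the one-sided derivative of $d(\cdot,\partial K)$ at $o$ in direction $e$, namely $\min_x\langle-\hat x,e\rangle$, is positive. So $K\subset\Omega$ contains a ball of radius $>\rho$, a contradiction.

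Thus $\cosh d(o,p)\le e^{\rho}$. Since $\frac{2\sqrt\tau}{1+\tau}=\sqrt{1-e^{-2\rho}}$ for $\tau=\tanh\frac{\rho}{2}$, this is precisely
\[
d(o,p)\le\rho+\ln\bigl(1+\sqrt{1-e^{-2\rho}}\bigr)=\rho+\ln\frac{(1+\sqrt\tau)^2}{1+\tau}.
\]
You can even bypass $K$. By the same first-variation argument, the contact set of the inscribed ball with $\partial\Omega$ is not contained in an open hemisphere. The supporting horoball of $\Omega$ at a contact point with $\langle\hat x,e\rangle\ge0$ then gives the same inequality.

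Your ``exactly'' is in fact true, though only the first direction is needed. Since $d(\cdot,\partial K)$ is an infimum of the concave functions $c_H-b_H$ over supporting horoballs, $o$ is a global maximiser whenever $\cosh R\le e^\rho$. In Step 2, the bound $\ln(1+\sqrt{1-e^{-2\rho}})\le\sqrt{1-e^{-2\rho}}\le\sqrt{2\rho}$ gives $c=\sqrt{2}$ in one line.

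In Step 3 the inequality is right, but your equality claim is wrong. Take $\partial H=\{x_{n+1}=1\}$ and $p$ at height $e^{s}$ in the half-space model. A direct computation gives
\[
\min_{q\in\partial H}\langle\partial d_p,\nu\rangle=\sqrt{1-e^{-2s}},
\]
attained at horizontal Euclidean distance $\sqrt{e^{2s}-1}$ from the vertical through $p$. Since $\sqrt{1-e^{-2s}}>\tanh s$ for $s>0$, inequality (2) follows from $s\ge d_{\mathbb{H}}(p,\partial\Omega)$ and monotonicity in $s$ with room to spare, and it is never attained.
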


\subsection{Generalized principal curvatures}\label{plementaryofgpc}
In this subsection we recall some basic concepts introduced by Kohlmann \cites{Kohlmann1991,Kohlmann94,Kohlmann1998}.
For any convex body $\Omega\subseteq\mathbb{H}^{n+1}$, we define $F_{\Omega}: \mathbb{H}^{n+1} \setminus \Omega \rightarrow T \mathbb{H}^{n+1}$ as
\[ x=\exp_{f_{\Omega}(x)}^{\mathbb{H}^{n+1}}\left(d_{\mathbb{H}^{n+1}}(\Omega, x) F_{\Omega}(x)\right),
\]
where $d_{\mathbb{H}^{n+1}}(x,y)$ denotes  the distance between $x,y\in\mathbb{H}^{n+1}$ and $f_\Omega$ is the metric projection onto $\Omega$.
Denote by $\mathcal{N}_\Omega$ the image of $F_\Omega$. By  \cite{Walter74, Walter76},  $\mathcal{N}_\Omega$ is a strong Lipschitz submanifold of the tangent bundle $T\mathbb{H}^{n+1}$ and $F_\Omega|_{\partial \Omega^t}$ is bi-Lipschitz for any $t>0$.

Kohlmann \cite{Kohlmann1991} proved that for almost all $v\in\mathcal{N}_\Omega$ one can define the generalized principal curvatures  $k_i(v)\in [0,\infty]$. Denote by $\tilde{\mathcal{N}}_{\Omega}\subseteq \mathcal{N}_\Omega$ the set on which $\kappa_i(1\leq i\leq n)$ are well defined. 
In the following we assume that $\kappa_1(v)\geq \kappa_2(v)\geq\dots\geq \kappa_n(v)$ for any $v\in \tilde{\mathcal{N}}_\Omega$ such that the $\kappa_i(v)$ exist. For $0\leq r \leq n$, let 
\[\tilde{\mathcal{N}}_{\Omega, r}:=\left\{v \in \tilde{\mathcal{N}}_\Omega \mid \kappa_1(v)=\ldots=\kappa_r(v)=\infty \text { and } \kappa_{r+1}(v)<\infty\right\} .\]
 For any $ v \in \tilde{\mathcal{N}}_{\Omega, r}$, we define $g_\Omega(v)$ as $$g_\Omega(v):=\infty^{-r} \prod_{i=r+1}^n\left(1+\kappa_i(v)^2\right)^{-1/2},$$
where "$\infty^{-r}$" is an algebraic expression. For any $v\in\tilde{N}_\Omega$, define the $k$-th generalized mean curvature  
$$E_k(v)=\binom{n}{k}^{-1}\sum\limits_{\substack{1\leq i_{1}<\cdots<i_k\leq n}}\kappa_{i_{1}}(v)\cdots\kappa_{i_k}(v).$$
In this definition, for any $v\in\tilde{N}_\Omega$ we have $g_\Omega(v)E_k(v)$ is a finite nonnegative real number.
 In \cite{Zhang2026} the author defined a concept of the shifted $k$-th mean curvature for $\Omega\in\mathcal{H}(\mathbb{H}^{n+1})$ by 
$$\tilde E_k(v)=\binom{n}{k}^{-1}\sum\limits_{\substack{1\leq i_{1}<\cdots<i_k\leq n}}(\kappa_{i_{1}}(v)-1)\cdots(\kappa_{i_k}(v)-1)$$ and proved 
\begin{align}
    g_\Omega(v)\tilde{E}_k(v)=\sum\limits_{\ell=0}\limits^{k}(-1)^{k-\ell}\binom{k}{\ell}g_\Omega(v)E_\ell(v), \,\text{for any } v\in\tilde{\mathcal{N}}_{\Omega}.\label{relationsbetweenge}
\end{align}
Moreover, the author \cite{Zhang2026} proved that for any $\Omega\in\mathcal{H}(\mathbb{H}^{n+1})$, there holds $k_i(v)\geq 1, \forall v\in\tilde{N}_\Omega$.

As is remarked by Kohlmann \cite{Kohlmann1991}, for any $t>0$, $M_t=\partial\Omega^t$ is of class $C^{1,1}$. Moreover, for any $v\in\tilde{\mathcal{N}}_{\Omega}$, the second fundamental form of $M_t$ is well defined at $\exp^{\mathbb{H}^{n+1}}_{\Pi(v)}(tv)\in M_t$ and the principal curvatures are related to $k_i(v)$ as follows
\begin{align}\label{waipingxingqvlv}
    k_i(\exp^{\mathbb{H}^{n+1}}_{\Pi(v)}(tv))=\frac{\cosh t k_i(v)+\sinh t}{\sinh t k_i(v)+\cosh t}.
\end{align}

\subsection{Support measures in hyperbolic space} We denote by $\mathcal{K}(\mathbb{H}^{n+1})$ the collection of  compact, convex subsets of  $\mathbb{H}^{n+1}$ with nonempty interior.  For any $\Omega\in\mathcal{K}(\mathbb{H}^{n+1})$, by the argument in \cite{Kohlmann1991}  we know that   there exist Borel measures $\Phi_k(\Omega,\cdot)$ on $T^1\mathbb{H}^{n+1}$ such that for any Borel set $\beta\subseteq T^1\mathbb{H}^{n+1}$, there holds 
\begin{align}
\text{Vol}_{\mathbb{H}^{n+1}}(F_\Omega^{-1}(\beta)\cap(\Omega^\varepsilon\setminus\Omega))=\sum_{k=0}^{n} \ell_{n+1-k}(\varepsilon) \Phi_k(\Omega, \beta),\label{steiner}
\end{align}
where 
$$\ell_{k+1}(t)=\int_0^t \sinh ^k(s) \cosh ^{n-k}(s) d s, \quad k=0,1, \cdots, n.$$ 
The coefficients $\Phi_0(\Omega, \cdot), \cdots, \Phi_{n}(\Omega, \cdot)$  are called the support measures of the convex body $\Omega$ that can be represented as \begin{align*}&\Phi_k(\Omega, \beta):=\binom{n}{k}\int_{\beta \cap \mathcal{N}_\Omega} g_\Omega(v) {E}_{n-k}(v) d \mathcal{L}^n v \quad \text { for }\,0\leq k \leq n, \end{align*}
where $E_k(v):=\binom{n}{k}^{-1} \sum\limits_{1\leq i_1<\ldots<i_k \leq n} \kappa_{i_1}(v)\cdot\ldots\cdot \kappa_{i_k}(v)$ for $1\leq k\leq n$ and $E_0(v)=1$. 
When the boundary $\partial \Omega$ is smooth, $\Phi_k(\Omega, \cdot)$ can be expressed as
$$\Phi_k(\Omega, \beta)=\binom{n}{k}\int_{\partial \Omega \cap \Pi(\beta\cap\mathcal{N}_\Omega)} E_{n-k}(\kappa) d \mu.$$

The concept of support measures above induces a concept of curvature measures  $V_0(\Omega, \cdot)$, $\cdots, V_{n+1}(\Omega, \cdot)$ on $\mathbb{H}^{n+1}$ such that for any $\beta\in\mathcal{B}(\mathbb{H}^{n+1})$, $$V_{n-k}(\Omega,\beta)=\Phi_{n-k}(\Omega,\Pi^{-1}(\beta)\cap\mathcal{N}_\Omega).$$

As in \cite{Zhang2026}, we can define a concept of modified support measures as 
\begin{align*}&\tilde\Phi_k(\Omega, \beta):=\binom{n}{k}\int_{\beta \cap \mathcal{N}_\Omega} g_\Omega(v) \tilde{E}_{n-k}(v) d \mathcal{L}^n v \quad \text { for }\,0\leq k \leq n,\end{align*}
and $\tilde{E}_0(v)=1$. In the following, let $\tilde{\Phi}_k(\Omega)=\tilde{\Phi}_k(\Omega,\mathcal{N}_\Omega)$, $0\leq k\leq n$.

The concept of modified support measures above induces a concept of modified curvature measures  $\tilde V_0(\Omega, \cdot)$, $\cdots, \tilde V_{n+1}(\Omega, \cdot)$ on $\mathbb{H}^{n+1}$ such that for any $\beta\in\mathcal{B}(\mathbb{H}^{n+1})$, $$\tilde V_{n-k}(\Omega,\beta)=\tilde\Phi_{n-k}(\Omega,\Pi^{-1}(\beta)\cap\mathcal{N}_\Omega).$$

For any $v\in T^1\mathbb{H}^{n+1}$, let $\pi(v)$ be the ideal point $\xi\in\mathbb{S}^n$ corresponding to $-v$. We define the following horospherical Gauss map 
\begin{align*}
G_\Omega(p):=\{\pi(v):v\in\mathcal{N}_\Omega\cap T_p\mathbb{H}^{n+1}\}\subseteq \mathbb{S}^n,     
\end{align*}
which induces a concept of modified area measures for h-convex bodies: 
\begin{align*}
    \tilde{S}_{n-k}(\Omega,\beta):=\tilde{\Phi}_{n-k}(\Omega,\pi^{-1}\beta),\quad \beta\in\mathcal{B}(\mathbb{S}^n).
\end{align*}
When $\Omega$ is smooth strictly h-convex, by the calculations in \cite{LiXu2022}, we have 
\begin{align*}
 \tilde{S}_{n-k}(\Omega,\beta)=\int_{\mathbb{S}^n\cap \beta}  \varphi_\Omega^{-k} \sigma_{n-k}(A(\varphi_\Omega))d\mathcal{L}^n.  
\end{align*}

In \cite[Theorem 2.4]{Kohlmann94} Kohlmann  proved  the following result.         
\begin{thm}[\cite{Kohlmann94}]\label{weakcon}
    Let $\Omega, \Omega_1, \Omega_2, \dots$ be compact convex domains in $\mathbb{H}^{n+1}$ and \[\lim\limits_{i\rightarrow\infty}d_{\mathbb{H}^{n+1}}(\Omega,\Omega_i)=0.\] Then we have \begin{align*}
        V_k(\Omega_i,\cdot)\stackrel{w}\rightharpoonup V_k(\Omega,\cdot), \quad k=0,\dots,n.
    \end{align*}
\end{thm}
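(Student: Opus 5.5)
The plan is to localize the Steiner formula \eqref{steiner} to functions on $\mathbb{H}^{n+1}$ and then invert it, following the classical Euclidean argument in \cite[Chapter 4]{Schneider}.

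\textbf{Step 1: the localized parallel volume.} Take $\beta=\Pi^{-1}(\beta')\cap\mathcal{N}_\Omega$ for a Borel set $\beta'\subseteq\mathbb{H}^{n+1}$ in \eqref{steiner}. Since $\Pi\circ F_\Omega=f_\Omega$, this gives
\begin{align*}
\mathrm{Vol}\bigl(\{x\in\Omega^\varepsilon\setminus\Omega:\ f_\Omega(x)\in\beta'\}\bigr)=\sum_{k=0}^n \ell_{n+1-k}(\varepsilon)V_k(\Omega,\beta').
\end{align*}
Approximating a continuous compactly supported $h\ge 0$ by simple functions, we get for every such $h$
\begin{align*}
\mu_\varepsilon(\Omega,h):=\int_{\Omega^\varepsilon\setminus\Omega} h(f_\Omega(x))\,dx=\sum_{k=0}^n \ell_{n+1-k}(\varepsilon)\int h\,dV_k(\Omega,\cdot).
\end{align*}
By linearity this extends to all continuous compactly supported $h$.

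\textbf{Step 2: continuity of $\mu_\varepsilon$ in $\Omega$.} We show $\Omega\mapsto\mu_\varepsilon(\Omega,h)$ is continuous for the Hausdorff metric, for each fixed $\varepsilon>0$. This rests on two facts.
\begin{enumerate}[label=(\roman*)]
\item If $\Omega_i\to\Omega$, then $f_{\Omega_i}\to f_\Omega$ uniformly on compact subsets of $\mathbb{H}^{n+1}$. The metric projection onto a convex set is unique. A limit point of $f_{\Omega_i}(x_i)$, with $x_i\to x$, lies in $\Omega$ and realizes $d(x,\Omega)$, because distance functions converge uniformly. Uniqueness of the nearest point then forces convergence, and a standard compactness argument makes it uniform on compacts.
\item $\chi_{\Omega_i^\varepsilon\setminus\Omega_i}\to\chi_{\Omega^\varepsilon\setminus\Omega}$ almost everywhere. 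Indeed $\Omega_i^\varepsilon\to\Omega^\varepsilon$ in the Hausdorff sense, and the boundaries $\partial\Omega$ and $\partial\Omega^\varepsilon$ of these convex bodies are Lebesgue null sets.
\end{enumerate}
All sets involved lie in a fixed compact set. Dominated convergence then gives $\mu_\varepsilon(\Omega_i,h)\to\mu_\varepsilon(\Omega,h)$.

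\textbf{Step 3: inversion.} The functions $\ell_1,\dots,\ell_{n+1}$ are linearly independent on $(0,\infty)$; for instance, their Taylor expansions at $0$ begin with $t, t^2/2,\dots, t^{n+1}/(n+1)$ at leading order. Hence we can choose $0<\varepsilon_0<\dots<\varepsilon_n$ such that the matrix $\bigl(\ell_{n+1-k}(\varepsilon_j)\bigr)_{j,k}$ is invertible. Solving the linear system from Step 1 writes $\int h\,dV_k(\Omega,\cdot)$ as a fixed linear combination of $\mu_{\varepsilon_j}(\Omega,h)$, with coefficients independent of $\Omega$. By Step 2, $\int h\,dV_k(\Omega_i,\cdot)\to\int h\,dV_k(\Omega,\cdot)$ for every continuous compactly supported $h$.

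\textbf{Step 4: conclusion.} Every measure $V_k(\Omega_i,\cdot)$ is supported on $\partial\Omega_i$, and these boundaries lie in a fixed compact set. So testing against compactly supported continuous functions already gives weak convergence; for bounded continuous test functions, multiply by a cutoff equal to $1$ on that compact set. Taking $h\equiv1$ there also shows the total masses converge.

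\textbf{Main obstacle.} I expect the real work to be in Step 2(i): proving uniform convergence of the metric projections in $\mathbb{H}^{n+1}$, and checking that $f_\Omega$ is continuous. Both should follow from the uniqueness of nearest points for geodesically convex sets together with compactness.
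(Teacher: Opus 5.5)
Your proposal is correct and follows essentially the same route as the proof the paper relies on. The paper only cites this theorem from Kohlmann, and the sketch it gives for the support-measure version, Theorem~\ref{weakconsupport}, uses the same three ingredients (the localized Steiner formula, convergence of the metric projections $f_{\Omega_i}\to f_\Omega$, and inversion of the linear system in $\ell_1(\varepsilon),\dots,\ell_{n+1}(\varepsilon)$), following Schneider's Euclidean argument; the one point to spell out is in Step 2(ii), where the claim that interior points of $\Omega$ eventually lie in $\Omega_i$ does not follow from Hausdorff convergence alone but needs convexity of $\Omega_i$ (separate $x\notin\Omega_i$ from $\Omega_i$ by the totally geodesic hypersurface through $f_{\Omega_i}(x)$ orthogonal to the geodesic to $x$).
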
 
   
As in Kohlmann's proof \cite{Kohlmann94}, we have the following theorem.
\begin{thm}
    \label{weakconsupport}
    Let $\Omega, \Omega_1, \Omega_2, \dots$ be compact convex domains in $\mathbb{H}^{n+1}$ and suppose that $$\lim\limits_{i\rightarrow\infty}d_{\mathbb{H}^{n+1}}(\Omega,\Omega_i)=0.$$ Then we have \begin{align*}
        \Phi_k(\Omega_i,\cdot)\stackrel{w}\rightharpoonup \Phi_k(\Omega,\cdot), \quad k=0,\dots,n.
    \end{align*}
\end{thm}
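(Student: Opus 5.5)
The plan is to follow Kohlmann's proof of Theorem \ref{weakcon}, but work with the local Steiner formula \eqref{steiner} on the tangent bundle instead of its projection to $\mathbb{H}^{n+1}$. Put $\mu_\varepsilon(\Omega,\beta):=\mathrm{Vol}_{\mathbb{H}^{n+1}}\bigl(F_\Omega^{-1}(\beta)\cap(\Omega^\varepsilon\setminus\Omega)\bigr)$ for Borel $\beta\subseteq T^1\mathbb{H}^{n+1}$. By \eqref{steiner}, $\mu_\varepsilon(\Omega,\cdot)=\sum_{k=0}^n \ell_{n+1-k}(\varepsilon)\Phi_k(\Omega,\cdot)$. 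The functions $\ell_1,\dots,\ell_{n+1}$ are linearly independent on $(0,\infty)$, since $\ell_{j}(\varepsilon)\sim \varepsilon^{j}/j$ as $\varepsilon\to0$. So we can fix $0<\varepsilon_0<\dots<\varepsilon_n$ with $\det(\ell_{n+1-k}(\varepsilon_j))_{j,k}\neq 0$. Inverting this matrix writes each $\Phi_k(\Omega,\cdot)$ as a fixed linear combination $\sum_j a_{kj}\mu_{\varepsilon_j}(\Omega,\cdot)$, with coefficients that do not depend on $\Omega$. It therefore suffices to prove $\mu_\varepsilon(\Omega_i,\cdot)\rightharpoonup\mu_\varepsilon(\Omega,\cdot)$ weakly for each fixed $\varepsilon>0$.

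Fix a continuous bounded $h$ on $T^1\mathbb{H}^{n+1}$. Since all measures are supported in a fixed compact subset of the unit tangent bundle (the $\Omega_i$ are uniformly bounded), we may take $h$ compactly supported. Then
\begin{align*}
\int h\,d\mu_\varepsilon(\Omega_i,\cdot)=\int_{\Omega_i^\varepsilon\setminus\Omega_i} h(F_{\Omega_i}(x))\,d\mathrm{Vol}(x).
\end{align*}
We need two facts.
\begin{enumerate}
\item[(a)] The indicator functions of $\Omega_i^\varepsilon\setminus\Omega_i$ converge to that of $\Omega^\varepsilon\setminus\Omega$ almost everywhere. This holds because $d(\cdot,\Omega_i)\to d(\cdot,\Omega)$ uniformly, and because $\partial\Omega$ and $\partial\Omega^\varepsilon$ have measure zero, being boundaries of convex bodies.
\item[(b)] $F_{\Omega_i}(x)\to F_\Omega(x)$ for every $x\notin\Omega$.
\end{enumerate}
Given (a) and (b), dominated convergence (the integrand is bounded by $\|h\|_\infty$ on a fixed compact set) gives the claim.

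Step (b) is the main point. The metric projection onto a convex set in $\mathbb{H}^{n+1}$ is unique. For $x\notin\Omega$ we have $x\notin\Omega_i$ for large $i$ and $d(x,\Omega_i)\to d(x,\Omega)>0$. Any limit point $p$ of $f_{\Omega_i}(x)$ lies in $\Omega$ (Hausdorff convergence) and satisfies $d(x,p)=d(x,\Omega)$, so $p=f_\Omega(x)$ by uniqueness, and hence $f_{\Omega_i}(x)\to f_\Omega(x)$. The unit vector $F_{\Omega_i}(x)$ is the initial velocity at $f_{\Omega_i}(x)$ of the unit-speed geodesic to $x$. In the hyperboloid model this is an explicit continuous function of the pair $(f_{\Omega_i}(x),x)$ when they are distinct, so $F_{\Omega_i}(x)\to F_\Omega(x)$ in $T^1\mathbb{H}^{n+1}$.

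The remaining routine checks are that the limit of $\int h\,d\Phi_k(\Omega_i,\cdot)$ follows by linearity from the limits for $\mu_{\varepsilon_j}$, and that uniform boundedness of the total masses, which follows from the same representation, is consistent with weak convergence. The main obstacle is the almost-everywhere argument in (a) and (b), especially controlling the region near $\partial\Omega$ where $d(x,\Omega)$ is small. The fact that $\partial\Omega$ and $\partial\Omega^{\varepsilon}$ are null sets handles it.
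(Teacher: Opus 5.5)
Your proposal is correct and is essentially the paper's argument written out in full. The paper likewise derives $F_{\Omega_i}(p)\to F_\Omega(p)$ for $p\notin\Omega$ from pointwise convergence of the metric projections, and then defers to Kohlmann's proof, which is exactly your combination of weak convergence of the local parallel volumes $\mu_\varepsilon(\Omega_i,\cdot)$ and inversion of the local Steiner polynomial. The one step to tighten is (a): uniform convergence of $d_{\mathbb{H}^{n+1}}(\cdot,\Omega_i)$ does not by itself force points of $\mathrm{int}\,\Omega$ to lie in $\Omega_i$ eventually, so you need convexity of $\Omega_i$. Concretely, if $B(x,r)\subseteq\Omega$ and $d_{\mathbb{H}^{n+1}}(\Omega_i,\Omega)<r' < r$, then $x\in\Omega_i$. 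Otherwise, take the point at distance $r'$ beyond $x$ on the geodesic ray from $f_{\Omega_i}(x)$ through $x$: it lies in $\Omega$, yet its distance from $\Omega_i$ exceeds $r'$.
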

\begin{proof}
    For any $p\in \mathbb{H}^{n+1}\setminus \Omega$, Kohlmann \cite{Kohlmann94} proved that $\Pi(F_{\Omega_i})(p)\rightarrow\Pi(F_{\Omega})(p)$ as $i\rightarrow\infty$. Noticing that $F_{\Omega_i}(p)\in T^1{\mathbb{H}^{n+1}}$ is actually the unit vector such that $$\exp_{\Pi{F_{\Omega_i}(p)}}(d(\Pi(F_{\Omega_i}(p)),p)F_{\Omega_i}(p)),$$ we have $F_{\Omega_i}(p)\rightarrow F_{\Omega}(p)$ as $i\rightarrow\infty$. Then using similar arguments in \cite{Kohlmann94} we complete the proof.
\end{proof}

 For any $q\in \mathbb{H}^{n+1}$, let $$S_q(v):=\sinh{\left(d_q(\Pi(v))\right)}\left\langle v, \operatorname{grad} d_q(\Pi(v))\right\rangle_{\mathbb{H}^{n+1}}, v\in TM, ||v||=1.$$ 
  Kohlmann \cite[Theorem 3.4]{Kohlmann94} proved the generalized Minkowski integral formulas.
\begin{thm}[\cite{Kohlmann94}]\label{thmkohlmin}
    For any $\Omega \in \mathcal{H}(\mathbb{H}^{n+1})$ and $r \in\{1, \ldots, n\}$,  we have
$$\int_{\mathcal{N}_\Omega} g_\Omega(v)\left((\cosh d_q(\Pi(v)) E_{r-1}(v)-S_q(v) E_r(v)\right) \mathrm{d} \mathcal{L}^n v=0.$$
\end{thm}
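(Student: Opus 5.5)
We follow Kohlmann's argument for the Minkowski formulas: read the identity off the local Steiner formula \eqref{steiner} for the outer parallel bodies $\Omega^\varepsilon$, where the boundaries $\partial\Omega^\varepsilon$ are $C^{1,1}$ and the classical smooth Minkowski formula holds almost everywhere. The smooth identity is standard. For a $C^{1,1}$ closed hypersurface $M$ bounding a convex body, with conformal field $V=\sinh d_q\,\partial d_q$ satisfying $\overline{\nabla}V=\cosh d_q\, \overline{g}$, the tangential part of $V$ gives a Lipschitz vector field. Integrating the divergence of $T_{r-1}(V^\top)$, where $T_{r-1}$ is the Newton tensor of the second fundamental form, yields
\[
\int_M \big(\cosh d_q\, E_{r-1} - \langle V,\nu\rangle E_r\big)\,d\mu=0 .
\]
Divergence-freeness of $T_{r-1}$ in space forms holds a.e. for $C^{1,1}$ surfaces, and the divergence theorem applies to Lipschitz fields. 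So this holds on $M_t=\partial\Omega^t$ for every $t>0$.

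Next we transport this to $\mathcal{N}_\Omega$. The map $v\mapsto \exp_{\Pi(v)}(tv)$ is bi-Lipschitz from $\mathcal{N}_\Omega$ onto $M_t$, and its inverse is $F_\Omega|_{M_t}$. At $v\in\tilde{\mathcal N}_\Omega$, Kohlmann's computation gives Jacobian
\[
J_t(v)=g_\Omega(v)\prod_i\big(\cosh t\,\kappa_i(v)+\sinh t\big)\sqrt{\cdots},
\]
with the usual normalization, i.e.\ $J_t(v)=g_\Omega(v)\prod_i(\sinh t\,\kappa_i+\cosh t)$ up to the factor coming from \eqref{waipingxingqvlv}. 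Combining this with \eqref{waipingxingqvlv}, the products $E_r(M_t)\,J_t$ become polynomials in $\cosh t,\sinh t$ whose coefficients are $g_\Omega E_j(v)$, $j\le n$. These coefficients are finite even where $\kappa_i=\infty$, which is exactly the mechanism behind \eqref{steiner}. The geometric factors $\cosh d_q$ and $\langle V,\nu\rangle$ evaluated at $\exp_{\Pi(v)}(tv)$ converge, as $t\to0$, to $\cosh d_q(\Pi(v))$ and $S_q(v)$. Here $\nu$ at $\exp_{\Pi(v)}(tv)$ is the parallel transport of $v$ along the geodesic.

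Finally we let $t\to0^+$. Rewritten on $\mathcal N_\Omega$, the integrand becomes
\[
g_\Omega(v)\big(a_t(v)E_{r-1}(v)-b_t(v)E_r(v)\big)+O(t)\sum_j g_\Omega E_j(v),
\]
with $a_t\to\cosh d_q(\Pi(v))$ and $b_t\to S_q(v)$ uniformly. Since $\int_{\mathcal N_\Omega} g_\Omega E_j\,d\mathcal L^n$ are the finite total support measures, dominated convergence gives the stated formula.

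The main obstacle is the bookkeeping in the second step. One has to check that the expansion of $E_r(M_t)J_t$ reproduces $g_\Omega E_r$ in the leading order at $t=0$, and that the mixing with lower and higher $E_j$ appears only with coefficients that vanish as $t\to0$. This uses the algebraic convention for $\infty^{-r}$ and bounded terms from $\kappa_i\ge1$ via \eqref{relationsbetweenge}. If this direct expansion proves messy, the fallback is to apply the smooth formula to the smooth strictly h-convex approximations of Lemma \ref{smoothapproxiamate} and pass to the limit. The weak convergence of Theorem \ref{weakconsupport} applies there, since $\cosh d_q\circ\Pi$ and $S_q$ are continuous on $T^1\mathbb{H}^{n+1}$.
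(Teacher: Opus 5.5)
The paper gives no proof of this statement. It quotes it from Kohlmann's 1994 paper (Theorem 3.4 there) and uses it as a black box. Your proposal is therefore an independent, self-contained route, and its outline is sound. However, the first step is justified incorrectly. For $r\ge 2$, the field $T_{r-1}(V^\top)$ on the $C^{1,1}$ hypersurface $M_t=\partial\Omega^t$ is not Lipschitz, because $T_{r-1}$ is a polynomial in the second fundamental form, which is only an a.e.\ defined $L^\infty$ tensor. Consequently ``$\mathrm{div}\,T_{r-1}=0$ a.e.'' has no pointwise meaning, since it needs derivatives of $h$. Even an a.e.\ identity could not be fed into the divergence theorem, because the distributional divergence of an $L^\infty$ field can have a singular part that is invisible a.e. The Minkowski formula on $M_t$ is nevertheless true, and the repair is available in the paper. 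By Lemma~\ref{outerhconvex}, $\Omega^t$ is h-convex with $C^{1,1}$ boundary, so Lemma~\ref{higherregularapproximation} gives smooth $M_{t,i}\to M_t$ in $C^{1,\alpha}\cap W^{2,p}$ with $p\ge n$. Then $E_{r-1}$ and $E_r$ converge in $L^1$, $\cosh d_q$ and $\langle V,\nu\rangle$ converge uniformly, and the smooth identity passes to the limit.

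Your Jacobian is also garbled. The Jacobian of $v\mapsto \exp_{\Pi(v)}(tv)$ from $\mathcal N_\Omega$ onto $M_t$ is exactly $J_t(v)=g_\Omega(v)\prod_i(\cosh t+\sinh t\,\kappa_i(v))$, with no extra factor. The square-root factor is the Sasaki Jacobian of $f_t:\mathcal N_\Omega\to\mathcal N_{\Omega^t}$ used in the proof of Theorem~\ref{metricball}, and it cancels against $g_{\Omega^t}$. With \eqref{waipingxingqvlv} you get
\[
\binom{n}{r}E_r(M_t)\,J_t=g_\Omega\sum_{|I|=r}\prod_{i\in I}(\sinh t+\cosh t\,\kappa_i)\prod_{i\notin I}(\cosh t+\sinh t\,\kappa_i).
\]
This expression is multilinear in the $\kappa_i$, and the coefficient of $\prod_{i\in J}\kappa_i$ carries the factor $\sinh^{|I\triangle J|}t$, so at $t=0$ only $g_\Omega\sigma_r(\kappa)$ survives. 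Combine this with $\cosh d_q(\gamma_v(t))=\cosh t\cosh d_q(\Pi v)+\sinh t\,S_q(v)$ and $S_q(\gamma_v'(t))=\sinh t\cosh d_q(\Pi v)+\cosh t\,S_q(v)$. The pulled-back identity is then an exact polynomial identity in $\cosh t,\sinh t$ with finite coefficients, so letting $t\to0^+$ is immediate. Only $g_\Omega E_j\ge 0$ is used here; $\kappa_i\ge 1$ and \eqref{relationsbetweenge} play no role. The step you flagged as the main obstacle is therefore routine, and the real subtlety was the $C^{1,1}$ Minkowski formula.

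Since the repair already requires smooth approximation, your fallback is the more economical proof and makes the parallel-body expansion unnecessary. Take smooth strictly h-convex $\Omega_i\to\Omega$ from Lemma~\ref{smoothapproxiamate}. On $\Omega_i$ the classical formula reads
\[
\binom{n}{r-1}^{-1}\int\cosh d_q\circ\Pi\,d\Phi_{n-r+1}(\Omega_i,\cdot)=\binom{n}{r}^{-1}\int S_q\,d\Phi_{n-r}(\Omega_i,\cdot).
\]
Theorem~\ref{weakconsupport} carries this identity to $\Omega$. This works because $S_q(v)=\langle v,V(\Pi v)\rangle$ with $V=\sinh d_q\,\partial d_q$ is smooth on all of $T^1\mathbb{H}^{n+1}$, and all the normal bundles lie in a fixed compact set. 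This is the same approximation-plus-weak-continuity mechanism the paper uses in the proof of Theorem~\ref{metricball}. The repaired parallel-body route trades weak continuity of support measures for the stronger $W^{2,p}$ approximation. In return, it exhibits the formula as the $t=0$ case of an identity valid for every outer parallel body.
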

As mentioned in \cite{Zhang2026}, the arguments in \cite[Lemma 2.6]{HuLiWei2022}, together with   Theorem \ref{thmkohlmin} and \eqref{relationsbetweenge} yield a modified  version of the  generalized Minkowski integral formulas for shifted principal curvatures.
\begin{thm}\label{shiftedminkow}
For any $\Omega \in \mathcal{H}(\mathbb{H}^{n+1})$ and $r \in\{1, \ldots, n\}$,  we have
$$\int_{\mathcal{N}_\Omega} g_\Omega(v)\left((\cosh\left(d_q(\Pi(v)))-S_q(v)\right) \tilde{E}_{r-1}(v)-S_q(v) \tilde{E}_r(v)\right) \mathrm{d} \mathcal{L}^n v=0.$$
\end{thm}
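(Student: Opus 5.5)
We derive Theorem \ref{shiftedminkow} from Kohlmann's generalized Minkowski formulas (Theorem \ref{thmkohlmin}) by expanding the shifted curvatures through \eqref{relationsbetweenge} and checking that the resulting combination telescopes. We write $C(v)=\cosh d_q(\Pi(v))$ and $S=S_q(v)$. For each $\ell\in\{1,\dots,n\}$, Theorem \ref{thmkohlmin} gives
$$I_\ell:=\int_{\mathcal{N}_\Omega} g_\Omega\,(C E_{\ell-1}-S E_\ell)\,d\mathcal{L}^n=0.$$
All integrands $g_\Omega E_\ell$ are finite on $\tilde{\mathcal N}_\Omega$, which has full measure. Moreover $|S|$ and $C$ are bounded on the compact set $\Pi(\mathcal N_\Omega)=\partial\Omega$, and $\int g_\Omega E_\ell<\infty$ because these integrals are the total support measures. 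Hence every expression below is integrable, and we may manipulate finite linear combinations freely.

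The integrand of Theorem \ref{shiftedminkow} is $g_\Omega\big[(C-S)\tilde E_{r-1}-S\tilde E_r\big]$. By \eqref{relationsbetweenge},
$$g_\Omega\tilde E_{r-1}=\sum_{\ell=0}^{r-1}(-1)^{r-1-\ell}\binom{r-1}{\ell}g_\Omega E_\ell,\qquad g_\Omega\tilde E_r=\sum_{\ell=0}^{r}(-1)^{r-\ell}\binom{r}{\ell}g_\Omega E_\ell.$$
We split $-S\tilde E_{r-1}-S\tilde E_r$ using Pascal's rule $\binom{r}{\ell}=\binom{r-1}{\ell}+\binom{r-1}{\ell-1}$. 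The terms with $\binom{r-1}{\ell}$ cancel against $-S\tilde E_{r-1}$, since their signs are opposite. What remains is
$$-S\sum_{\ell=1}^{r}(-1)^{r-\ell}\binom{r-1}{\ell-1}E_\ell=\sum_{m=0}^{r-1}(-1)^{r-1-m}\binom{r-1}{m}\big(-S E_{m+1}\big).$$
Combining this with $C\tilde E_{r-1}=\sum_m(-1)^{r-1-m}\binom{r-1}{m}C E_m$, the whole integrand equals
$$\sum_{m=0}^{r-1}(-1)^{r-1-m}\binom{r-1}{m}\,g_\Omega\,(C E_m-S E_{m+1}).$$
Integrating gives $\sum_m(-1)^{r-1-m}\binom{r-1}{m}I_{m+1}=0$, since each $m+1\in\{1,\dots,r\}\subseteq\{1,\dots,n\}$. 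This is the same bookkeeping as in \cite[Lemma 2.6]{HuLiWei2022} for the smooth case.

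The main point needing care is justifying the pointwise algebra where some $\kappa_i=\infty$. There the products $g_\Omega E_\ell$ are defined through the formal convention $\infty^{-r}$, so the identity has to hold at the level of these finite products rather than of $E_\ell$ itself. This is exactly what \eqref{relationsbetweenge} provides, and the combination above is linear in the quantities $g_\Omega E_\ell$ with bounded coefficients $C$ and $S$. So the manipulation is legitimate on all of $\tilde{\mathcal N}_\Omega$, and we never need to expand $\tilde E$ where it is infinite.
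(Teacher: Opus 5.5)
Your proposal is correct and is the same route the paper takes. The paper obtains Theorem \ref{shiftedminkow} by combining Kohlmann's formulas (Theorem \ref{thmkohlmin}) with the expansion \eqref{relationsbetweenge}, following \cite[Lemma 2.6]{HuLiWei2022}, and your Pascal-rule rewriting of the integrand as $\sum_{m=0}^{r-1}(-1)^{r-1-m}\binom{r-1}{m}\,g_\Omega\,(C E_m - S E_{m+1})$ makes that step explicit and checks out; working only with the finite products $g_\Omega E_\ell$ correctly handles the points where some $\kappa_i=\infty$.
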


\subsection{Smooth modified quermassintegrals preserving shifted curvature flows}

Andrews-Chen-Wei \cite{AndrewsChenWei2021} considered the modified quermassintegrals preserving $1$-homogeneous shifted curvature flows with smooth strictly h-convex initial hypersurfaces. The following theorem is a direct conclusion of \cite[Theorem 1.7]{AndrewsChenWei2021}.
\begin{thm}[\cite{AndrewsChenWei2021}]\label{ACWsmoothflow}
Suppose that $M$ is a closed smooth strictly h-convex hypersurface. Then solutions to the flow \eqref{smoothflow1} with initial hypersurface $M$ preserve smooth strict h-convexity and the $k$-th modified quermassintegral, exist for all time and converge to a geodesic sphere.
\end{thm}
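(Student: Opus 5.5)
The statement is essentially a specialization of \cite[Theorem 1.7]{AndrewsChenWei2021}, so the plan is to check that the flow \eqref{smoothflow1} lies in the class of flows covered there. Andrews, Chen and Wei treat flows of the form
\begin{align*}
\frac{\partial}{\partial t}X=\bigl(\phi(t)-\Psi(\mathcal{W}-I)\bigr)\nu .
\end{align*}
Here $\Psi$ is a smooth symmetric function of the shifted principal curvatures $\kappa_i-1$, defined on the positive cone. They require $\Psi$ to be monotone increasing, homogeneous of degree one, and concave and inverse concave (or satisfying their equivalent structural condition). The global term $\phi(t)$ is chosen so that a modified quermassintegral is preserved.

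\textbf{Step 1: structure of $\Psi$.} Take $\Psi(\lambda)=E_{k+1}(\lambda)/E_k(\lambda)$ with $\lambda_i=\kappa_i-1>0$. This quotient is positive on the positive cone and homogeneous of degree one. It is strictly increasing in each $\lambda_i$ and concave; both facts are classical and follow from the Newton--Maclaurin inequalities. It is also inverse concave, that is, $\lambda\mapsto \Psi(\lambda^{-1})^{-1}$ is concave. To see this, note that $\Psi(\lambda^{-1})^{-1}=E_{n-k}(\lambda)/E_{n-k-1}(\lambda)$ up to normalizing constants, which is again a quotient of adjacent elementary symmetric functions and hence concave. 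So $\Psi$ satisfies all the hypotheses imposed in \cite{AndrewsChenWei2021}.

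\textbf{Step 2: the global term and preservation of $\tilde W_k$.} I would use the first variation formula for modified quermassintegrals of smooth h-convex bodies. This formula follows from the variation of the $W_i$ together with the definition \eqref{Quermassine}, or from \cite{AndrewsChenWei2021,LiXu2022}. For a normal variation with speed $F$, it reads
\begin{align*}
\frac{d}{dt}\tilde W_k(\Omega_t)=c_{n,k}\int_{M_t}\tilde E_k\,F\,d\mu .
\end{align*}
Substituting $F=\phi(t)-\tilde E_{k+1}/\tilde E_k$ gives
\begin{align*}
\frac{d}{dt}\tilde W_k(\Omega_t)=c_{n,k}\Bigl(\phi(t)\int_{M_t}\tilde E_k\,d\mu-\int_{M_t}\tilde E_{k+1}\,d\mu\Bigr).
\end{align*}
With $\phi$ defined by \eqref{phi}, the bracket vanishes. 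So $\phi$ coincides with the global term of \cite{AndrewsChenWei2021}, and $\tilde W_k$ is preserved along the flow.

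\textbf{Step 3: invoke \cite[Theorem 1.7]{AndrewsChenWei2021}.} With the hypotheses verified, that theorem yields the remaining conclusions for any closed smooth strictly h-convex initial hypersurface. Smooth strict h-convexity is preserved, which follows from the maximum principle applied to the tensor $A(\varphi)$, or equivalently to $\mathcal{W}-I$. The solution exists for all time, and it converges smoothly to a geodesic sphere whose enclosed ball has the same $\tilde W_k$ as $\Omega$.

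\textbf{Main obstacle.} The only delicate point is bookkeeping: confirming that the normalizations of $\tilde E_k$ (through the binomial factors in $E_k$) and of $\phi$ match those of \cite{AndrewsChenWei2021}, and that inverse concavity is stated for the same argument, namely the shifted Weingarten map, as in their hypotheses. I would check this directly against their definitions. Concavity and inverse concavity of $E_{k+1}/E_k$ are standard, so once the normalizations are matched the theorem is a direct citation.
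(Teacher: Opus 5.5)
Your proposal is correct and follows the same route as the paper, which records this statement simply as a direct consequence of \cite[Theorem 1.7]{AndrewsChenWei2021}. Your additional checks are accurate and just make that citation explicit: $\Psi=E_{k+1}/E_k$ is positive, increasing, one-homogeneous, concave and inverse concave, and \eqref{phi} is precisely the global term making $\tilde W_k$ stationary. In fact $\Psi(\lambda^{-1})^{-1}=E_{n-k}(\lambda)/E_{n-k-1}(\lambda)$ holds exactly, since $E_j(\lambda^{-1})=E_{n-j}(\lambda)/E_n(\lambda)$.
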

Let $S^i_j=h^i_j-\delta^i_j$ be the shifted Weingarten matrix. Denote by $F(S)=\frac{E_{k+1}(S)}{E_k(S)}$ the curvature function. In the following, denote $$\dot{F}^{ij}(S)=\frac{\partial F}{\partial S^{i}_j}.$$
We next calculate some evolution equations of the smooth flow \eqref{smoothflow1}. The following lemma is proved in \cite{AndrewsChenWei2021, HuLiWei2022}.
\begin{lemma}
    \begin{align}
     \frac{\partial}{\partial t}d\mu_t=(\phi(t)-F)H d\mu_t .\label{mianji} 
    \end{align}
    \begin{align}\label{Elevolution}
        \frac{\partial}{\partial t}\tilde{E}_l=\dot{\tilde{E}}_l^{ij}\nabla_i\nabla_j F+((F-\phi(t))((H-n+2l){\tilde{E}}_l-(n-l)\tilde{E}_{l+1}).
    \end{align}
    \begin{align}\label{weingartenevolution}
        \frac{\partial}{\partial t} S^{i}_j=&\dot{F}^{kl} \nabla_k\nabla_l S^i_j+\ddot{F}^{kl,pq} \nabla_i S_{kl} \nabla_jS_{pq} +(\dot{F}^{kl}(S^2)_{kl}+2F-2\phi(t))S^i_j\nonumber\\&-(\phi(t)+\dot{F}^{kl}\delta^l_k)(S^2)^i_j+\dot{F}^{kl}(S^2)_{kl}\delta^i_j.
    \end{align}
\end{lemma}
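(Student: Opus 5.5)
The evolution equations follow by specializing the general variational formulas for a normal flow $\partial_t X = f\nu$ in $\mathbb{H}^{n+1}$ to the speed $f=\phi(t)-F$, where $F=\tilde E_{k+1}/\tilde E_k$. For such a flow the standard identities, with $\nu$ the outer normal, are
\begin{align*}
\partial_t g_{ij}=2f h_{ij},\qquad \partial_t h^i_j=-\nabla^i\nabla_j f-f\big((h^2)^i_j-\delta^i_j\big).
\end{align*}
Since $\phi$ depends only on $t$, we have $\nabla f=-\nabla F$.

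For \eqref{mianji}, we have $\partial_t d\mu_t=\frac12 g^{ij}\partial_t g_{ij}\,d\mu_t=fH\,d\mu_t$, which is the claim.

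For \eqref{Elevolution}, write $h=S+I$. Since $\partial_t S^i_j=\partial_t h^i_j$, we get
\begin{align*}
(h^2)-I=S^2+2S.
\end{align*}
This gives $\partial_t S^i_j=\nabla^i\nabla_jF+(F-\phi)(S^2+2S)^i_j$. Contracting with $\dot{\tilde E}_l^{ij}$, the first term yields $\dot{\tilde E}_l^{ij}\nabla_i\nabla_jF$. For the zeroth-order terms, normalize $\tilde E_l=\binom nl^{-1}\sigma_l(S)$ and use
\begin{align*}
\dot\sigma_l^{ij}S_{ij}=l\sigma_l,\qquad \dot\sigma_l^{ij}(S^2)_{ij}=\sigma_1\sigma_l-(l+1)\sigma_{l+1}.
\end{align*}
Together with $\sigma_1(S)=H-n$, this gives
\begin{align*}
(H-n)\tilde E_l-(n-l)\tilde E_{l+1}+2l\tilde E_l=(H-n+2l)\tilde E_l-(n-l)\tilde E_{l+1},
\end{align*}
using $\binom nl^{-1}(l+1)\binom n{l+1}=(n-l)$. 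This is the required form.

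For \eqref{weingartenevolution}, start from $\partial_tS^i_j=\nabla^i\nabla_jF+(F-\phi)(S^2+2S)^i_j$ and convert $\nabla^i\nabla_jF$ into $\dot F^{kl}\nabla_k\nabla_lS^i_j$. First write
\begin{align*}
\nabla_i\nabla_jF=\dot F^{kl}\nabla_i\nabla_jS_{kl}+\ddot F^{kl,pq}\nabla_iS_{kl}\nabla_jS_{pq}.
\end{align*}
Then commute derivatives using Codazzi, which holds for $S$ because $\nabla\delta=0$, together with the Gauss equation in $\mathbb{H}^{n+1}$, $R_{ijkl}=h_{ik}h_{jl}-h_{il}h_{jk}-(g_{ik}g_{jl}-g_{il}g_{jk})$. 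This is Simons' identity
\begin{align*}
\nabla_i\nabla_jh_{kl}=\nabla_k\nabla_lh_{ij}+h_{ij}(h^2)_{kl}-h_{kl}(h^2)_{ij}+h_{il}h_{jk}\cdot0+\dots
\end{align*}
where the dots stand for the standard terms. Rewritten in $S$, the $\mathbb{H}^{n+1}$ curvature terms combine with $h=S+I$ to give
\begin{align*}
\nabla_i\nabla_jS_{kl}-\nabla_k\nabla_lS_{ij}=S_{ij}(S^2)_{kl}-S_{kl}(S^2)_{ij}+S_{ij}S_{kl}\cdot0+(S^2)_{kl}\delta_{ij}-(S^2)_{ij}\delta_{kl}+\dots
\end{align*}
The precise form is the usual shifted Simons identity, as in \cite{AndrewsChenWei2021}.

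Contract this with $\dot F^{kl}$, using the $1$-homogeneity relation $\dot F^{kl}S_{kl}=F$. Add the reaction term $(F-\phi)(S^2+2S)$ and collect terms:
\begin{itemize}
\item the coefficient of $S^i_j$ is $\dot F^{kl}(S^2)_{kl}+2F-2\phi$;
\item the coefficient of $(S^2)^i_j$ is $-(\phi+\dot F^{kl}\delta_{kl})$, where the $F$ coming from the reaction term cancels against $-\dot F^{kl}S_{kl}$;
\item the trace term is $\dot F^{kl}(S^2)_{kl}\delta^i_j$.
\end{itemize}

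The main obstacle is the bookkeeping in this commutator step: getting the signs and the $\delta$ terms right from the ambient curvature $-1$. I would verify the result on a geodesic sphere as a consistency check.
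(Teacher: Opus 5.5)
Your proposal is correct and is the standard computation the paper relies on; the paper gives no proof of its own and only cites \cite{AndrewsChenWei2021, HuLiWei2022}, and your collected coefficients reproduce \eqref{mianji}, \eqref{Elevolution} and \eqref{weingartenevolution} exactly. The one step worth writing out is the shifted Simons identity: the curvature-$(-1)$ terms $g_{kl}h_{ij}-g_{ij}h_{kl}$ cancel exactly the linear terms $g_{ij}S_{kl}-g_{kl}S_{ij}$ produced by expanding $h_{ij}(h^2)_{kl}-h_{kl}(h^2)_{ij}$ with $h=S+I$, while your remaining ``$\dots$'' terms (of the form $X_{il}Y_{jk}-X_{jk}Y_{il}$ with $X,Y$ polynomials in $S$) vanish after contraction with $\dot F^{kl}$ because $\dot F$ commutes with $S$.
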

Andrews, Chen and Wei \cite{AndrewsChenWei2021} proved that the flow \eqref{smoothflow1} of h-convex hypersurfaces is equivalent to an initial value problem for the horospherical support function.
\begin{lemma}\label{horosupportevolution}
Suppose that $M$ is smooth strictly h-convex. The flow \eqref{smoothflow1} is equivalent  to the
following initial value problem.
\begin{align*}
\left\{\begin{aligned} 
\frac{\partial}{\partial t} \varphi & =-F(A^{-1}[\varphi])+\varphi\phi(t), \\ 
\varphi(\cdot,0) & =\varphi_\Omega(\cdot),\end{aligned}\right.
\end{align*}
    \end{lemma}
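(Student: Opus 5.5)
We compute the time derivative of the horospherical support function along the flow \eqref{smoothflow1}. We reparametrize the evolving hypersurfaces by the horospherical Gauss map and use the $1$-homogeneity of $F$ together with the identity $A(\varphi_\Omega)_{ij}=e^{-u_\Omega}[(\mathcal{W}-I)^{-1}]^k_i\overline{g}_{kj}$ of Andrews, Chen and Wei.

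\emph{Step 1 (null vector attached to a boundary point).} Let $X\in M_t$ have unit outer normal $\nu$ and horospherical Gauss image $e$, the ideal point in direction $-\nu$. The geodesic $\cosh s\,X-\sinh s\,\nu$ is asymptotic to the null ray spanned by $X-\nu$. Hence $(e,1)=\lambda(X-\nu)$ for some $\lambda>0$. Pairing with $X$ and using $\langle X,X\rangle_L=-1$ and $\langle X,\nu\rangle_L=0$ gives
\[
\lambda=-\langle X,(e,1)\rangle_L=f_X(e)=\varphi(e,t).
\]
By strict h-convexity, the supremum defining $u_{\Omega_t}(e)$ is attained exactly at this $X$. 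So $(e,1)=\varphi(e,t)\,(X-\nu)$.

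\emph{Step 2 (evolution with $e$ fixed).} By the result of \cite{AndrewsChenWei2021}, strict h-convexity is preserved, so $G_{\Omega_t}:M_t\to\mathbb{S}^n$ is a diffeomorphism. We write $X(x(e,t),t)$ for the point with Gauss image $e$ and set $\varphi(e,t)=-\langle X(x(e,t),t),(e,1)\rangle_L$. Differentiating in $t$ with $e$ fixed gives
\[
\partial_t\varphi=-\langle \partial_tX+\partial_i X\,\partial_t x^i,(e,1)\rangle_L .
\]
The tangential term vanishes, because $\langle\partial_iX,X-\nu\rangle_L=0$ (as $\partial_iX\perp X$ and $\partial_iX\perp\nu$). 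Substituting $\partial_tX=(\phi(t)-F)\nu$ and Step 1 yields
\[
\partial_t\varphi=-\varphi\,(\phi(t)-F)\,\langle\nu,X-\nu\rangle_L=\varphi\,\phi(t)-\varphi\,F(\mathcal{W}-I).
\]

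\emph{Step 3 (rewrite the speed via $A$).} The identity $A(\varphi)_{ij}=\varphi^{-1}[(\mathcal{W}-I)^{-1}]^k_i\overline{g}_{kj}$ gives $A^{-1}[\varphi]=\varphi(\mathcal{W}-I)$ as endomorphisms on $\mathbb{S}^n$, transplanted through the Gauss map, with the same eigenvalues. Since $F=E_{k+1}/E_k$ is homogeneous of degree one,
\[
\varphi F(\mathcal{W}-I)=F(A^{-1}[\varphi]).
\]
This gives the stated equation $\partial_t\varphi=-F(A^{-1}[\varphi])+\varphi\,\phi(t)$ with $\varphi(\cdot,0)=\varphi_\Omega$. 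The global term $\phi(t)$ is a functional of $\Omega_t$, which is itself determined by $\varphi(\cdot,t)$, so it may be regarded as a functional of $\varphi$.

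\emph{Step 4 (converse).} Let $\varphi$ solve the scalar problem with $A[\varphi]>0$. The one-to-one correspondence of \cite{AndrewsChenWei2021} produces smooth strictly h-convex bodies $\Omega_t$ with $\varphi_{\Omega_t}=\varphi(\cdot,t)$, together with an explicit embedding $X(e,t)$ built from $\varphi$ and $\overline{\nabla}\varphi$. Running Step 2 backwards shows that the normal component of $\partial_tX$ equals $\phi(t)-F$. A time-dependent tangential diffeomorphism then converts $X(e,t)$ into a solution of \eqref{smoothflow1}. Uniqueness for the strictly parabolic problem identifies the two flows.

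The main care lies in Step 1, namely the correct normalization $(e,1)=\varphi(X-\nu)$ and the sign conventions, and in justifying that the tangential term drops out. The converse in Step 4 is standard once the explicit embedding from $\varphi$ is available.
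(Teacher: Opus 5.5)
Your proposal is correct and is essentially the standard derivation: $(e,1)=\varphi\,(X-\nu)$ gives $\partial_t\log\varphi=\phi(t)-F(\mathcal{W}-I)$, and then $1$-homogeneity of $F$ together with $A^{-1}[\varphi]=\varphi(\mathcal{W}-I)$ yields the scalar equation. The paper does not reprove this lemma but cites Andrews--Chen--Wei, where it is obtained by this same support-function computation, and your converse in Step~4 is the usual reparametrization argument.
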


Let $D=\tilde{E}_{k+1}-\phi(t)\tilde{E}_k=(F-\phi(t))\tilde{E}_k$. By the evolution equations above, we have 
\begin{align}\label{Devolution}
    \frac{\partial}{\partial t}D=\dot{D}^{ij} \nabla_i\nabla_j F+(F-\phi(t))\dot{D}^{ij}((S^2)^{i}_j+2S^i_j)-\phi'(t)\tilde{E}_k.
\end{align}
Noticing that $\int_{M_t} Dd\mu_t=0$, we have 
\begin{align}\label{phievo}
    \frac{\partial}{\partial t}\phi(t)=\frac{1}{\int_{M_t} \tilde{E}_kd\mu_t}\left(\int_{M_t} (\phi(t)-F)HD+\dot{D}^{ij} \nabla_i\nabla_j F+(F-\phi(t))\dot{D}^{ij}((S^2)^{i}_j+2S^i_j)  d\mu_t\right)
\end{align}
In the following we need to estimate $\int_{M_t}|D| d\mu_t$. 
\begin{lemma}\label{intDevo} For a.e. $t\in[0,\infty)$, we have
    \begin{align*}
    &\frac{\partial}{\partial t} \int_{M_t}|D| d\mu_t\nonumber\\
    =&\int_{M_t}\mathrm{sgn}(D)\dot{D}^{ij} \nabla_i\nabla_j Fd\mu_t\nonumber\\
    &+\int_{M_t}\frac{\mathrm{sgn}(D)\int_{M_t}\tilde{E}_kd\mu_t-\int_{M_t}\mathrm{sgn}(D)\tilde{E}_kd\mu_t}{\int_{M_t}\tilde{E}_k d\mu_t}\\&((F-\phi(t))\dot{D}^{ij}((S^2)^{i}_j+2S^i_j)+(\phi(t)-F)HD)d\mu_t.
\end{align*}
\end{lemma}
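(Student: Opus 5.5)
The plan is to differentiate under the integral sign. Wherever $D\neq 0$ we have $\partial_t|D|=\mathrm{sgn}(D)\,\partial_t D$. Combining this with the area evolution \eqref{mianji} gives
\begin{align*}
\frac{\partial}{\partial t}\int_{M_t}|D|\,d\mu_t=\int_{M_t}\mathrm{sgn}(D)\,\partial_t D\,d\mu_t+\int_{M_t}|D|(\phi(t)-F)H\,d\mu_t .
\end{align*}
Since $D=(F-\phi)\tilde E_k$ and $\tilde E_k>0$ along the smooth flow (strict h-convexity is preserved by Theorem \ref{ACWsmoothflow}), we have $\mathrm{sgn}(D)=\mathrm{sgn}(F-\phi)$. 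Hence $|D|(\phi-F)H=\mathrm{sgn}(D)(\phi-F)HD$, and the last integrand can be written with the factor $\mathrm{sgn}(D)$ as well.

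Next, substitute \eqref{Devolution} for $\partial_tD$. The term $\mathrm{sgn}(D)\dot D^{ij}\nabla_i\nabla_jF$ appears directly as the first term of the claimed identity. The remaining pieces are
\[
\mathrm{sgn}(D)\bigl[(F-\phi)\dot D^{ij}((S^2)^i_j+2S^i_j)+(\phi-F)HD\bigr]-\mathrm{sgn}(D)\phi'(t)\tilde E_k .
\]
Into the $\phi'$ term, plug in \eqref{phievo}, which was derived from $\frac{d}{dt}\int_{M_t}D\,d\mu_t=0$. Note that $\phi'(t)$ is a constant in space, so
\[
\int_{M_t}\mathrm{sgn}(D)\phi'(t)\tilde E_k\,d\mu_t=\phi'(t)\int_{M_t}\mathrm{sgn}(D)\tilde E_k\,d\mu_t .
\]
Grouping the spatial integrand $(F-\phi)\dot D^{ij}(\cdots)+(\phi-F)HD$ with the coefficient $\mathrm{sgn}(D)-\frac{\int_{M_t}\mathrm{sgn}(D)\tilde E_k\,d\mu_t}{\int_{M_t}\tilde E_k\,d\mu_t}$ produces exactly the displayed quotient structure.

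One bookkeeping point concerns the $\dot D^{ij}\nabla_i\nabla_jF$ term inside $\phi'$. It carries the weight $-\int_{M_t}\mathrm{sgn}(D)\tilde E_k\,d\mu_t/\int_{M_t}\tilde E_k\,d\mu_t$, and I must check how it is accounted for. Integrating it in space gives $\int_{M_t}\dot D^{ij}\nabla_i\nabla_jF\,d\mu_t$. Here $\dot D^{ij}=\dot{\tilde E}_{k+1}^{ij}-\phi\dot{\tilde E}_k^{ij}$ is divergence free: the Newton tensors of the shifted Weingarten map are divergence free in hyperbolic space by the Codazzi equation. After integrating by parts twice this integral therefore vanishes, and so it does not appear in the grouped term. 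This is consistent with the stated formula, where that contribution only occurs with the coefficient $\mathrm{sgn}(D)$.

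The main obstacle is justifying differentiation of $t\mapsto\int_{M_t}|D|\,d\mu_t$ for a.e. $t$, since $|\cdot|$ is not differentiable at $0$. I would handle this first by noting that the function is locally Lipschitz in $t$: $D$ is smooth in space-time, and $\bigl||a|-|b|\bigr|\le|a-b|$. By Rademacher's theorem it is then differentiable a.e. To identify the derivative, approximate $|D|$ by $\sqrt{D^2+\varepsilon^2}$. Differentiate with the formula $\frac{D}{\sqrt{D^2+\varepsilon^2}}\partial_tD$ plus the area term, integrate on $[t_1,t_2]$, and let $\varepsilon\to0$ by dominated convergence. On the set $\{D=0\}$ the coefficient $\frac{D}{\sqrt{D^2+\varepsilon^2}}$ tends to $0$, which matches the convention $\mathrm{sgn}(0)=0$. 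The area term $|D|(\phi-F)H$ also vanishes there, so the identity holds in integrated form and hence pointwise for a.e. $t$.
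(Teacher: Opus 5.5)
Your proposal is correct and follows essentially the same route as the paper. Both arguments combine the area evolution \eqref{mianji} with \eqref{Devolution}, substitute \eqref{phievo} for $\phi'(t)$, regroup with the weight $\mathrm{sgn}(D)-\int_{M_t}\mathrm{sgn}(D)\tilde E_k\,d\mu_t/\int_{M_t}\tilde E_k\,d\mu_t$, and drop $\int_{M_t}\dot D^{ij}\nabla_i\nabla_jF\,d\mu_t$ because the shifted Newton tensors are divergence free. Your regularization by $\sqrt{D^2+\varepsilon^2}$ also justifies the ``a.e.\ $t$'' qualifier, which the paper's formal computation leaves implicit.
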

\begin{proof}
    By the evolution equations above and the fact that $\tilde{E}$ is divergence free \cite{Reilly1973}, we have
\begin{align}
    &\frac{\partial}{\partial t} \int_{M_t}|D| d\mu_t\nonumber\\
    =&\int_{M_t} (\phi(t)-F)H|D| d\mu_t+\int_{M_t}\frac{\partial}{\partial t}|D| d\mu_t \nonumber\\
    =&\int_{M_t}(\phi(t)-F)H|D|+\mathrm{sgn}(D)(\dot{D}^{ij} \nabla_i\nabla_j F+(F-\phi(t))\dot{D}^{ij}((S^2)^{i}_j+2S^i_j)-\phi'(t)\tilde{E}_k) d\mu_t\nonumber\\
    =&\int_{M_t}(\phi(t)-F)H|D|\nonumber\\&+\frac{\mathrm{sgn}(D)\int_{M_t}\tilde{E}_kd\mu_t-\int_{M_t}\mathrm{sgn}(D)\tilde{E}_kd\mu_t}{\int_{M_t}\tilde{E}_k d\mu_t}(\dot{D}^{ij} \nabla_i\nabla_j F+(F-\phi(t))\dot{D}^{ij}((S^2)^{i}_j+2S^i_j))d\mu_t\nonumber\\&-\frac{\int_{M_t}\mathrm{sgn}(D)\tilde{E}_kd\mu_t}{\int_{M_t}\tilde{E}_k d\mu_t}\int_{M_t}(\phi(t)-F)HDd\mu_t\nonumber\\
    =&\int_{M_t}\frac{\mathrm{sgn}(D)\int_{M_t}\tilde{E}_kd\mu_t-\int_{M_t}\mathrm{sgn}(D)\tilde{E}_kd\mu_t}{\int_{M_t}\tilde{E}_k d\mu_t}\\&(\dot{D}^{ij} \nabla_i\nabla_j F+(F-\phi(t))\dot{D}^{ij}((S^2)^{i}_j+2S^i_j)+(\phi(t)-F)HD)d\mu_t\nonumber\\
    =&\int_{M_t}\mathrm{sgn}(D)\dot{D}^{ij} \nabla_i\nabla_j Fd\mu_t\nonumber\\
    &+\int_{M_t}\frac{\mathrm{sgn}(D)\int_{M_t}\tilde{E}_kd\mu_t-\int_{M_t}\mathrm{sgn}(D)\tilde{E}_kd\mu_t}{\int_{M_t}\tilde{E}_k d\mu_t}\\&((F-\phi(t))\dot{D}^{ij}((S^2)^{i}_j+2S^i_j)+(\phi(t)-F)HD)d\mu_t\nonumber
\end{align}
\end{proof}
\subsection{Convolutions on $\mathbb{S}^n$}

In this subsection, following \cite{Gri1999, Schuster2007}, we introduce the convolution on $\mathbb S^n$.

Consider the standard Lie group $SO(n+1)$ and its subgroup $SO(n)$. It is known that $\mathbb{S}=SO(n+1)/SO(n)$ and we can give an identification of $\mathbb{S}^n$ with $SO(n+1)/SO(n)$ for $u\in\mathbb{S}^n$ by $$u=g e_{\text{north}}\rightarrow gSO(n).$$ In the following we denote $G=SO(n+1)$ and $H=SO(n)$. Consider a sequence $\{f_m\}_{m\in \mathbb{N}}$ of smooth functions  on $SO(n+1)$ such that 

(1). $f_m\geq 0$ and $\int_{SO(n+1)} f_m(v) d v=1$.

(2). $\mathrm{spt}(f_m)\subset B(\frac{1}{m}),$ where $B(r)$ is the geodesic ball with radius $r$ centered at the unit element of $SO(n+1)$.

Denote by $C(\mathbb{S}^n)$ and $\mathcal{M}(\mathbb{S}^n)$ the collections of continuous functions and finite signed measures on $\mathbb{S}^n$ respectively. For any $\varphi\in C(\mathbb{S}^n)$ and  $\mu\in\mathcal{M}(\mathbb{S}^n)$, denote 
$$\langle\varphi, \mu\rangle=\int_{\mathbb{S}^n} \varphi(v) d \mu(v).$$

For any $\varphi\in C(\mathbb{S}^n)$, define the convolution of $\varphi$ as 
\begin{align*}
    f_m*\varphi(u)=\int_{SO(n+1)} \varphi(v^{-1}u)f_m(v) dv,\, \text{for} \,u\in \mathbb{S}^n.
\end{align*}

We know that $f_m*\varphi$ is smooth and converges to $\varphi$ in the uniform topology as $m\rightarrow \infty$.

For any $\mu\in\mathcal{M}(\mathbb{S}^n)$, we define the convolution of $\mu$ as 
\begin{align*}
    \langle f_m*\mu, \varphi\rangle=\int_{SO(n+1)}f_m(v)\int_{\mathbb{S}^n} \varphi(vu) d\mu(u) dv.
\end{align*}
When $\mu$ is absolutely continuous with respect to the Lebesgue measure on $\mathbb{S}^n$, there exists $\mu^*\in L^1(\mathbb{S}^n)$ such that  \begin{align*}
    \mu(u)=\mu^*(u) du.
\end{align*}
Then we have 
\begin{align*} \langle f_m*\mu, \varphi\rangle&=\int_{SO(n+1)}f_m(v)\int_{\mathbb{S}^n} \varphi(vu) d\mu(u) dv\\
&=\int_{SO(n+1)}f_m(v)\int_{\mathbb{S}^n} \varphi(vu)\mu^*(u) du dv\\
&=\int_{SO(n+1)}f_m(v)\int_{\mathbb{S}^n} \varphi(u)\mu^*(v^{-1}u) du dv\\
&=\int_{\mathbb{S}^n}\varphi(u)\int_{SO(n+1)}f_m(v) \mu^*(v^{-1}u) dv du\\
&=\int_{\mathbb{S}^n}\varphi(u)f_m*\mu^*(u) du.
\end{align*}
\section{Some properties of horospherical support functions and smooth approximation of h-convex bodies}\label{smoothsection}
We first introduce some properties of horospherical support functions.
\subsection{Some properties of horospherical support functions}
Let $\Omega\subset \mathbb{H}^{n+1}$ be a compact domain. We extend $\varphi_\Omega$ to $\mathbb{R}^{n+1}$ by 
$$\overline{\varphi_\Omega}(x)=\sup\limits_{\xi\in\Omega}\{\xi_{n+2}|x|-(\xi_1,\dots,\xi_{n+1})x\}, \, x\in\mathbb{R}^{n+1}.$$
\begin{lemma}\label{convexitylemma}
    $\overline{\varphi_\Omega}$ is a real valued convex function.
\end{lemma}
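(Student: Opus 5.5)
We show that $\overline{\varphi_\Omega}$ is a pointwise supremum of linear functions on $\mathbb{R}^{n+1}$, and that this supremum is finite everywhere.

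\textbf{Step 1: each function in the family is convex.} For fixed $\xi=(\xi_1,\dots,\xi_{n+2})\in\Omega\subset\mathbb{H}^{n+1}$ we have $\xi_{n+2}>0$, since the hyperboloid model is the future sheet. Consider
$$\ell_\xi(x)=\xi_{n+2}|x|-(\xi_1,\dots,\xi_{n+1})\cdot x .$$
Since $\xi_{n+2}>0$, the map $x\mapsto \xi_{n+2}|x|$ is convex, being a positive multiple of a norm. The second term is linear. Hence $\ell_\xi$ is convex, and it is also positively $1$-homogeneous. In fact $\ell_\xi\ge 0$, because $|(\xi_1,\dots,\xi_{n+1})|<\xi_{n+2}$ on the hyperboloid. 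On $\mathbb{S}^n$ we have $\ell_\xi(e)=-\langle \xi,(e,1)\rangle_L=f_\xi(e)$. So $\overline{\varphi_\Omega}$ restricts to $\sup_\xi f_\xi=e^{u_\Omega}=\varphi_\Omega$, which justifies calling it an extension.

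\textbf{Step 2: finiteness.} Since $\Omega$ is compact, the coordinates of $\xi$ are bounded on $\Omega$, say $|\xi|_{\mathbb{R}^{n+2}}\le C$. Then $\ell_\xi(x)\le 2C|x|$ for all $\xi\in\Omega$, so the supremum is finite for every $x$. It is also at least $\ell_{\xi_0}(x)\ge 0$ for any fixed $\xi_0\in\Omega$, which is finite, so $\overline{\varphi_\Omega}$ is real valued.

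\textbf{Step 3: conclusion.} A pointwise supremum of convex functions is convex, because its epigraph is the intersection of the epigraphs. Combined with Step 2, $\overline{\varphi_\Omega}$ is a real valued convex function on $\mathbb{R}^{n+1}$. It is also positively $1$-homogeneous and Lipschitz with constant $2C$.

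The only point needing any care is the sign $\xi_{n+2}>0$ in Step 1, which is exactly where the identification of $\mathbb{H}^{n+1}$ with the future time-like hyperboloid is used. There is no real obstacle.
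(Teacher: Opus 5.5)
Your proof is correct and is essentially the paper's argument. The paper verifies the convexity inequality directly, using the triangle inequality for $\xi_{n+2}|x|$ and then bounding the supremum of a sum by the sum of the suprema, which is the unpacked form of your ``supremum of convex functions is convex'' step; your Step 2 (finiteness from compactness of $\Omega$) and your explicit use of $\xi_{n+2}>|(\xi_1,\dots,\xi_{n+1})|>0$ supply the real-valuedness and the sign that the paper's proof uses without comment.
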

\begin{proof}
    For any $t\in [0,1]$, \begin{align*}
        \overline{\varphi_\Omega}(tx+(1-t)y)=&\sup\limits_{\xi\in\Omega}\{\xi_{n+2}|tx+(1-t)y|-(\xi_1,\dots,\xi_{n+1})(tx+(1-t)y)\}\\
        \leq&\sup\limits_{\xi\in\Omega}\{t(\xi_{n+2}|x|-(\xi_1,\dots,\xi_{n+1})x)+(1-t)(\xi_{n+2}|y|-(\xi_1,\dots,\xi_{n+1})y)\}\\
        \leq& t\sup\limits_{\xi\in\Omega}\{\xi_{n+2}|x|-(\xi_1,\dots,\xi_{n+1})x\}\\&+(1-t)\sup\limits_{\xi\in\Omega}\{\xi_{n+2}|y|-(\xi_1,\dots,\xi_{n+1})y\}\\
        \leq& t\overline{\varphi_\Omega}(x)+(1-t)\overline{\varphi_\Omega}(y).
    \end{align*}
\end{proof}
Denote $\overline f_\xi=|x|f_\xi(\frac{x}{|x|}), x\in \mathbb{R}^{n+1}\setminus \{0\}$. Actually $\overline{\varphi_\Omega}(x)=\sup\limits_{\xi\in\Omega} \overline f_{\xi}(x)$.

 We say that an h-convex domain $\Omega$ is  strictly h-convex in the sense of supporting horospheres if  every support horosphere intersects with its boundary at exactly one point. 
When $\Omega$ is strictly h-convex in the sense of supporting horospheres, it is easy to know that for any $e\in\mathbb{S}^n$ there exists a unique $x\in\partial \Omega$ such that  $f_x(e)=\varphi_\Omega(e).$
As in the Euclidean case, we derive the relation between strict h-convexity   and $C^1$ regularity of the horospherical support function.
\begin{lemma}
    For any $\Omega\in\mathcal{H}(\mathbb{H}^{n+1})$, $\Omega$ is strictly h-convex in the sense of supporting horospheres iff $\varphi_\Omega\in C^1$.
\end{lemma}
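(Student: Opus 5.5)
We prove the lemma through the $1$-homogeneous convex extension $\overline{\varphi_\Omega}$ of Lemma \ref{convexitylemma}, imitating the Euclidean argument that a convex body is strictly convex iff its support function is $C^1$ away from the origin. For $x\neq 0$ each $\overline f_\xi$ is smooth in $x$. Since $\Omega$ is compact, the supremum $\overline{\varphi_\Omega}(x)=\sup_{\xi\in\Omega}\overline f_\xi(x)$ is attained. By Danskin's theorem, or equivalently the standard description of the subdifferential of a pointwise maximum of a compact family of convex smooth functions, we get
\[
\partial\overline{\varphi_\Omega}(x)=\mathrm{conv}\{\nabla\overline f_\xi(x):\ \xi\in\Omega,\ \overline f_\xi(x)=\overline{\varphi_\Omega}(x)\},\qquad x\neq0 .
\]
A convex function is differentiable at $x$ iff this set is a singleton, and a convex function differentiable on an open set is automatically $C^1$ there. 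Since $\varphi_\Omega=\overline{\varphi_\Omega}|_{\mathbb{S}^n}$ and $\overline{\varphi_\Omega}$ is $1$-homogeneous, $\varphi_\Omega\in C^1(\mathbb{S}^n)$ iff $\overline{\varphi_\Omega}$ is $C^1$ on $\mathbb{R}^{n+1}\setminus\{0\}$. So the lemma reduces to the following claim: for each $e\in\mathbb{S}^n$, the set $\{\nabla\overline f_\xi(e):\xi\in Z(e)\}$ is a singleton iff $Z(e)$ is a single point, where $Z(e):=\{\xi\in\Omega: f_\xi(e)=\varphi_\Omega(e)\}$.

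Next we identify $Z(e)$ geometrically. The level set $\{\xi\in\mathbb{H}^{n+1}: f_\xi(e)=c\}=\{-\langle\xi,(e,1)\rangle=c\}$ is a horosphere centered at the ideal point determined by $e$. The set $\{f_\xi(e)\le \varphi_\Omega(e)\}$ is the horoball containing $\Omega$ that touches $\partial\Omega$. Hence $Z(e)=\partial\Omega\cap H_e$, where $H_e$ is the supporting horosphere of $\Omega$ with center $e$. Every supporting horosphere of $\Omega$ arises this way: for each ideal point there is exactly one supporting horoball of $\Omega$ centered there. Therefore strict h-convexity in the sense of supporting horospheres is equivalent to $Z(e)$ being a singleton for every $e$.

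The direction ``strictly h-convex $\Rightarrow C^1$'' is now immediate from the subdifferential formula. For the converse, suppose $\xi\neq\eta$ both lie in $Z(e)$. We must show $\nabla\overline f_\xi(e)\neq\nabla\overline f_\eta(e)$; this injectivity is the main obstacle. Write $\overline f_\xi(x)=\xi_{n+2}|x|-\bar\xi\cdot x$ with $\bar\xi=(\xi_1,\dots,\xi_{n+1})$, so that $\nabla\overline f_\xi(e)=\xi_{n+2}e-\bar\xi$. Suppose $\xi_{n+2}e-\bar\xi=\eta_{n+2}e-\bar\eta$. Then $\bar\xi-\bar\eta=(\xi_{n+2}-\eta_{n+2})e$. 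Combined with $f_\xi(e)=f_\eta(e)$, i.e. $\xi_{n+2}-\bar\xi\cdot e=\eta_{n+2}-\bar\eta\cdot e$, this gives $(\xi_{n+2}-\eta_{n+2})(1-|e|^2)=0$, which is automatically true and hence yields no contradiction. The gradient in the radial direction carries no extra information, so we pass to the spherical gradient, i.e. the tangential component. Tangentially the condition is that $\bar\xi$ and $\bar\eta$ have equal projections onto $e^\perp$, and the radial condition says $\xi$ and $\eta$ lie on the same horosphere. We then use the fact that the spherical gradient of $\log f_\xi$ at $e$ together with the value $f_\xi(e)$ determines $\xi$; this is the Andrews–Chen–Wei reconstruction formula $X=\frac12\big(\varphi^{-1}+\varphi+\varphi^{-1}|\overline\nabla\varphi|^2\big)(e,1)\cdot(\ldots)-(\overline\nabla\varphi,0)\ldots$, which expresses the point from $\varphi$ and $\overline\nabla\varphi$. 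We verify this directly. Since $\langle\xi,\xi\rangle_L=-1$, the unknowns $\xi_{n+2}$ and $\bar\xi$ satisfy $|\bar\xi|^2=\xi_{n+2}^2-1$. Also $\bar\xi\cdot e=\xi_{n+2}-c$ and the tangential part $\bar\xi^{T}$ is given. The hyperboloid constraint then becomes $|\bar\xi^T|^2+(\xi_{n+2}-c)^2=\xi_{n+2}^2-1$, which is linear in $\xi_{n+2}$ because $c>0$, so $\xi_{n+2}$ is uniquely determined. Hence $\xi=\eta$, a contradiction. Consequently $\partial\overline{\varphi_\Omega}(e)$ contains two distinct points, so $\varphi_\Omega$ is not differentiable at $e$. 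This proves the equivalence.
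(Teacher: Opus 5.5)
Your proof is correct and follows essentially the same route as the paper. Both use the convex $1$-homogeneous extension $\overline{\varphi_\Omega}$, Danskin's formula $\partial\overline{\varphi_\Omega}(x)=\mathrm{conv}\{\nabla\overline f_\xi(x):\overline f_\xi(x)=\overline{\varphi_\Omega}(x)\}$, the singleton criterion for differentiability, and the identification of strict h-convexity with uniqueness of the maximizer $\xi$ in each direction.

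The one real difference is that you explicitly prove a step the paper leaves implicit after writing down $\nabla\overline f_\xi(x)$: that $\xi\mapsto\nabla\overline f_\xi(e)$ is injective on $Z(e)$, using the hyperboloid constraint $\langle\xi,\xi\rangle_L=-1$ together with $\varphi_\Omega(e)>0$. Your direct verification of this is sufficient, so the half-written reconstruction formula with ellipses can simply be dropped.
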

\begin{proof}
    By the convexity of $\overline{\varphi_\Omega}$, $\varphi_\Omega\in C^1$ iff $\overline{\varphi_\Omega}$ is differentiable in $\mathbb{R}^{n+1}\setminus \{0\}$.  
For any $x\in\mathbb{R}^{n+1}\setminus\{0\}$, consider the subdifferential of $\overline{\varphi_\Omega}$ at $x$, which is defined by
\begin{align*}
\partial \overline{\varphi_\Omega}(x)=\{v\in\mathbb{R}^{n+1}: \overline{\varphi_\Omega}(y)\geq \overline{\varphi_\Omega}(x)+v(y-x)\}.
\end{align*}
By classical convex analysis we have that $\overline{\varphi_\Omega}$ is differentiable at $x$ iff $\partial\overline{\varphi_\Omega}(x)$ is a set of single point.  Moreover, the strict h-convexity in the sense of supporting horospheres of $\Omega$ is equivalent to the condition that for any $x\in\mathbb{R}^{n+1}\setminus\{0\}$ there exists a unique $\xi^x\in\Omega$ such that 
$\overline{\varphi_\Omega}(x)=\overline f_{\xi^x}(x)$. Notice that $\overline{\varphi_\Omega}(x)=\sup\limits_{\xi\in\Omega}\overline f_\xi(x)$ and $f_\xi(x)$ is smooth with respect to $x$ and $\xi$. Hence, by Danskin's Theorem \cite{Danskin1967}, we have $$\partial\overline{\varphi_\Omega}(x)=\mathrm{conv}\{\overline{\nabla}_{\mathbb{R}^{n+1}}\overline f_{\xi}(x): \overline f_{\xi}(x)=\overline{\varphi_\Omega}(x)\}.$$
Noting that for any $\xi\in\mathbb{H}^{n+1}$ there holds
$$\overline{\nabla}_{\mathbb{R}^{n+1}} \overline f_{\xi}(x)=\frac{x}{|x|}\xi_{n+2}-(\xi_1,\dots, \xi_{n+1}),$$
we complete the proof.
\end{proof}
Now, we introduce that any compact set in hyperbolic space can correspond to  a Radon measure valued $(0,2)$-tensor on $\mathbb{S}^n$. 
By Lemma \ref{convexitylemma}, we have that $\varphi_\Omega$ is a real valued semi-convex function on $\mathbb{S}^n$. Hence we have $D\varphi_\Omega\in BV_{loc}(\mathbb{S}^n)$ and we can verify the Hessian of $\varphi_\Omega$ as a Radon measure valued $(0,2)$-tensor field $[\overline{\nabla}^2\varphi_\Omega]$ on $\mathbb{S}^n$ \cite{Evansbook}. By the Radon-Nikodym  theorem, we have 
  $$[\overline{\nabla}^2\varphi_\Omega]=[\overline{\nabla}^2\varphi_\Omega]_{ac}+[\overline{\nabla}^2\varphi_\Omega]_s,$$
where $[\overline{\nabla}^2\varphi_\Omega]_{ac}$, $[\overline{\nabla}^2\varphi_\Omega]_s$ are respectively the absolutely continuous part and singular part of $[\overline{\nabla}^2\varphi_\Omega]$. By the semi-convexity of $\varphi_\Omega$, we have that $[\overline{\nabla}^2\varphi_\Omega]_s$ is nonnegative.
 Moreover, due to the semi-convexity of $\varphi_\Omega$, by Alexandrov's theorem we have for $\mathcal{L}^n$-a.e. $u\in \mathbb{S}^n$ the Hessian $\overline{\nabla}^2\varphi_\Omega$ exists at $u$ in the Alexandrov sense. By the approximate differentiability of  functions of bounded variation   \cite{Evansbook}, we have $$[\overline{\nabla}^2\varphi_\Omega]_{ac}=\overline{\nabla}^2\varphi_\Omega d\mathcal{L}^n.$$

By the argument above, we associate to each compact domain $\Omega\subset\mathbb H^{n+1}$  a Radon measure valued $(0,2)$-tensor field as 
\begin{align*}
    A[\varphi_\Omega]:=[\overline{\nabla}^2 \varphi_\Omega]+\left(\frac{\varphi_\Omega-\varphi_\Omega^{-1}}{2}-\frac{|\overline{\nabla}\varphi_\Omega|^2}{2\varphi_\Omega}\right)\overline{g} d\mathcal{L}^n:=A[\varphi_\Omega]_{ac}+A[\varphi_\Omega]_{s},
\end{align*}
where \begin{align*}
    A[\varphi_\Omega]_{ac}&=\left\{\overline{\nabla}^2 \varphi_\Omega+\left(\frac{\varphi_\Omega-\varphi_\Omega^{-1}}{2}-\frac{|\overline{\nabla}\varphi_\Omega|^2}{2\varphi_\Omega}\right)\overline{g}\right\} d\mathcal{L}^n,\\
    A[\varphi_\Omega]_{s}&=[\overline{\nabla}^2\varphi_\Omega]_s\geq 0.
\end{align*}
By the calculus in \cite[Proposition 5.1]{AndrewsChenWei2021}, we obtain the concavity of $A$.
\begin{lemma}\label{concaveofA}
    For positive semi-convex functions $\varphi_1, \varphi_2$ and $t\in[0,1]$, we have
    \begin{align*}
        A[t\varphi_1+(1-t)\varphi_2]=&tA[\varphi_1]+(1-t)A[\varphi_2]\\&+t(1-t)\frac{|\varphi_1\overline{\nabla}\varphi_2-\varphi_2\overline{\nabla}\varphi_1|^2+|\varphi_1-\varphi_2|^2}{2\varphi_1\varphi_2(t\varphi_1+(1-t)\varphi_2)}g d\mathcal{L}^n\\
        \geq& tA[\varphi_1]+(1-t)A[\varphi_2].
    \end{align*}
\end{lemma}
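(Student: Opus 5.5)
The identity is checked separately on the second-order part and on the lower-order part of $A$. The second-order part is linear in $\varphi$. The lower-order part is a function of $(\varphi,\overline{\nabla}\varphi)$ defined $\mathcal{L}^n$-a.e., and its defect from linearity is given by two elementary algebraic identities. Throughout, write $\varphi=t\varphi_1+(1-t)\varphi_2$; this is again a positive semi-convex function, so $A[\varphi]$ is defined as above.

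\textbf{Step 1 (Hessian part).} Since $\varphi_1,\varphi_2$ are semi-convex, they are locally Lipschitz and $D\varphi_i\in BV_{loc}(\mathbb{S}^n)$. The distributional Hessian is linear, so
\[
[\overline{\nabla}^2\varphi]=t[\overline{\nabla}^2\varphi_1]+(1-t)[\overline{\nabla}^2\varphi_2]
\]
as Radon-measure-valued tensors. This equality holds for the full measures, including the singular parts. Hence the $[\overline{\nabla}^2\cdot]$ terms cancel exactly, and the remaining terms are absolutely continuous with respect to $\mathcal{L}^n$.

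\textbf{Step 2 (lower-order part).} By Rademacher's theorem, $\overline{\nabla}\varphi_1$ and $\overline{\nabla}\varphi_2$ exist $\mathcal{L}^n$-a.e. At such points $\overline{\nabla}\varphi=t\overline{\nabla}\varphi_1+(1-t)\overline{\nabla}\varphi_2$. Set
\[
Q(\varphi)=\frac{\varphi-\varphi^{-1}}{2}-\frac{|\overline{\nabla}\varphi|^2}{2\varphi}.
\]
The term $\varphi/2$ is linear and cancels. Two pointwise identities then remain to be checked, each by clearing denominators and expanding. The first is
\[
\frac{t}{\varphi_1}+\frac{1-t}{\varphi_2}-\frac{1}{\varphi}=\frac{t(1-t)(\varphi_1-\varphi_2)^2}{\varphi_1\varphi_2\varphi}.
\]
The second, with $a_i=\overline{\nabla}\varphi_i$, is
\[
t\frac{|a_1|^2}{\varphi_1}+(1-t)\frac{|a_2|^2}{\varphi_2}-\frac{|ta_1+(1-t)a_2|^2}{\varphi}=\frac{t(1-t)|\varphi_1a_2-\varphi_2a_1|^2}{\varphi_1\varphi_2\varphi}.
\]
To verify the second identity, multiply through by $\varphi_1\varphi_2\varphi$. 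Expanding then gives
\[
t(1-t)\bigl(\varphi_2^2|a_1|^2+\varphi_1^2|a_2|^2-2\varphi_1\varphi_2\langle a_1,a_2\rangle\bigr),
\]
which is the right-hand side. The second identity is the joint convexity of the perspective function $(a,s)\mapsto |a|^2/s$. Both $-\frac{1}{2\varphi}$ and $-\frac{|\overline{\nabla}\varphi|^2}{2\varphi}$ enter $Q$ with a negative coefficient $\tfrac12$. Subtracting $tQ(\varphi_1)+(1-t)Q(\varphi_2)$ from $Q(\varphi)$ therefore leaves exactly
\[
\frac{t(1-t)\bigl(|\varphi_1\overline{\nabla}\varphi_2-\varphi_2\overline{\nabla}\varphi_1|^2+|\varphi_1-\varphi_2|^2\bigr)}{2\varphi_1\varphi_2\varphi}
\]
$\mathcal{L}^n$-a.e. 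Multiplying by $\overline{g}\,d\mathcal{L}^n$ and adding Step 1 gives the stated equality.

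\textbf{Step 3 (inequality).} The extra term is a nonnegative function times $\overline{g}\,d\mathcal{L}^n$, because $t\in[0,1]$ and $\varphi_1,\varphi_2,\varphi>0$. The inequality follows.

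The only point needing care is Step 1. One must make sure that the measure-valued Hessian of the combination splits linearly, both into its singular part and into its a.e.\ Alexandrov part, and that the lower-order coefficients are bounded measurable functions. This holds because the $\varphi_i$ are positive, continuous on the compact $\mathbb{S}^n$ (hence bounded below), and Lipschitz. So I do not expect a genuine obstacle; the rest is the algebra above, which mirrors \cite[Proposition 5.1]{AndrewsChenWei2021}.
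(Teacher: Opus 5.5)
Your proposal is correct and follows the same route as the paper, which gives no argument of its own beyond deferring to the computation in Andrews--Chen--Wei's Proposition 5.1. Your two algebraic identities are exactly that computation, and your Step 1 (linearity of the distributional Hessian including its singular part, together with a.e. differentiability of the Lipschitz $\varphi_i$) is the extra observation that transfers it to the measure-valued $A[\cdot]$ for semi-convex functions.
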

Moreover, we have that $A$ is nonnegative on $\mathcal{C}(\mathbb{H}^{n+1})$.
\begin{lemma}
    For any compact domain $\Omega\subset\mathbb H^{n+1}$, we have $$A[\varphi_\Omega]\geq 0.$$
\end{lemma}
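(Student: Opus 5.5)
The plan is to reduce the claim to the single-point case, which is formula \eqref{singlepoint}, and then pass to the supremum using the concavity in Lemma \ref{concaveofA}.

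First I rewrite $A$ in terms of $\log$-type quantities to see that $A$ is invariant under taking maxima. Concretely, for each $\xi\in\Omega$ the function $f_\xi$ is smooth and positive on $\mathbb{S}^n$, and \eqref{singlepoint} gives $A(f_\xi)=0$. Since $\varphi_\Omega=\sup_{\xi\in\Omega} f_\xi$ and $\Omega$ is compact, the idea is to show that $A[\cdot]$ of a supremum of functions with $A\ge 0$ is still $\ge 0$ as a measure. A clean route is the following.

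\textbf{Step 1 (finite maxima).} Reduce to finitely many points. Take a countable dense set $\{\xi_j\}\subset\Omega$ and set $\varphi_N=\max_{j\le N} f_{\xi_j}$. Then $\varphi_N\uparrow\varphi_\Omega$ uniformly on $\mathbb{S}^n$, by compactness and equicontinuity of $f_\xi$ in $\xi$. The family is uniformly semi-convex: $\overline{f_\xi}$ is convex on $\mathbb{R}^{n+1}$, so the restrictions to $\mathbb{S}^n$ have Hessians bounded below by a constant depending only on a bound for $|\xi|$.

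\textbf{Step 2 (smoothing the max).} Replace $\max$ by a smooth convex combination inside the sphere. Note that $A[\varphi]\ge0$ for smooth positive $\varphi$ is equivalent, after the substitution $\varphi=e^{u}$, to
\[
\overline\nabla^2 u+du\otimes du-\tfrac12|\overline\nabla u|^2\overline g+\tfrac12(1-e^{-2u})\overline g\ge0 .
\]
Rather than verify this for a soft-max directly, I use Lemma \ref{concaveofA}, together with the homogeneous convex extension $\overline{\varphi}$ of Lemma \ref{convexitylemma}. At a point where the maximum of two smooth functions $f_1,f_2$ is attained by both, the distributional Hessian of $\max(f_1,f_2)$ acquires a nonnegative singular part supported on $\{f_1=f_2\}$. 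Away from that set, $A[\max(f_1,f_2)]$ coincides locally with $A[f_1]$ or $A[f_2]$, which vanish. Hence $A[\varphi_N]=A[\varphi_N]_s\ge0$, because $A$ is local and the singular part of a maximum of smooth functions is nonnegative (the jump of the gradient across the interface points outward, by the standard one-dimensional kink computation). This uses that $\{f_{\xi_i}=f_{\xi_j}\}$ is a smooth hypersurface or all of $\mathbb{S}^n$. The latter happens only if $\xi_i=\xi_j$. The interface itself is a level set of $f_{\xi_i}/f_{\xi_j}$, i.e.\ of an affine expression, hence a small sphere.

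\textbf{Step 3 (limit).} Show $A[\varphi_N]\rightharpoonup A[\varphi_\Omega]$ weakly as measures. The Hessian terms converge distributionally from uniform convergence. For the zeroth order term, uniform semi-convexity gives $\overline\nabla\varphi_N\to\overline\nabla\varphi_\Omega$ a.e. with uniform Lipschitz bounds, so dominated convergence applies. Nonnegativity is preserved under weak limits when tested against nonnegative symmetric tensor fields, which gives $A[\varphi_\Omega]\ge0$.

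The main obstacle is Step 2, rigorously establishing that the singular part at the kinks is nonnegative and that nothing is lost on triple intersections. If that proves messy, the alternative is a purely weak argument via Lemma \ref{concaveofA}: test against smooth nonnegative $\eta\,\zeta\otimes\zeta$, and use the supporting property that at a.e. point $\varphi_\Omega$ touches some $f_\xi$ from above. Alexandrov second-order expansion at such a point then gives $\overline\nabla^2\varphi_\Omega\ge\overline\nabla^2 f_\xi$ with equal value and gradient, hence $A_{ac}\ge A(f_\xi)=0$. Nonnegativity of $A_s$ already follows from semi-convexity. This second route in fact looks shortest and I would adopt it as the main proof.
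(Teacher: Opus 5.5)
Your adopted argument (the ``second route'') is essentially the paper's proof: $A[\varphi_\Omega]_s\ge 0$ by semi-convexity, and at each Alexandrov point $u$ a point $p\in\Omega$ with $\varphi_{\{p\}}\le\varphi_\Omega$ and equality at $u$ forces equal gradients and $\overline{\nabla}^2\varphi_\Omega(u)\ge\overline{\nabla}^2\varphi_{\{p\}}(u)$, whence $A(\varphi_\Omega)(u)\ge A(\varphi_{\{p\}})(u)=0$ by \eqref{singlepoint}. Steps 1--3 can be dropped, because the nonnegativity of the singular part at the kinks that worried you in Step 2 already follows directly from semi-convexity.
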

\begin{proof}
    By the argument above, we have $A[\varphi_\Omega]_{s}\geq 0$. Now we consider the absolutely continuous part $A[\varphi_\Omega]_{ac}$. For any $u\in \mathbb{S}^n$ such that $\overline{\nabla}^2 \varphi_\Omega$ exists in the Alexandrov sense, by the compactness of $\Omega$ there exists $p\in\Omega$ such that $$\varphi_\Omega(u)=\varphi_{\{p\}}(u).$$
    By the definition of $\varphi_\Omega$ we have 
    \begin{align*}
        \varphi_\Omega(v)\geq\varphi_{\{p\}}(v), v\in\mathbb{S}^n.
    \end{align*}
       Hence we have 
    \begin{align*}
        \overline{\nabla}\varphi_\Omega(u)=&\overline{\nabla}\varphi_{\{p\}}(u),\\
        \overline{\nabla}^2\varphi_\Omega(u)\geq&\overline{\nabla}^2\varphi_{\{p\}}(u).
    \end{align*}
     By the calculations in \cite{LiXu2022}, there holds $$A[\varphi_{\{p\}}(v)]=0, v\in\mathbb{S}^n.$$
    Hence we have $A(\varphi_\Omega)(u)\geq A(\varphi_{\{p\}})(u)=0$, which implies 
    $$A[\varphi_\Omega]_{ac}\geq 0.$$
\end{proof}
Actually, for any $\Omega\in\mathcal{C}(\mathbb{H}^{n+1})$, $u_{\varphi_\Omega}:=\log \varphi_\Omega$ is a horospherical support function of the h-convex hull of $\Omega$.

\subsection{Convolution of $A$}
Now we consider the convolution properties of $A$. Denote by $\varphi_\Omega^m$ the convolution $f_m*\varphi_\Omega$. We claim the following property holds.
\begin{thm}\label{positiveofconvolution}
    For any $\Omega\in\mathcal{C}(\mathbb{H}^{n+1})$, we have 
    \begin{align*}
        A[\varphi_\Omega^m]\geq f_m*A[\varphi_\Omega]\geq 0.
    \end{align*}
\end{thm}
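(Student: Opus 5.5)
The plan is to exploit two facts: the operator $A$ commutes with rotations, and $A$ is concave along convex combinations (Lemma \ref{concaveofA}). The convolution $\varphi_\Omega^m=f_m*\varphi_\Omega$ is a continuous average of rotated copies of $\varphi_\Omega$, and concavity then pushes $A$ of the average above the average of $A$.

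First, for each $v\in SO(n+1)$ set $\varphi^v(u):=\varphi_\Omega(v^{-1}u)$. Since $\overline{g}$, $\overline{\nabla}$ and $\overline{\nabla}^2$ are $SO(n+1)$-invariant, we have $A[\varphi^v]=v_\#A[\varphi_\Omega]$ as Radon measure valued tensors, with the tensor transported by $v$. Moreover $\varphi^v=\varphi_{v\Omega}$, where $v$ acts on $\mathbb{H}^{n+1}$ by the corresponding isometry. Hence each $\varphi^v$ is positive and semi-convex with a uniform semi-convexity constant, and $A[\varphi^v]\geq 0$ by the preceding lemma.

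Second, I would generalize Lemma \ref{concaveofA} from two functions to finitely many by induction: for weights $t_i\geq0$ with $\sum t_i=1$,
\begin{align*}
A\Big[\sum_i t_i\varphi_i\Big]\geq\sum_i t_iA[\varphi_i].
\end{align*}
Next, approximate the integral $\int f_m(v)\varphi^v\,dv$ by Riemann sums $\psi_N=\sum_i t_i^N\varphi^{v_i}$ over partitions of $\mathrm{spt}(f_m)$. Uniform continuity of $\varphi_\Omega$ gives $\psi_N\to\varphi_\Omega^m$ uniformly. The same sums paired against a fixed smooth test tensor $\eta$ give
\begin{align*}
\sum_i t_i^N\langle A[\varphi^{v_i}],\eta\rangle\to\langle f_m*A[\varphi_\Omega],\eta\rangle,
\end{align*}
because $v\mapsto\langle A[\varphi^v],\eta\rangle=\langle A[\varphi_\Omega],v^*\eta\rangle$ is continuous. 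It remains to show $A[\psi_N]\to A[\varphi_\Omega^m]$ weakly. The Hessian term converges distributionally because $\psi_N\to\varphi_\Omega^m$ uniformly. The zeroth-order terms $\varphi-\varphi^{-1}$ converge uniformly, since the functions are bounded below by a positive constant.

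The gradient term $|\overline{\nabla}\psi_N|^2/\psi_N$ is the main obstacle, because it is nonlinear. I would control it using semi-convexity. The $\psi_N$ are uniformly semi-convex and uniformly Lipschitz, and they converge uniformly, so their gradients converge almost everywhere, which is the standard convergence of gradients for uniformly semiconvex functions. Dominated convergence then gives $L^1$ convergence of $|\overline{\nabla}\psi_N|^2/\psi_N$. Alternatively, I could avoid Riemann sums altogether: write $\overline{\nabla}\varphi_\Omega^m=\int f_m(v)\,\overline{\nabla}\varphi^v\,dv$ and apply the Cauchy--Schwarz (Jensen) inequality for the jointly convex function $(p,s)\mapsto|p|^2/s$, which gives directly
\begin{align*}
\frac{|\overline{\nabla}\varphi_\Omega^m|^2}{\varphi_\Omega^m}\leq\int f_m(v)\frac{|\overline{\nabla}\varphi^v|^2}{\varphi^v}\,dv.
\end{align*}
The $\varphi-\varphi^{-1}$ term is handled the same way, since $s\mapsto -s^{-1}$ is concave. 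The Hessian term is linear and commutes with the averaging. These three facts together yield $A[\varphi_\Omega^m]\geq f_m*A[\varphi_\Omega]$ pointwise as measures, and I would take this direct route as the primary argument. The final inequality $f_m*A[\varphi_\Omega]\geq0$ holds because it is an average of the nonnegative measures $A[\varphi^v]$.
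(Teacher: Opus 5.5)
Your direct route is correct and is essentially the paper's proof. The paper likewise splits $A=A_1+A_2$ and shows by integration by parts that the Hessian part commutes with the convolution. It then obtains $A_2[\varphi_\Omega^m]\geq f_m*A_2[\varphi_\Omega]$ pointwise by Jensen's inequality from Lemma \ref{concaveofA}, whose pointwise content is exactly your joint convexity of $(p,s)\mapsto|p|^2/s$ and concavity of $s\mapsto -s^{-1}$. Finally it gets $f_m*A[\varphi_\Omega]\geq 0$ as an average of nonnegative measures. The Riemann-sum detour is unnecessary, and your explicit pointwise Jensen step spells out what the paper states only briefly.
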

\begin{proof}
Let $\phi$ and $X$ be a smooth positive function and a smooth vector field on $\mathbb{S}^n$, respectively.  We prove that $$\langle A[\varphi_\Omega^m], X\otimes X\rangle\geq \langle f_m*A[\varphi_\Omega], X\otimes X\rangle\geq 0.$$

Denote \begin{align*}
    A_1[\varphi_\Omega]:=&[\overline{\nabla}^2 \varphi_\Omega],\\
A_2[\varphi_\Omega]:=&\left(\frac{\varphi_\Omega-\varphi_\Omega^{-1}}{2}-\frac{|\overline{\nabla}\varphi_\Omega|^2}{2\varphi_\Omega}\right)\overline{g} d\mathcal{L}^n.
\end{align*}

We first prove $\langle A_1[\varphi_\Omega^m], X\otimes X\rangle= \langle f_m*A_1[\varphi_\Omega], X\otimes X\rangle$.
\begin{align*}
    \langle f_m*A_1[\varphi_\Omega], X\otimes X\rangle=&\int_{SO(n+1)}f_m(v)\int_{\mathbb{S}^n} \overline{\nabla}^2\varphi_\Omega(vX\otimes vX) dv\\
    =&-\int_{SO(n+1)}f_m(v)\int_{\mathbb{S}^n}\overline{\nabla}_i(vX^ivX^j)(u) \overline{\nabla}_j\varphi_\Omega(u)dudv\\    =&\int_{SO(n+1)}f_m(v)\int_{\mathbb{S}^n}\overline{\nabla}_j\overline{\nabla}_i(vX^ivX^j)(u) \varphi_\Omega(u) du dv\\    =&\int_{SO(n+1)}f_m(v)\int_{\mathbb{S}^n}\overline{\nabla}_j\overline{\nabla}_i(X^iX^j)(u) \varphi_\Omega(v^{-1}u) du dv\\    
    =&\int_{\mathbb{S}^n}\overline{\nabla}_j\overline{\nabla}_i(X^iX^j)(u)\int_{SO(n+1)}f_m(v) \varphi_\Omega(v^{-1}u) dv du\\    
    =&\int_{\mathbb{S}^n}\overline{\nabla}_j\overline{\nabla}_i(X^iX^j)(u)f_m*\varphi_\Omega(u) du\\
    =&\int_{\mathbb{S}^n}(X^iX^j)(u)\overline{\nabla}_i\overline{\nabla}_j f_m*\varphi_\Omega(u) du\\
    =&\langle A_1[\varphi^m_\Omega], X\otimes X\rangle,
\end{align*}
where the second and third equalities hold by the regularity properties of $\varphi_\Omega$ that we mentioned above and the seventh equality holds by the smoothness of $f_m*\varphi_\Omega$.

Now we prove that $\langle A_2[\varphi_\Omega^m], X\otimes X\rangle\geq \langle f_m*A_2[\varphi_\Omega], X\otimes X\rangle$. 
By definition, we have 
\begin{align*}
    \langle A_2[\varphi_\Omega^m], X\otimes X\rangle=\int_{\mathbb{S}^n}\left(\frac{\varphi^m_\Omega-(\varphi_\Omega^m)^{-1}}{2}-\frac{|\overline{\nabla}\varphi^m_\Omega|^2}{2\varphi^m_\Omega}\right)(u)\overline{g}(X,X)(u)du
\end{align*}
and
\begin{align*}
    &\langle f_m*A_2[\varphi_\Omega], X\otimes X\rangle\\=&\int_{SO(n+1)}f_m(v)\int_{\mathbb{S}^n}\left(\frac{\varphi_\Omega-\varphi_\Omega^{-1}}{2}-\frac{|\overline{\nabla}\varphi_\Omega|^2}{2\varphi_\Omega}\right)(u)\overline{g}(vX,vX)(u)du dv\\
    =&\int_{SO(n+1)}f_m(v)\int_{\mathbb{S}^n}\left(\frac{\varphi_\Omega-\varphi_\Omega^{-1}}{2}-\frac{|\overline{\nabla}\varphi_\Omega|^2}{2\varphi_\Omega}\right)(v^{-1}u)\overline{g}(X,X)(u)du dv\\
    =&\int_{\mathbb{S}^n}\int_{SO(n+1)}f_m(v)\left(\frac{\varphi_\Omega-\varphi_\Omega^{-1}}{2}-\frac{|\overline{\nabla}\varphi_\Omega|^2}{2\varphi_\Omega}\right)(v^{-1}u)dv \overline{g}(X,X)(u) du
\end{align*}
By the calculations in Lemma \ref{concaveofA}, we have $A_2(t\varphi_1+(1-t)\varphi_2)\geq tA_2(\varphi_1)+(1-t)A_2(\varphi_2)$ for positive locally Lipschitz functions $\varphi_1$, $\varphi_2$ and $t\in[0,1]$.  Hence, by Jensen's inequality, we have $$ A_2(\varphi^m_{\Omega})\geq f_m *A_2(\varphi_\Omega),$$ i.e. 
\begin{align*}
    \left(\frac{\varphi^m_\Omega-(\varphi_\Omega^m)^{-1}}{2}-\frac{|\overline{\nabla}\varphi^m_\Omega|^2}{2\varphi^m_\Omega}\right)(u)\geq\int_{SO(n+1)}f_m(v)\left(\frac{\varphi_\Omega-\varphi_\Omega^{-1}}{2}-\frac{|\overline{\nabla}\varphi_\Omega|^2}{2\varphi_\Omega}\right)(v^{-1}u)dv.
\end{align*}
Hence we have $\langle A_2[\varphi_\Omega^m], X\otimes X\rangle\geq \langle f_m*A_2[\varphi_\Omega], X\otimes X\rangle$.

Noticing that 
\begin{align*}
    \langle f_m*A[\varphi_\Omega], X\otimes X\rangle=\int_{SO(n+1)}f_m(v)\int_{\mathbb{S}^n}vX\otimes vX d A[\varphi_\Omega]\geq 0,
\end{align*}we complete the proof.
\end{proof}
\subsection{Smooth approximation of h-convex bodies and more on horospherical support function}
By the arguments above, as in the argument in \cite{Schneider} we obtain the existence of smooth h-convex approximation for h-convex bodies.
\begin{proof}[Proof of Lemma \ref{smoothapproxiamate}] 
We first prove the existence of outer approximation.
Let $\Omega\in\mathcal{H}(\mathbb{H}^{n+1})$. For any $\varepsilon>0$, by Lemma \ref{outerhconvex} we have $\Omega^\varepsilon\in\mathcal{H}(\mathbb{H}^{n+1})$. 
Define $\varphi^{m,\varepsilon}_\Omega=f_m*\varphi_{\Omega^\epsilon}$. By Theorem \ref{positiveofconvolution}, we have that for any $\varepsilon>0$ and $m\in \mathbb{N}$, $\varphi^{m,\varepsilon}_\Omega$ is smooth and $A(\varphi^{m,\varepsilon}_\Omega)\geq 0$. For any $\eta>0$, inspired by the proof of \cite[Lemma 3.1]{LuoWei2025}, we have $A(\varphi^{m,\varepsilon}_\Omega+\eta)>0$. Hence, by the argument in \cite{AndrewsChenWei2021}, there exists a smooth h-convex domain $\Omega^{m,\varepsilon}_\eta$ such that $\varphi_{\Omega^{m,\varepsilon}_\eta}=\varphi^{m,\varepsilon}_\Omega+\eta$. By the properties of convolution we have $\varphi_{\Omega^{m,\varepsilon}_\eta}\rightarrow\varphi_{\Omega^\varepsilon}$ in the uniform topology as $m\rightarrow\infty$, $\eta\rightarrow 0^+$. Hence, by \cite[Corollary 3.2]{LiWanXv2024}, we have $\Omega^{m,\varepsilon}_\eta\rightarrow\Omega^\varepsilon$ in the  Hausdorff sense as $m\rightarrow\infty$, $\eta\rightarrow 0^+$. Then,  letting $\varepsilon\rightarrow 0^+$ and considering the smooth strictly h-convex approximation for $\Omega^\varepsilon$, we complete the proof.

For inner approximation, we first choose an interior point of $\Omega$ as the origin of the hyperbolic space in Minkowski space. 
Notice that the intersection of the $\varepsilon$ inner parallel sets of a class of horoballs is equal to the  $\varepsilon$ inner parallel set of the intersection of this class of horoballs. Hence, for $\varepsilon$ small enough, $\Omega_\varepsilon$ is h-convex. 
Hence, we can consider the smooth approximation of $\Omega_\varepsilon$. 
\end{proof}

For any $\Omega\in\mathcal{H}(\mathbb{H}^{n+1})$, by the semi-convexity of $\varphi_\Omega$ and the approximation result, we have that  the modified area measure can be expressed as
\begin{align}\label{areameasure}
    &\tilde{S}_0(\Omega,\beta)=\int_{\mathbb{S}^n\cap\beta}\varphi_\Omega^{-n}d\mathcal{L}^n\nonumber\\
    &\tilde{S}_1(\Omega,\beta)=\int_{\mathbb{S}^n\cap\beta}\varphi_\Omega^{-n+1}\mathrm{tr}(A[\varphi_\Omega]).
\end{align}
The smooth approximation Lemma \ref{smoothapproxiamate} immediately implies that the inequality parts in Theorem \ref{A-F} hold. Moreover, using Lemma \ref{smoothapproxiamate}, we give a characterization of horospherical support functions.
\begin{lemma}    
    For any semi-convex function $\varphi>0$ on $\mathbb{S}^{n}$, $\log\varphi$ is the horospherical support function of $\Omega_\varphi:=\{X\in\mathbb{H}^{n+1}: -\langle X, (\xi,1)\rangle_L\leq \varphi(\xi), \xi\in\mathbb{S}^n\}$ iff $A[\varphi]\geq 0$.
\end{lemma}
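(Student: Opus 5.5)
We prove the two directions separately. The forward direction is a direct consequence of the nonnegativity lemma above. The converse uses the convolution Theorem \ref{positiveofconvolution} together with the Andrews--Chen--Wei correspondence for smooth strictly h-convex bodies, followed by a Hausdorff limit.

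\emph{Necessity.} Suppose $\log\varphi$ is the horospherical support function of $\Omega_\varphi$.
\begin{itemize}
\item Then $\Omega_\varphi$ is a nonempty compact set: nonempty because $u_{\Omega_\varphi}$ is real valued, and compact because $-\langle X,(\xi,1)\rangle_L\le \max\varphi$ for all $\xi\in\mathbb S^n$ bounds $X$.
\item Moreover $\varphi=\varphi_{\Omega_\varphi}$.
\item The lemma just proved gives $A[\varphi_\Omega]\ge 0$ for every compact domain $\Omega$. Applied to $\Omega_\varphi$, this yields $A[\varphi]\ge0$.
\end{itemize}

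\emph{Sufficiency.} Assume $A[\varphi]\ge 0$.

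First we smooth $\varphi$. Set $\varphi^m=f_m*\varphi$. The proof of Theorem \ref{positiveofconvolution} only used that $\varphi_\Omega$ is positive and semi-convex: this is what justifies the integrations by parts for $A_1$, and the Jensen step uses the concavity of $A_2$ from Lemma \ref{concaveofA}. Hence the same argument gives $A[\varphi^m]\ge f_m*A[\varphi]\ge0$.

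Next we make the tensor strictly positive. Put $\delta_m:=\|\varphi^m-\varphi\|_{\infty}\to0$ and $\psi_m:=\varphi^m+\delta_m+\tfrac1m$. As in the proof of Lemma \ref{smoothapproxiamate} (the argument of \cite{LuoWei2025}), adding a positive constant gives $A(\psi_m)>0$. By \cite{AndrewsChenWei2021} there is a smooth strictly h-convex body $\Omega^m$ with $\varphi_{\Omega^m}=\psi_m$.

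Since a smooth strictly h-convex body is the intersection of its supporting horoballs, we have
\[\Omega^m=\{X:\ f_X(\xi)\le\psi_m(\xi)\ \forall \xi\}.\]
Because $\psi_m\ge\varphi$, this gives $\Omega_\varphi\subseteq\Omega^m$ for every $m$.

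We now pass to a limit. The $\Omega^m$ are uniformly bounded, since $\psi_m\le \max\varphi+2\delta_m+1$. Blaschke selection for compact sets then yields a subsequence converging in the Hausdorff sense to a compact set $K$. We claim $K=\Omega_\varphi$.
\begin{itemize}
\item \emph{$\Omega_\varphi\subseteq K$.} Every point of $\Omega_\varphi$ lies in all $\Omega^m$, hence in the Hausdorff limit $K$.
\item \emph{$K\subseteq\Omega_\varphi$.} Each $x\in K$ is a limit of points $x_m\in\Omega^m$ with $f_{x_m}\le\psi_m$. Since $f_x$ depends continuously on $x$ and $\psi_m\to\varphi$ uniformly, we get $f_x\le\varphi$, i.e. $x\in\Omega_\varphi$.
\end{itemize}

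Finally we identify the support function. The map $\Omega\mapsto\varphi_\Omega=\sup_{\xi\in\Omega}f_\xi$ is continuous from the Hausdorff topology to the uniform topology, because $f_\xi(e)$ is jointly continuous and Lipschitz in $\xi$ on bounded sets. Therefore
\[\varphi_{\Omega_\varphi}=\lim_m\varphi_{\Omega^m}=\lim_m\psi_m=\varphi.\]
Hence $\log\varphi$ is the horospherical support function of $\Omega_\varphi$.

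The step I expect to need the most care is extending Theorem \ref{positiveofconvolution} from $\varphi_\Omega$ to an arbitrary positive semi-convex $\varphi$. In particular, the distributional integration by parts against the measure $[\overline\nabla^2\varphi]$ and the pointwise Jensen inequality for $A_2$ must hold $\mathcal L^n$-a.e. using only local Lipschitz regularity. Both should go through verbatim, since semi-convexity was the only property of $\varphi_\Omega$ used there.
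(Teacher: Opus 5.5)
Your proof is correct and follows essentially the same route as the paper: smooth $\varphi$ from above by $f_m*\varphi$ plus a small positive constant (Theorem \ref{positiveofconvolution} indeed needs only positivity and semi-convexity), use the Andrews--Chen--Wei correspondence to get smooth strictly h-convex bodies containing $\Omega_\varphi$, and pass to a Hausdorff limit. The only difference is in the limit step: the paper uses a strictly decreasing nested sequence and a case split on the inner radius (Blaschke selection for h-convex bodies versus collapse to a point), whereas your two-inclusion identification $K=\Omega_\varphi$ together with continuity of $\Omega\mapsto\varphi_\Omega$ handles both cases at once and explicitly verifies $\varphi_{\Omega_\varphi}=\varphi$, which the paper leaves implicit.
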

\begin{proof}
By the arguments above we only need to prove that $A[\varphi]\geq 0$ implies  $\Omega_\varphi:=\{X\in\mathbb{H}^{n+1}: -\langle X, (\xi,1)\rangle_L\leq \varphi(\xi), \xi\in\mathbb{S}^n\}$ is an  h-convex body or a set of a single point.  

If $A[\varphi]\geq 0$, by the arguments above there exists a sequence of smooth functions $\varphi_j>\varphi_{j+1}>\varphi$ on $\mathbb{S}^{n}$ such that $A[\varphi_j]>0$ and $\varphi_j\rightarrow\varphi$ as $j\rightarrow \infty$. By \cite{AndrewsChenWei2021}, we have that $\Omega_{\varphi_j}$ is h-convex for any $j\in\mathbb{N}$ and $\Omega_{\varphi_j}\rightarrow\Omega_\varphi$ in the Hausdorff sense as $j\rightarrow\infty$.
 If $\Omega_{\varphi_j}$ have uniform lower bounds on the inner radius, by Theorem \ref{hausdorffofhconvex}, we have that $\Omega_\varphi$ is h-convex. 
 If the inner radius of $\Omega_{\varphi_j}$ tends to $0$ as $j\rightarrow\infty$, noticing that $\Omega_{\varphi_{j+1}}\subseteq\Omega_{\varphi_j}$, there exists $p\in \Omega_{\varphi_j}$. Moreover, we have $\varphi_{\{p\}}\leq \varphi_j$ for any $j\in\mathbb{N}$ and $\varphi_{\{p\}}+\eta>\varphi$ for any $\eta>0$, from which we have $$\Omega_\varphi:=\{X\in\mathbb{H}^{n+1}: -\langle X, (\xi,1)\rangle_L\leq \varphi(\xi), \xi\in\mathbb{S}^n\}=\{p\}.$$
\end{proof}

We next consider the smooth strictly h-convex approximation of a h-convex hypersurface $M=\partial\Omega$ of class $C^{1,1}$ in the sense of $C^{1,\alpha}\cap W^{2,p}$ for any $0\leq\alpha<1$ and $p\in \mathbb{N}$. This strong approximation will play a role in the proof of Theorem \ref{metricball}. If the h-convex body $\Omega$ satisfies \eqref{conditionsofrigisupp}, combining the strong approximation with the outer parallel construction, we obtain a sequence of  smooth strictly h-convex approximations such that the total variation norms of the signed measure associated with the proportionality relation tend to $0$.

\begin{lemma}\label{regularityofpar}
$M^t=\partial \Omega^t\rightarrow M$ in the sense of $C^{1,\alpha}\cap W^{2,p}$ as $t\rightarrow 0^+$.\end{lemma}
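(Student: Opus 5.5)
The plan is to realize every $M^t$ and $M$ as graphs over common coordinate patches and to reduce the convergence to a uniform $C^{1,1}$ bound plus an $L^p$ convergence of second derivatives. Here $M=\partial\Omega$ is a $C^{1,1}$ h-convex hypersurface, for instance the boundary of an outer parallel set as in the intended application.

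\emph{Step 1: graph representation with uniform $C^{1,1}$ bounds.} Fix an interior point $o$ of $\Omega$ and write $M$ and $M^t$ as radial graphs over $\mathbb{S}^n$ around $o$: $M=\{\exp_o(r_0(\theta)\theta)\}$ and $M^t=\{\exp_o(r_t(\theta)\theta)\}$. By \eqref{support} in Theorem \ref{hconvex}, $\langle\partial d_o,\nu\rangle\geq\tanh\rho_-(\Omega)>0$ on $M$ and on every $M^t$, since $\Omega\subseteq\Omega^t$ keeps an inner ball. This gives a uniform bound on $|\overline{\nabla} r_t|$.

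Because $M$ is $C^{1,1}$, the generalized principal curvatures satisfy $1\leq\kappa_i\leq K$ almost everywhere, and the unit normal bundle is a graph over $M$. Hence $M^t=\{F_t(x):=\exp_x(t\nu(x)) : x\in M\}$. By \eqref{waipingxingqvlv}, the principal curvatures of $M^t$ at $F_t(x)$ are
\[
\frac{\cosh t\,\kappa_i+\sinh t}{\sinh t\,\kappa_i+\cosh t}\in[1,K].
\]
Combined with the gradient bound, this yields $\|r_t\|_{C^{1,1}(\mathbb{S}^n)}\leq C$ uniformly for $t\in[0,1]$. Since $d_{\mathbb{H}^{n+1}}(\Omega^t,\Omega)=t$, we also have $r_t\to r_0$ uniformly.

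\emph{Step 2: $C^{1,\alpha}$ convergence.} By Arzelà--Ascoli, every sequence $t_j\to0$ has a subsequence along which $r_{t_j}$ converges in $C^{1,\beta}$ for all $\beta<1$. The limit must be $r_0$, so the whole family converges. Alternatively, one can interpolate directly: $\|r_t-r_0\|_{C^{1,\alpha}}\leq C\|r_t-r_0\|_{C^0}^{\theta}\|r_t-r_0\|_{C^{1,1}}^{1-\theta}$.

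\emph{Step 3: $W^{2,p}$ convergence, which I expect to be the main obstacle.} The difficulty is that the Hessians only exist almost everywhere, and the parametrizations of $M^t$ and $M$ differ by the bi-Lipschitz map $F_t$. I would proceed in three parts.
\begin{enumerate}
\item The bound $\|\overline{\nabla}^2 r_t\|_{L^\infty}\leq C$ together with $r_t\to r_0$ in $C^1$ gives $\overline{\nabla}^2 r_t\rightharpoonup\overline{\nabla}^2 r_0$ weakly in $L^p$.
\item Since $L^p$ is uniformly convex for $1<p<\infty$, it suffices to show $\|\overline{\nabla}^2 r_t\|_{L^p}\to\|\overline{\nabla}^2 r_0\|_{L^p}$. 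The case $p=1$ then follows on the compact sphere.
\item For the norm convergence, express $\overline{\nabla}^2 r_t$ algebraically in terms of $r_t$, $\overline{\nabla} r_t$, and the second fundamental form of $M^t$ in the radial chart. Then change variables via $x\mapsto F_t(x)$, using the area formula; $F_t$ is bi-Lipschitz with Jacobian converging to $1$ almost everywhere as $t\to0$. By \eqref{waipingxingqvlv}, the second fundamental form pulled back to $M$ converges almost everywhere to that of $M$. Since $F_t\to\mathrm{id}$ and the radial direction map converges, the $C^1$ data converge uniformly. Everything is uniformly bounded, so dominated convergence gives convergence of $\int|\overline{\nabla}^2 r_t|^p$.
\end{enumerate}

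Combining Steps 2 and 3 gives $M^t\to M$ in $C^{1,\alpha}\cap W^{2,p}$. If the eigenframe bookkeeping in the pullback in Step 3 becomes delicate, I would fall back on the tensorial identity $(\mathcal{W}^t-I)=(\mathcal{W}-I)\big((\sinh t)\,\mathcal{W}+\cosh t\big)^{-1}e^{-t}$ at corresponding points, written in parallel-transported frames. This avoids choosing eigenvectors.
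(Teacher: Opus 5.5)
Your proposal is correct. Steps 1 and 2 coincide with the paper's argument: radial graphs about an interior point, a gradient bound from the support inequality, a uniform $C^{1,1}$ bound from Kohlmann's formula for the parallel-surface curvatures, and interpolation for $C^{1,\alpha}$. One small slip: the lower bound is $\tanh d(o,\partial\Omega)$, which equals $\tanh\rho_-(\Omega)$ only if $o$ is the centre of an inner ball.

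The genuine difference is in the $W^{2,p}$ step.

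\emph{The paper's route.} It proves strong convergence directly. It writes the chart second fundamental form of $M^t$ as $h_t=(\theta_t^{-1})^*R^*\varphi_t^*A_t$, where $\varphi_t(x)=\exp_x(t\nu(x))$ is your $F_t$. It then shows that $\varphi_t^*A_t\to A$ in $L^\infty$, and that $\theta_t^{-1}\to\mathrm{Id}$ and $d\theta_t^{-1}\to\mathrm{Id}$ in $L^\infty$. The remaining term $(\theta_t^{-1})^*(R^*A)-R^*A$ is a continuity-of-translation issue for an $L^p$ tensor under near-identity bi-Lipschitz reparametrizations. The paper handles it by approximating $R^*A$ in $L^p$ by continuous tensors, together with a uniform bound on the pullback operator.

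\emph{Your route.} You get weak $L^p$ convergence of $\overline{\nabla}^2 r_t$ for free from the uniform Hessian bound and $C^1$ convergence. You then reduce strong convergence to convergence of norms, via the Radon--Riesz property of the uniformly convex space $L^p$ for $1<p<\infty$. For the norms you need only the area formula and dominated convergence. That in turn requires just a.e.\ pointwise convergence of the scalar integrand $|\overline{\nabla}^2 r_t|^p$ at corresponding points, plus convergence of the Jacobian of the reparametrization.

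\emph{Comparison.} Your route avoids the density argument entirely, since you never compare $\overline{\nabla}^2 r_0\circ\theta_t^{-1}$ with $\overline{\nabla}^2 r_0$ at the same point. The price is that it is purely qualitative and must treat $p=1$ afterwards via H\"older on the sphere. The paper's route works directly for every $p$ and makes the structure $h_t=(\theta_t^{-1})^*R^*\varphi_t^*A_t$ explicit. The geometric input is the same in both: the bi-Lipschitz map $x\mapsto\exp_x(t\nu(x))$, $d\theta_t\to\mathrm{Id}$ almost everywhere, and the curvature relation between $x$ and $\exp_x(t\nu(x))$.
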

\begin{proof}
Fix $o\in\mathring{\Omega}$ as the center. We consider the warped product construction of $\mathbb{H}^{n+1}$.

Suppose that $M=graph(\theta,\rho(\theta))$ and $M^t=(\theta,\rho_t(\theta))$.
   Firstly, by the Hausdorff convergence of $M_t$, we immediately obtain that $\rho_t\rightarrow \rho$ in the sense of $C^0$.

   By the h-convexity of $M$ and $M_t$, we have that $\langle \nu,\partial r\rangle$ has a uniform lower bound in $[0,t)$, from which we obtain the uniform upper bound on $|\overline{\nabla}\rho_t|$.
   
   By the calculation in \cite{Kohlmann1991}, we have that  the second fundamental form of $M^{t}$ is uniformly bounded in the sense of $\mathcal{L}^\infty$ on $[0,t_0)$, from which we have that $|\overline{\nabla^2}\rho_t|$ has a uniform bound in the  sense of $\mathcal{L}^\infty$  on $[0,t_0)$. Hence, by $\rho_t\in C^{1,1}$, we have that  $\rho_t$   has a uniform bound in the $C^{1,1}$ sense on $[0,T)$.

   By interpolation inequalities \cite[Lemma 6.35]{GT}, we have 
   $$|\rho_t-\rho|_{C^{1,\alpha}}\leq C(\varepsilon)|\rho_t-\rho|_{C^0}+\varepsilon |\rho_t-\rho|_{C^{1,1}}.$$

Now we try to prove that $\overline{\nabla}^2\rho_t\rightarrow \overline{\nabla}^2\rho$ in the  sense of $L^{p}$ as $t\rightarrow 0^+$. Actually, noticing that $\Omega^t$ is an outer parallel set of $\Omega$, we can calculate precisely.  Let $R(\theta)=\exp_0{(\rho(\theta)\theta)}$ and $R_t(\theta)=\exp_o(\rho_t(\theta)\theta)$.
Denote by $A, A_t$  the second fundamental forms  of $M, M_t$, respectively. Let $h=R^*A$ and $h_t=R_t^*A_t$.  By a direct calculation, we have 
\begin{align}
    \overline{\nabla}^2\rho_t-\overline{\nabla}^2\rho=o(1)-\sqrt{1+\frac{|\overline{\nabla}\rho_t|^2}{\sinh^2\rho_t}}h_t+\sqrt{1+\frac{|\overline{\nabla}\rho|^2}{\sinh^2\rho}}h.
\end{align}
Hence we need to estimate $|h_t-h|$.

Let $\varphi_t(x)=\exp_x{(t\nu(x))}$. By the calculation in \cite{Kohlmann1991}, we have 
\begin{align}\label{dphi}d\varphi_t(u)=P_{t\nu(x)}(\cosh t u+\sinh t DN(u)),\end{align}
where $P$ is parallel transport.  

The eigenvalue $k_i(t)$ of $A_t$ at $\varphi_t(x)$ is related to the eigenvalue $k_i$ of $A$ at $x$ by  
$$k_i(t)=\frac{\sinh t+k_i\cosh t}{\cosh t+k_i\sinh t}.$$

Choose the orthonormal frame $\{u_i\}$ of $T_xM$ such that $A(u_i,u_i)=k_i$. We have 
\begin{align*}\varphi_t^*A_t(u_i,u_j)&=(\cosh t+\sinh t k_i)(\sinh t+\cosh t k_i)\delta_{ij}\\
&=(\sinh t\cosh t+(\sinh^2t+\cosh^2t)k_i+\sinh t\cosh t k_i^2)\delta_{ij}
\end{align*}
Hence we have 
\begin{align*}
    \varphi_t^*A_t(u_i,u_j)-A(u_i,u_j)=(\cosh^2t-1)k_i\delta_{ij}+(\sinh t\cosh t+\sinh^2tk_i+\sinh t\cosh t k_i^2)\delta_{ij}.
\end{align*}
Hence we have $\varphi_t^* A_t\rightarrow A$ in the sense of $\mathcal{L}^\infty$.

Let $F_t(\theta)=\varphi_t(\exp_o(\rho(\theta)\theta))$. Then  $F_t$ is  bi-Lipschitz from $\mathbb{S}^n$ to $M_t$. Denote by  $\pi:\mathbb{H}^{n+1}\setminus\{o\}\rightarrow \mathbb{S}^{n}$ the direction projection, which is smooth. Define $\theta_t:\mathbb{S}^n\rightarrow \mathbb{S}^n$ as $\theta_t(\theta)=\pi(F_t(\theta))$. Hence we have that $\theta_t$ is Lipschitz and bijective.  Moreover, we have $\rho_t(\theta)=d(o,F_t(\theta_t^{-1}(\theta)))=d(o,R_t(\theta))$. 

Noticing that $R_t=F_t\circ\theta_t^{-1}$, hence we have that $h_t={\theta_t^{-1}}^*F_t^*A_t={\theta_t^{-1}}^*R^*\varphi_t^*A_t$. For any smooth vector field $X$ on $\mathbb{S}^n$, 
we have \begin{align*}
&(h_t-h)(X,X)(u)\\=&{\theta_t^{-1}}^*R^*\varphi_t^*A_t(X,X)(u)-R^*\varphi_t^*A_t(X,X)(u)+R^*\varphi_t^*A_t(X,X)(u)-h(X,X)(u)\\
=&{\theta_t^{-1}}^*R^*\varphi_t^*A_t(X,X)(u)-R^*\varphi_t^*A_t(X,X)(u)+R^*\varphi_t^*A_t(X,X)(u)-R^*A(X,X)(u)\\
=&R^*\varphi_t^*A_t|_{\theta_t^{-1}(u)}(d{\theta_t^{-1}}X(u),d{\theta_t^{-1}}X(u))-R^*\varphi_t^*A_t(X,X)(u)+R^*\varphi_t^*A_t(X,X)(u)\\&-R^*A(X,X)(u).
\end{align*}
To obtain the $L^p$ estimates of $h_t-h$, we only need to prove that $\theta_t^{-1}\rightarrow Id_{\mathbb{S}^n}$ and $d\theta_t^{-1}\rightarrow Id_{T\mathbb{S}^n}$ in  the  sense  of $\mathcal{L}^\infty$ under the Sasaki metric.

By the hyperbolic Cosine theorem, we can obtain that $\theta_t\rightarrow Id_{\mathbb{S}^n}$ as $t\rightarrow 0^+$ in the $C^0$ sense immediately. Hence we have $\theta_t^{-1}\rightarrow Id_{\mathbb{S}^n}$ as $t\rightarrow 0^+$ in the $C^0$ sense.

By \eqref{dphi} and the $C^{1,1}$ regularity of $R$, we have 
\begin{align*}
    &d\theta_t(X)(u)=d\pi\circ d\varphi_t(dR(X))=d\pi( P_{t\nu(\Pi(dR(X)))}(\cosh t dR(X)+\sinh tDN(dR(X)))\\
    \rightarrow &d\pi(dR(X))=X(u)
\end{align*} in the sense of $\mathcal{L}^\infty$ under the Sasaki metric.

By the uniform bound of the principal curvature, we have $d\varphi_t(x)\rightarrow Id_{T_xM}$ in the sense of $\mathcal{L}^\infty$ under the Sasaki metric. 

 Hence, we have 
$d\theta_t=d\pi\circ d\varphi_t\circ dR\rightarrow d\pi\circ d R=Id_{T\mathbb{S}^n}$ in the sense of $\mathcal{L}^\infty$. Hence we have $d\theta_t^{-1}\rightarrow Id_{T\mathbb{S}^n}$ in the sense of $\mathcal{L}^\infty$ under the Sasaki metric. In the following we assume that for any  $(0,2)$-tensor $T\in \mathcal{L}^\infty(\mathbb{S}^n)$ there holds $|{\theta^{-1}_t}^*T|_{\mathcal{L}^\infty}\leq 2|T|_{\mathcal{L}^\infty}$  without loss of generality.

Noticing  that  $C^0(\mathbb{S}^n)$  is dense in $L^p(\mathbb{S}^n)$, let $\{R_{m}\}$ be a sequence of continuous $(0,2)$-tensors on $\mathbb{S}^n$ such that  $R_{m}\rightarrow R^*A$ in the sense of $L^p$ as $m\rightarrow\infty$.

 By the previous argument we have $d\varphi_t(x)\rightarrow Id_{T_xM}$ in the sense of $\mathcal{L}^\infty$. Hence, for any smooth vector field $X$ on $\mathbb{S}^n$ and $t>0$, we have 
\begin{align*}&||{\theta_t^{-1}}^*R^*\varphi_t^*A_t(X,X)-R^*\varphi_t^*A_t(X,X)||_{L^p}\\
\leq& (\int_{\mathbb{S}^n}|{\theta_t^{-1}}^*(R^*\varphi_t^*A_t-R^*A)(X,X))|^p du)^\frac{1}{p}\\
&+(\int_{\mathbb{S}^n}|{\theta_t^{-1}}^*(R^*A(X,X)-R_{m}(X,X))|^p du)^\frac{1}{p}\\
&+(\int_{\mathbb{S}^n}|{\theta_t^{-1}}^*R_{m}(X,X)-R_{m}(X,X)|^p du)^\frac{1}{p}\\
&+(\int_{\mathbb{S}^n}|(R_{m}(X,X)-R^*\varphi_t^*A_t(X,X))|^p du)^\frac{1}{p}.
\end{align*}
Hence we have \begin{align*}
    &\limsup_{t\rightarrow 0}||{\theta_t^{-1}}^*R^*\varphi_t^*A_t(X,X)-R^*\varphi_t^*A_t(X,X)||_{L^p}\\
    \leq& 2(\int_{\mathbb{S}^n}|(R^*A(X,X)-R_{m}(X,X))|^p du)^\frac{1}{p}.
\end{align*}
Letting $m\rightarrow \infty$, we complete our proof.
\end{proof}

\begin{lemma}\label{higherregularapproximation}
    For any $\Omega\in\mathcal{H}(\mathbb{H}^{n+1})$ with $M=\partial\Omega$ of class $C^{1,1}$, $0<\alpha< 1$ and $p\in\mathbb{N}$, there exists a sequence of smooth closed strictly h-convex hypersurfaces $\{M_i\}_{i\in\mathbb{N}}$ such that $M_i\rightarrow M$ in the sense of $C^{1,\alpha}\cap W^{2,p}$.
\end{lemma}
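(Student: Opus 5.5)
The plan is a two-stage approximation followed by a diagonal argument. First smooth $M$ while keeping h-convexity, with control in $C^{1,\alpha}\cap W^{2,p}$; then push the smoothed hypersurfaces slightly outward to make them strictly h-convex.

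\textbf{Step 1: mollify the radial graph.} Fix $o\in\mathring{\Omega}$ and write $M=\mathrm{graph}(\theta,\rho(\theta))$ in the warped product picture, as in the proof of Lemma \ref{regularityofpar}. Since $M$ is $C^{1,1}$, we have $\rho\in C^{1,1}(\mathbb{S}^n)=W^{2,\infty}(\mathbb{S}^n)$. Put $\rho_m=f_m*\rho$, using the spherical convolution of the preliminaries. Then:
\begin{itemize}
\item $\rho_m$ is smooth and $\rho_m\to\rho$ in $C^{1,\alpha}$, since $f_m*$ converges uniformly on the continuous function $\nabla\rho$, the $C^{1,1}$ norms are uniformly bounded, and the interpolation inequality \cite[Lemma 6.35]{GT} applies.
\item $\overline{\nabla}^2\rho_m\to\overline{\nabla}^2\rho$ in $L^p$, because $f_m*$ approximates the identity on $L^p$; the commutator between rotation and covariant differentiation is lower order and converges.
\end{itemize}
Consequently the second fundamental forms $h_m$ of $M_m=\mathrm{graph}\,\rho_m$ satisfy $h_m\to h$ in $L^p$, because $h$ is an algebraic expression in $\rho,\overline{\nabla}\rho,\overline{\nabla}^2\rho$ that is smooth in the first two arguments and linear in the third.

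\textbf{Step 2 (main obstacle): restore h-convexity.} The mollified graph need not be h-convex, since $\nabla$-convolution does not commute with the nonlinear condition $h\ge g$. I would try to avoid this at the source. First replace $\Omega$ by $\Omega^t$, which has the same $C^{1,\alpha}\cap W^{2,p}$ limit by Lemma \ref{regularityofpar}. Then use the horospherical support function instead of $\rho$. For $\Omega^t$, formula \eqref{waipingxingqvlv} gives principal curvatures $\le\coth t$, so $\Omega^t$ has bounded curvature. Since $\partial\Omega^t$ is $C^{1,1}$ with $\kappa_i\ge1$, the horospherical Gauss map is Lipschitz, and $\varphi_{\Omega^t}$ is a $C^{1,1}$ function with $A[\varphi_{\Omega^t}]$ absolutely continuous.

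Set $\varphi^{m}=f_m*\varphi_{\Omega^t}+\eta_m$. By Theorem \ref{positiveofconvolution} and the argument of Lemma \ref{smoothapproxiamate}, $A[\varphi^m]>0$, so $\varphi^m$ defines a smooth strictly h-convex body $\Omega_m$. What remains is to convert convergence of $\varphi^m$ into convergence of the hypersurfaces:
\begin{itemize}
\item $\varphi^m\to\varphi_{\Omega^t}$ in $C^{1,\alpha}\cap W^{2,p}$ by the same mollifier argument as in Step 1.
\item The Andrews--Chen--Wei parametrization expresses $X(\xi)$ through $\varphi,\overline{\nabla}\varphi$ only, so the parametrizations converge in $C^{0,\alpha}$ and the unit normals converge.
\item The Weingarten map is $(\mathcal W-I)^{-1}=\varphi A[\varphi]$.
\end{itemize}
The delicate point is bounding $\mathcal W$ itself, which requires $A[\varphi^m]$ to be uniformly bounded below. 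I would get this from $A[\varphi_{\Omega^t}]\ge c(t)>0$: by \eqref{waipingxingqvlv}, $\kappa_i-1\le\coth t-1$, which gives the lower bound on $A$. Convolution preserves this bound, again by Theorem \ref{positiveofconvolution}. With the lower bound in hand, $\mathcal W_m\to\mathcal W$ in $L^p$ by dominated convergence. Finally, re-expressing everything as radial graphs over $o$ turns this into $C^{1,\alpha}\cap W^{2,p}$ convergence of the $\rho$'s, by the change-of-variables argument with $\theta_t$ used in Lemma \ref{regularityofpar}.

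\textbf{Step 3: diagonalize.} Choose $t_i\to0$ and, for each $i$, choose $m_i$ so that the surface from Step 2 is within $1/i$ of $M^{t_i}$. Lemma \ref{regularityofpar} gives $M^{t_i}\to M$, so these surfaces give the required sequence $M_i$.
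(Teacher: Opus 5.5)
The gap is in Step 2, at the claim that $\varphi_{\Omega^t}$ is $C^{1,1}$ with $A[\varphi_{\Omega^t}]$ absolutely continuous. The curvature bound you use points the wrong way.

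Since $(\mathcal W-I)^{-1}=\varphi A[\varphi]$, an upper bound on $\overline\nabla^2\varphi$ is the same as a positive lower bound on $\mathcal W-I$. Likewise, $\overline\nabla\varphi$ is Lipschitz exactly when the \emph{inverse} of the horospherical Gauss map, $\xi\mapsto X(\xi)$, is single-valued and Lipschitz. The bound $\kappa_i\le\coth t$ controls only the Gauss map itself; that is, it gives the lower bound $A[\varphi_{\Omega^t}]\ge c(t)>0$, which you correctly use later.

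Passing to $\Omega^t$ does not improve the regularity of the support function at all. One has $u_{\Omega^t}=u_\Omega+t$, so $\varphi_{\Omega^t}=e^t\varphi_\Omega$, as is implicitly used in the proof of Theorem \ref{arearigidity}. Equivalently, $\kappa_i(t)-1=e^{-t}(\kappa_i-1)/(\cosh t+\sinh t\,\kappa_i)$ still vanishes wherever $\kappa_i=1$.

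Here is a concrete counterexample. Let $K$ be the intersection of a geodesic ball with a horoball whose boundary horosphere cuts through it, and set $\Omega=K^s$ with $s>0$. Then $\Omega$ is h-convex by Lemma \ref{outerhconvex} and has $C^{1,1}$ boundary, so it satisfies the hypotheses. However, $\partial\Omega$ contains an open piece of a horosphere centered at some $\xi_0\in\mathbb S^n$, so the supporting horosphere centered at $\xi_0$ touches $\partial\Omega$ along an open set. By the lemma characterizing strict h-convexity in the sense of supporting horospheres, $\varphi_\Omega$, and hence $\varphi_{\Omega^t}$, is not even $C^1$ at $\xi_0$.

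This breaks the rest of Step 2:
\begin{itemize}
\item $f_m*\varphi_{\Omega^t}\to\varphi_{\Omega^t}$ in $C^{1,\alpha}\cap W^{2,p}$ is false.
\item $X(\xi)$ is multivalued at $\xi_0$.
\item Your dominated-convergence argument for $\mathcal W_m$ on $\mathbb S^n$ says nothing about the open horospherical piece of $M$, which the Gauss map collapses to the null set $\{\xi_0\}$. So the change of variables back to radial graphs fails.
\end{itemize}
Even without such a piece, wherever $\kappa_i\to1$ the Jacobian of $\xi\mapsto X(\xi)$ degenerates, and $L^p(\mathbb S^n)$ control does not transfer to $L^p(M)$.

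What is missing is a preliminary step that produces \emph{uniform strict} h-convexity, $\kappa_i\ge1+\delta$, at small cost in $C^{1,\alpha}\cap W^{2,p}$. The paper gets this from Euclidean inner parallel hypersurfaces in the Poincar\'e ball model. For these, $k^{\mathbb H}_{i,\varepsilon}-k^{\mathbb H}_i\ge\varepsilon^2k^{\mathbb R}_i/\bigl(2(1-\varepsilon k^{\mathbb R}_i)\bigr)$, and $k^{\mathbb R}_i$ is bounded below by h-convexity.

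Once this margin is available, the obstacle you identified in Step 2 disappears, and your Step 1 alone finishes the proof; this is the paper's argument. The condition $h>(1+\delta)g$ reads $-\overline\nabla^2\rho>G(\rho,\overline\nabla\rho)$ with $G$ continuous, which is \emph{linear} in the Hessian. Since $SO(n+1)$ acts by isometries, $\overline\nabla^2(f_m*\rho)=f_m*\overline\nabla^2\rho$ holds exactly, with no commutator. The uniformly small errors coming from $\rho_m\to\rho$ and $\overline\nabla\rho_m\to\overline\nabla\rho$ are then absorbed by $\delta$.

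With $1+\delta\le\kappa_i\le C$, your support-function route would also become workable, since $A[\varphi]$ is then bounded above and below.
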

\begin{proof}
Step 1: Consider the Poincare ball model. By \cite[Theorem 1.3]{Scholtes13}, we have that $M$ is of positive reach. Hence for $\varepsilon$ small enough the Euclidean inner parallel hypersurface $M_\varepsilon$ is $C^{1,1}$. And by a direct calculation we have that $M_\varepsilon$ is uniformly strictly h-convex. By an argument similar to that in Lemma \ref{regularityofpar}, we have that   $M_\varepsilon\rightarrow M$ in the sense of $C^{1,\alpha}\cap W^{2,p}$.

More precisely, we 
consider the following approximation. Let $x\in M$ such that $\nu^H(x)$ is differentiable at $x$, where $\nu^H$ is the unit outer normal vector of $M$ at $x$. Let $\nu=\frac{2}{1-|x|^2}\nu^H$.
 Noticing the metrics are conformal, by the regularity of $M$, we have
$$k_i^\mathbb{H}(x)= \frac{1-|x|^2}{2}k_i^\mathbb{R}+\langle x,\nu\rangle.$$
Let $k_{i,\varepsilon}^\mathbb{R}$ and $k_{i,\varepsilon}^\mathbb{H}(x)$ be the Euclidean and hyperbolic principal curvatures of $M_\varepsilon$ respectively.
By the Riccati equation, we have 
$$k_{i,\varepsilon}^\mathbb{R}(x)=\frac{k_i^\mathbb{R}(x)}{1-\varepsilon k_i^\mathbb{R}(x)}.$$
Hence we have 
$$ k_{i,\varepsilon}^\mathbb{H}(x)-k_i^\mathbb{H}(x)=\frac{\varepsilon(1-|x|^2){k_i^{\mathbb{R}}}^2(x)+(2\varepsilon\langle x,\nu\rangle+\varepsilon^2)k_i^\mathbb{R}(x)-2\varepsilon}{2(1-\varepsilon k_i^\mathbb{R}(x))}.$$
By the h-convexity of $M$, we have 
$$(1-|x|^2)k_i^\mathbb{R}(x)\geq 2(1-\langle x,\nu\rangle)>C(M)>0.$$
Hence we have 
$$ k_{i,\varepsilon}^\mathbb{H}(x)-k_i^\mathbb{H}(x)\geq \frac{\varepsilon^2k^\mathbb{R}_i(x)}{2(1-\varepsilon k_i^\mathbb{R}(x))}.$$
By choosing $\varepsilon$ small enough, we complete step 1.

Step 2: By the argument above we assume that  $M$ is uniformly strictly h-convex. Consider the radial graph  representation $(\theta, \rho(\theta))$ of $M$. Consider the spherical convolution $\rho_m$. $M_m=graph(\theta,\rho_m(\theta))$ is smooth. We have $\rho_m\rightarrow\rho$ in the  sense of $C^{1,\alpha} \cap W^{2,p}$. Hence $M_m$ converges to $M$  in the sense of $C^{1,\alpha} \cap W^{2,p}$.

Step 3: For $m$ large enough, we have that $M_m$ is  strictly h-convex. 
More precisely, we can consider $\overline{\nabla}^2\rho$ as a Radon measure on $\mathbb{S}^n$ and absolutely continuous with respect to $d \mathcal{L}^n$, since $\rho\in C^{1,1}$.  

By the $C^{1,1}$ regularity of $M$, we have
$$  h_{i j}=\frac{1}{\sqrt{1+\frac{|\overline{\nabla}\rho|^2}{\sinh ^2\rho}}}\left(-\rho_{i j}+\sinh \rho \cosh \rho \sigma_{i j}+2\mathrm{coth} \rho \rho_i \rho_j\right)  $$
and 
$$g_{i j}=\rho_i \rho_j+\sinh ^2\rho \sigma_{i j}.$$

Hence we have 
\begin{align}\label{equihconvex}
    &h>(1+\delta) g\iff \nonumber\\
    &-\rho_{ij}>-\sinh \rho \cosh \rho \sigma_{i j}-2\mathrm{coth} \rho \rho_i \rho_j+(1+\delta)\sqrt{1+\frac{|\overline{\nabla}\rho|^2}{\sinh ^2\rho}}\left(\rho_i \rho_j+\sinh ^2\rho \sigma_{i j}\right).
\end{align}
By the $C^{1,1}$ regularity of $\rho$, we have $\overline{\nabla}(f_m*\rho)=f_m*\overline{\nabla}\rho$ and $\overline{\nabla}^2 (f_m*\rho)=f_m*\overline{\nabla}^2(\rho)$.

Hence, for any $\delta<C(M)$, let $C'(M)=\frac{C(M)+\delta}{2}$. We have 
\begin{align*}
    &\langle-\overline{\nabla}^2\rho^m,X\otimes X\rangle=\langle-f_m*\overline{\nabla}^2\rho,X\otimes X\rangle=-\int_{SO(n+1)} f_m(v)
    \int_{\mathbb{S}^n}\overline{\nabla}^2\rho(vX\otimes vX)d v\\
    >&\int_{SO(n+1)} f_m(v)\int_{\mathbb{S}^n}vX^ivX^j(u)\\
    &\left(-\sinh \rho \cosh \rho \sigma_{i j}-2\mathrm{coth} \rho \rho_i \rho_j+(1+C(M))\sqrt{1+\frac{|\overline{\nabla}\rho|^2}{\sinh ^2\rho}}\left(\rho_i \rho_j+\sinh ^2\rho \sigma_{i j}\right)\right)(u)dudv\\
   \rightarrow& \int_{\mathbb{S}^n}X^iX^j(u)\\
   &\left(-\sinh \rho \cosh \rho \sigma_{i j}
   -2\mathrm{coth} \rho \rho_i \rho_j+(1+C(M))\sqrt{1+\frac{|\overline{\nabla}\rho|^2}{\sinh ^2\rho}}\left(\rho_i \rho_j+\sinh ^2\rho \sigma_{i j}\right)\right)(u)du,\\& m\rightarrow\infty.
\end{align*}

Moreover, we have
 $\rho^m\rightarrow \rho$ and $\nabla \rho^m\rightarrow \nabla\rho$ uniformly as $m\rightarrow \infty$.
 Hence, by the smoothness of $\rho^m$, we have  
 \begin{align*}
     &-\overline{\nabla}^2\rho^m(X\otimes X)(u)\\
     \geq &\left(-\sinh \rho^m \cosh \rho^m \sigma_{i j}-2\mathrm{coth} \rho^m \rho^m_i \rho^m_j+(1+C'(M))\sqrt{1+\frac{|\overline{\nabla}\rho^m|^2}{\sinh ^2\rho^m}}\left(\rho^m_i \rho^m_j+\sinh ^2\rho^m \sigma_{i j}\right)\right)\\&X^iX^j(u)\\
     >&\left(-\sinh \rho^m \cosh \rho^m \sigma_{i j}-2\mathrm{coth} \rho^m \rho^m_i \rho^m_j+(1+\delta)\sqrt{1+\frac{|\overline{\nabla}\rho^m|^2}{\sinh ^2\rho^m}}\left(\rho^m_i \rho^m_j+\sinh ^2\rho^m \sigma_{i j}\right)\right)\\&X^iX^j(u)
 \end{align*} for $m$ large enough, 
 which completes the proof.
\end{proof}
We remark that Lemma \ref{higherregularapproximation} also yields  Lemma \ref{smoothapproxiamate} by considering the suitable parallel sets. We retain the proof in Section \ref{smoothsection}, as this approach also gives more details on the horospherical support functions.

\section{Stability for the Alexandrov-Fenchel inequality}\label{stabilityforaf}
In this section, we derive the Fraenkel asymmetry stability independent of curvature bounds described by the Alexandrov-Fenchel deficits for h-convex bodies.

\begin{proof}[Proof of Theorem \ref{stability}]
By Lemma \ref{smoothapproxiamate}, the inequality in Theorem \ref{A-F} holds.
Denote $\tilde{D}_{k,l}(\Omega)=\tilde{W}_k(\Omega)-\tilde{f}_k(\tilde{f}^{-1}_l(\tilde{W_l}(\Omega))$.
We only need to prove $CD_{k,l}\geq  \tilde{D}_{1,0}$. Let $r_i=f_i^{-1}(W_i(\Omega))$ and $\tilde{r}_i=\tilde{f}^{-1}_i(\tilde{W}_i(\Omega))$. If $l=0$, the proposition is trivial. In the following we assume that $1\leq l$.  By the inequality in Theorem \ref{A-F}, we have that $r_i$ and $\tilde{r}_i$ are increasing with respect to $i$. First, we have 
\begin{align*}
D_{k,l}(\Omega)&=\sum_{i=0}^k \binom{k}{i}\tilde{W}_i(\Omega)-\sum_{i=0}^k \binom{k}{i}\tilde{f}_i(r_0)+\sum_{i=0}^k \binom{k}{i}(\tilde{f}_i(r_0)-\tilde{f}_i(r_l))\\
&=\sum_{i=0}^k \binom{k}{i}\tilde{D}_{i,0}(\Omega)+\sum_{i=0}^k \binom{k}{i}(\tilde{f}_i(r_0)-\tilde{f}_i(r_l)).   
\end{align*}
Hence we have 
\begin{align*}
\tilde{D}_{1,0}(\Omega)\leq \sum_{i=0}^k \binom{k}{i}\tilde{D}_{i,0}(\Omega)= D_{k,l}(\Omega)+\sum_{i=0}^k \binom{k}{i}(\tilde{f}_i(r_l)-\tilde{f}_i(r_0)).   
\end{align*}
By a direct calculation (as in \cite{Zhang2026,LiXu2022}), we have 
\begin{align}\label{expressf}
   &f_i(r)=\omega_n\int_0^r \sinh^{n-i}t\cosh^i(t) dt\nonumber\\
   &\tilde{f}_i(r)=\omega_n\int_0^r \sinh^{n-i} t \,e^{-it} dt.
\end{align} 
Noticing that, by \eqref{expressf}, $\tilde{f}_i(r_l)-\tilde{f}_i(r_0)\leq C(i,\rho)(\tilde{f}_0(r_l)-\tilde{f}_0(r_0))$, hence we only need to estimate $\tilde{f}_0(r_l)-\tilde{f}_0(r_0)$ by $D_{k,l}(\Omega)$.

Consider \begin{align*}
    D_{k,l}(\Omega)=&\sum\limits_{i=0}^k \binom{k}{i}\tilde{W}_i(\Omega)-f_k(r_l)\\
    =&\sum\limits_{i=0}^k \binom{k}{i}\left(\tilde{W}_i(\Omega)-\tilde{f}_i(r_l)\right).
\end{align*}
Moreover, we have 
\begin{align*}
    0=W_l(\Omega)-f_l(r_l)=\sum\limits_{i=0}^l\binom{l}{i}\left(\tilde{W}_i(\Omega)-\tilde{f}_i(r_l)\right)=\sum\limits_{i=0}^l\binom{l}{i}\left(\tilde{f}_i(\tilde{r}_i)-\tilde{f}_i(r_l)\right),
\end{align*}
from which we have that there exists  $1\leq a\leq l$ such that $\tilde r_a\geq r_l\geq \tilde{r}_{a-1}$.

Hence we have 
\begin{align*}
    D_{k,l}(\Omega)=&\sum\limits_{i=0}^k \binom{k}{i}\tilde{W}_i(\Omega)-f_k(r_l)\\
    =&\sum\limits_{i=0}^l\binom{k}{i}\left(\tilde{W}_i(\Omega)-\tilde{f}_i(r_l)\right)+\sum\limits_{i=l+1}^k\binom{k}{i}\left(\tilde{W}_i(\Omega)-\tilde{f}_i(r_l)\right)\\
    \geq&\sum\limits_{i=0}^l\binom{k}{i}\left(\tilde{W}_i(\Omega)-\tilde{f}_i(r_l)\right).
\end{align*}

We next prove that \begin{align*}
   0 \leq &\sum\limits_{i=0}^l\binom{k}{i}\left(\tilde{W}_i(\Omega)-\tilde{f}_i(r_l)\right),
\end{align*} 
i.e., 
\begin{align*}
    \sum\limits_{i=0}^{a-1}\binom{k}{i}\left(\tilde{f}_i(r_l)-\tilde{W}_i(\Omega)\right) \leq \sum\limits_{i=a}^l\binom{k}{i}\left(\tilde{W}_i(\Omega)-\tilde{f}_i(r_l)\right)
\end{align*}
Noting that $\frac{\binom{k}{i}}{\binom{l}{i}}$ is strictly increasing with respect to $i$, hence we have 
\begin{align*}
    &\sum\limits_{i=0}^{a-1}\binom{k}{i}\left(\tilde{f}_i(r_l)-\tilde{W}_i(\Omega)\right)\\ \leq& \binom{k}{a-1}\binom{l}{a-1}^{-1} \sum\limits_{i=0}^{a-1}\binom{l}{i}\left(\tilde{f}_i(r_l)-\tilde{W}_i(\Omega)\right)\\=&\binom{k}{a-1}\binom{l}{a-1}^{-1} \sum\limits_{i=a}^l\binom{l}{i}\left(\tilde{W}_i(\Omega)-\tilde{f}_i(r_l)\right) \\\leq& \binom{k}{a}\binom{l}{a}^{-1}\sum\limits_{i=a}^l\binom{l}{i}\left(\tilde{W}_i(\Omega)-\tilde{f}_i(r_l)\right)
    \\ \leq& \sum\limits_{i=a}^l\binom{k}{i}\left(\tilde{W}_i(\Omega)-\tilde{f}_i(r_l)\right).
\end{align*}
Moreover, by the inequality above, we have
\begin{align}
    &D_{k,l}(\Omega)\geq\sum\limits_{i=0}^l\binom{k}{i}\left(\tilde{W}_i(\Omega)-\tilde{f}_i(r_l)\right)\nonumber
    \\\geq& \binom{k}{a}\binom{l}{a}^{-1}\sum\limits_{i=a}^l\binom{l}{i}\left(\tilde{W}_i(\Omega)-\tilde{f}_i(r_l)\right)- \binom{k}{a-1}\binom{l}{a-1}^{-1} \sum\limits_{i=0}^{a-1}\binom{l}{i}\left(\tilde{f}_i(r_l)-\tilde{W}_i(\Omega)\right)\nonumber
    \\\geq&\left(\binom{k}{a}\binom{l}{a}^{-1}-\binom{k}{a-1}\binom{l}{a-1}^{-1} \right) \sum\limits_{i=0}^{a-1}\binom{l}{i}\left(\tilde{f}_i(r_l)-\tilde{W}_i(\Omega)\right)\nonumber\\
    \geq& C(k,l)(\tilde{f}_0(r_l)-\tilde{f}_0(r_0)),
\end{align} 
which completes the proof.
\end{proof}
 Theorem \ref{stability} immediately implies the characterization of equality in the Alexandrov-Fenchel inequality in Theorem \ref{A-F}.

\section{Some properties of smooth modified quermassintegrals preserving shifted curvature flows}\label{flowsection}
In this section we deduce uniform upper bounds for $F$ and $\int_{M_t}|D|d\mu_t$  along the flow \eqref{smoothflow1} depending on the inner radius of initial hypersurfaces. Inspired by \cite{Andrews2013}, we consider the following lemma to prove the weak flow is well defined.    
\begin{lemma} \label{uniformupperboundsonF}
    Suppose that $M=\partial \Omega$ is smooth strictly h-convex. Then along the flow \eqref{smoothflow1} there exists $C=C(\rho_-(\Omega),n,k)$ such that 
   $$F\leq C(1+\frac{1}{\sqrt{t}}), t\in(0,\infty).$$
\end{lemma}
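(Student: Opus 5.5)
We follow the Tso-type maximum principle argument used by Andrews--McCoy--Zheng \cite{Andrews2013}, adapted to the constrained flow \eqref{smoothflow1}, working with the speed $F=\tilde E_{k+1}/\tilde E_k$ of the shifted Weingarten matrix $S=\mathcal W-I$.

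\emph{Step 1: geometric control independent of curvature.} Because $\tilde W_k$ is preserved, $\tilde W_k(\Omega_t)=\tilde W_k(\Omega)$ for all $t$. Combining this with the (already established) modified Alexandrov--Fenchel inequalities and the comparison of inner and outer radius in Theorem \ref{hconvex}, we obtain two-sided bounds on $\rho_\pm(\Omega_t)$ depending only on $\rho_-(\Omega),n,k$. A lower bound on $\rho_-(\Omega_t)$ is needed here. We would get it from monotonicity of $\tilde W_{k+1}$, or from Alexandrov reflection as in \cite{AndrewsChenWei2021}, which keeps a ball of fixed radius inside $\Omega_t$ on each time interval of unit length.

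\emph{Step 2: bound on $\phi$.} Using the Minkowski formula in Theorem \ref{shiftedminkow} together with the Newton--Maclaurin inequality, we plan to show
\[
\phi(t)=\frac{\int\tilde E_{k+1}}{\int\tilde E_k}\le C(\rho_-(\Omega),n,k).
\]
This should follow by taking $q$ the center of an inscribed ball and using \eqref{support} to bound the support quantity $S_q$ below by $\sinh\rho_-\tanh\rho_-$ and above by $\sinh\rho_+$.

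\emph{Step 3: Tso-type auxiliary function.} On a time interval $[t_0,t_0+1]$, fix $q$ at the center of a ball $B(q,2r)$ contained in all $\Omega_t$, with $r$ coming from Step 1. Set $\sigma=S_q=\sinh d_q\langle\nu,\partial d_q\rangle$, so that $\sigma\ge 2c_0>0$. Consider
\[
\Psi=\frac{(t-t_0)\,F}{\sigma-c_0}.
\]
Compute $(\partial_t-\dot F^{ij}\nabla_i\nabla_j)F$ from \eqref{weingartenevolution}, which gives $\dot F^{ij}(S^2)_{ij}F$ terms together with $\phi$-terms. Then compute the evolution of $\sigma$; the key piece of it is $(\partial_t-\dot F^{ij}\nabla_{ij})\sigma\le \cosh d_q(\phi-F)+F\dots-\dot F^{ij}(S^2)_{ij}\sigma+C$. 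At an interior maximum of $\Psi$, the good term $-\dot F^{ij}(S^2)_{ij}\,c_0F/(\sigma-c_0)^2$ together with $\dot F^{ij}(S^2)_{ij}\ge F^2$ for the concave quotient $F$ should give $\Psi\le C(1+\Psi^{1/2})$, and hence $F\le C(1+(t-t_0)^{-1})$.

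To reach the sharper $t^{-1/2}$ rate we would instead use $t\,F^2/(\sigma-c_0)$ or $\sqrt t$-weighting, matching the $\sqrt{s-t}$ term in Theorem \ref{weakflowthm}. For $t\ge1$ we take $t_0=t-1$, which makes the bound uniform. The main obstacle is the sign bookkeeping of the $\phi$-terms and the $\dot F^{ij}\delta_{ij}$ terms in \eqref{weingartenevolution}. We handle the $\phi$-terms with Step 2, and the $\dot F^{ij}\delta_{ij}$ terms using $\sum\dot F^{ii}\le C$ for $F=E_{k+1}/E_k$.
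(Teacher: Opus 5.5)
\textbf{Verdict: right strategy, but as written the proposal proves only a weaker rate and has a sign error.} Your strategy is the paper's own. It applies Tso's technique to $W=F/(\sigma-c)$, where $\sigma=S_q$ is the support function about the centre of an inscribed ball. It restarts on short time intervals, and the geometry is controlled through the preserved $\tilde W_k$.

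\textbf{The rate.} The main line of Step 3 uses the weight $(t-t_0)$ and ends with $F\le C(1+(t-t_0)^{-1})$. This is not the lemma, and it is not a harmless weakening. The bound $t^{-1}$ is not integrable at $0$, while Corollary \ref{yanhuajuli} and Lemma \ref{lemmaupperboundsonintD} integrate exactly $1+t^{-1/2}$. No weight is needed, because the reaction term for $W$ is cubic:
\[
\partial_t W\le \dot F^{ij}\Big(\nabla_i\nabla_j W+\frac{2}{\sigma-c}\nabla_iW\nabla_j\sigma\Big)+W^2\big(2+F^{-1}\dot F^{ij}\delta_{ij}\big)\cosh d_q-c^2W^3 .
\]
Here $F^{-1}W^2=W/(\sigma-c)\le W/c$ is only linear. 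The ODE comparison for $\max W$ then gives $W\le C(1+(t-t_0)^{-1/2})$ directly, which is what the paper does. A $\sqrt{t-t_0}$ weight, or $(t-t_0)W^2$, would also work. Your alternative $tF^2/(\sigma-c_0)$ does not: its reaction term contains $F^2\dot F^{ij}(S^2)_{ij}(\sigma-2c_0)/(\sigma-c_0)^2\ge 0$, which has the wrong sign since $\sigma\ge 2c_0$.

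\textbf{The sign in the evolution of $\sigma$.} Using $\overline\nabla(\sinh d_q\,\overline\nabla d_q)=\cosh d_q\,\overline g$ and $h=S+I$, one gets
\[
(\partial_t-\dot F^{ij}\nabla_i\nabla_j)\sigma=\big(\dot F^{ij}(S^2)_{ij}+2F+\dot F^{ij}\delta_{ij}\big)\sigma-\big(2F+\dot F^{ij}\delta_{ij}\big)\cosh d_q+\phi\cosh d_q .
\]
So the term is $+\dot F^{ij}(S^2)_{ij}\sigma$, not $-\dot F^{ij}(S^2)_{ij}\sigma$. The good term $-c_0F\dot F^{ij}(S^2)_{ij}/(\sigma-c_0)^2$ that you invoke comes precisely from this plus sign. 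It cancels against the $+F\dot F^{ij}(S^2)_{ij}$ in $\partial_tF=\dot F^{ij}\nabla_i\nabla_jF+(F-\phi)(\dot F^{ij}(S^2)_{ij}+2F)$. With your sign the quotient would instead pick up $+F\dot F^{ij}(S^2)_{ij}(2\sigma-c_0)/(\sigma-c_0)^2$, and the argument would collapse.

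\textbf{Minor points.}
\begin{itemize}
\item Every $\phi$-term enters with a favourable sign once $\phi>0$, so the upper bound on $\phi$ from Step 2 is not actually needed in the maximum principle.
\item The lower bound on $\rho_-(\Omega_t)$ comes from the preserved $\tilde W_k$ together with $\rho_+\le c(\rho_-+\rho_-^{1/2})$. It does not come from the monotonicity of $\tilde W_{k+1}$, which only gives upper bounds.
\item The restart interval should have length $\tau(\rho_-(\Omega),n,k)$, coming from comparison with a shrinking geodesic sphere, rather than length $1$.
\end{itemize}
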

\begin{proof}
For any $t_0\in[0,\infty)$, let $q$ be the center of the inner ball of $\Omega_{t_0}$. Then, by \cite{AndrewsChenWei2021},  there exist $c,\tau>0$ such that for any $t\in[t_0,t_0+\tau)$ there holds $u>2c$, where $u$ is the usual support function of $M_t$. Let $$W=\frac{F}{u-c}.$$

By \cite[Remark 4.1]{Zhang2026}, we have $$c_1(\rho_-(\Omega))\leq\phi(t)\leq c_2(\rho_-(\Omega)).$$
   By direct calculation, we have \begin{align*}
       &C_1(n,k)\leq \dot{F}^{ij}\delta_{ij}\leq C_2(n,k),\\
       &F^2\leq \dot{F}^{ij}(S^2)_{ij}.\end{align*}
    Then, by the calculations in \cite[Proposition 6.2]{AndrewsChenWei2021}, we have 
   \begin{align*}
       \frac{\partial}{\partial t} W\leq& \dot{F}^{ij}(\nabla_i\nabla_j W+\frac{2}{u-c}\nabla_i W\nabla_j u)\\&+W^2(2+F^{-1}\dot{F}^{ij}\delta_{ij})\cosh d_q-c^2W^3\\
       \leq &\dot{F}^{ij}(\nabla_i\nabla_j W+\frac{2}{u-c}\nabla_i W\nabla_j u)+W^2((2+C_2F^{-1})\cosh d_q-c^2W).
   \end{align*}
   Changing the origin repeatedly,  we can adapt Tso's \cite{TSO} technique and the maximum principle  to obtain that 
$$F\leq C(\rho_{-}(\Omega),n,k)(1+\frac{1}{\sqrt{t}}), t\in(0,\infty).$$
\end{proof}
By Lemma \ref{horosupportevolution} and \cite[Corollary 3.2]{LiWanXv2024}, using an  argument similar to that in \cite{Andrews2013}, we immediately obtain the following corollary.
\begin{corollary}\label{yanhuajuli}
    Suppose that $M=\partial \Omega$ is smooth strictly h-convex. Then along the flow \eqref{smoothflow1} there exists $C=C(\rho_-(\Omega),n,k)$ such that
    \begin{align*}
    d(M,M_t)\leq C (t+\sqrt{t}).      
    \end{align*}
\end{corollary}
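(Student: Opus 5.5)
The plan is to work in the horospherical support function picture. By Lemma \ref{horosupportevolution}, along \eqref{smoothflow1} the function $\varphi(\cdot,t)=\varphi_{\Omega_t}$ satisfies
\[
\partial_t\varphi=-F(A^{-1}[\varphi])+\varphi\,\phi(t).
\]
I would then bound $|\partial_t\varphi|$ pointwise on $\mathbb{S}^n$ for $t>0$, integrate in time to get a uniform bound on $\|\varphi_{\Omega_t}-\varphi_{\Omega}\|_{C^0(\mathbb{S}^n)}$, and finally convert this sup-norm bound into a Hausdorff distance bound using \cite[Corollary 3.2]{LiWanXv2024}. This mirrors the Euclidean argument of \cite{Andrews2013}, where the speed bound integrates to a bound on the support function.

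\textbf{Step 1 (speed bound).} At the point $x\in M_t$ whose horospherical Gauss image is $e$, the quantity $F(A^{-1}[\varphi])(e)$ equals $F(\mathcal{W}-I)(x)$ up to the factor coming from $A_{ij}=e^{-u}[(\mathcal{W}-I)^{-1}]$. Since $F$ is homogeneous of degree one, this factor is a power of $\varphi$. Lemma \ref{uniformupperboundsonF} gives $0< F\le C(1+t^{-1/2})$, and by \cite[Remark 4.1]{Zhang2026} we have $c_1\le\phi(t)\le c_2$, both depending only on $\rho_-(\Omega)$. So it remains to bound $\varphi_{\Omega_t}$ above and below uniformly. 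Because $\tilde W_k$ is preserved, the inner radius of $\Omega_t$ is bounded below and the outer radius above by constants depending on $\rho_-(\Omega)$ (via Theorem \ref{hconvex} and monotonicity of $\tilde W_k$ under inclusion). I also need the centers not to drift, so that $\Omega_t$ stays inside a fixed ball about the origin. I would get this by arguing first on short time intervals where the displacement is small, and then iterating by a continuity argument. Under these bounds $\varphi$ is two-sided bounded by constants $C(\rho_-(\Omega),n,k)$.

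\textbf{Step 2 (integrate in time).} From Step 1, $|\partial_t\varphi|\le C(1+t^{-1/2})$. Integrating from $0$ to $t$ gives
\[
\|\varphi_{\Omega_t}-\varphi_\Omega\|_{C^0}\le C(t+2\sqrt t).
\]
Then \cite[Corollary 3.2]{LiWanXv2024} converts this uniform closeness of the $\varphi$'s into a Hausdorff distance bound $d(M,M_t)\le C(t+\sqrt t)$. Any Lipschitz constant entering this conversion depends only on the two-sided bounds on $\varphi$ from Step 1.

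\textbf{Main obstacle.} The hardest part is the drift issue in Step 1: Lemma \ref{uniformupperboundsonF} is proved by changing origins, and the conversion lemma needs a common reference point. I would handle this with a bootstrap. On the maximal interval where $\Omega_t\subset B(o,\rho_+(\Omega)+1)$, the estimate above holds. It then shows that the displacement over that interval is at most $C(t+\sqrt t)$. If needed, I would restart the argument at intermediate times using the uniform inner radius bound, which keeps all constants controlled by $\rho_-(\Omega)$.
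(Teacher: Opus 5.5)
Your proposal is correct and follows the paper's route. You combine the evolution equation of Lemma \ref{horosupportevolution}, the speed bound of Lemma \ref{uniformupperboundsonF} together with the two-sided bounds on $\phi(t)$, and integration in time, then convert to Hausdorff distance via \cite[Corollary 3.2]{LiWanXv2024}, as in \cite{Andrews2013}. The drift issue you single out is not a real obstacle: written for $u=\log\varphi$, the equation reads $\partial_t u=\phi(t)-F(\mathcal{W}-I)$ for any fixed origin, so $\sup_{\mathbb{S}^n}|u(\cdot,t)-u(\cdot,0)|\le C(t+\sqrt{t})$ needs no a priori bounds on $\varphi$ and no bootstrap, and this sup-norm directly controls the Hausdorff distance between the h-convex bodies.
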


 We now consider the continuity of the curvature integrals.
\begin{lemma}
  Suppose that $M_1=\partial\Omega_1, M_2=\partial\Omega_2$ are closed h-convex hypersurfaces and $M_1,M_2\subseteq B(\rho)$, where $\rho>0$ and $B(\rho)$ is the geodesic ball of radius $\rho$ centered at the origin. Then we have 
  \begin{align*}
      \left|\int_{\mathcal{N}_{\Omega_1}}g(v)\tilde{E}_ld\mathcal{L}^nv-\int_{\mathcal{N}_{\Omega_2}}g(v)\tilde{E}_ld\mathcal{L}^nv\right|\leq C(\rho,n,l)d_{\mathbb{H}^{n+1}}(M_1,M_2).
  \end{align*}
   
\end{lemma}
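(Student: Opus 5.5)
The plan is to express the curvature integrals as quermassintegrals and then use a Lipschitz estimate for quermassintegrals under Hausdorff distance. By the definition of the modified support measures,
\[
\int_{\mathcal{N}_{\Omega}}g_\Omega(v)\tilde{E}_l(v)\,d\mathcal{L}^nv=\binom{n}{n-l}^{-1}\tilde{\Phi}_{n-l}(\Omega,\mathcal{N}_\Omega).
\]
By \eqref{relationsbetweenge} this equals $\sum_{\ell=0}^l(-1)^{l-\ell}\binom{l}{\ell}\binom{n}{\ell}^{-1}\Phi_{n-\ell}(\Omega,\mathcal{N}_\Omega)$. It therefore suffices to prove the estimate with $\tilde{E}_l$ replaced by $E_\ell$ for each $0\le\ell\le l$, with a constant depending only on $\rho,n,\ell$.

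The next step identifies the total masses $\Phi_{n-\ell}(\Omega,\mathcal{N}_\Omega)$ with quermassintegrals. I plan to use the Steiner formula \eqref{steiner} with $\beta=\mathcal{N}_\Omega$:
\[
W_0(\Omega^\varepsilon)-W_0(\Omega)=\sum_{j=0}^n\ell_{n+1-j}(\varepsilon)\Phi_j(\Omega,\mathcal{N}_\Omega).
\]
Comparing this with the hyperbolic Steiner formula for quermassintegrals (equivalently, the smooth identity $\int_{\partial\Omega}E_\ell\,d\mu=c_{n,\ell}\bigl(W_{\ell+1}+\tfrac{\ell}{n-\ell+2}W_{\ell-1}\bigr)$ extended by approximation through Lemma \ref{smoothapproxiamate} and Theorem \ref{weakconsupport}), each $\Phi_{n-\ell}(\Omega,\mathcal{N}_\Omega)$ is a fixed linear combination of $W_0(\Omega),\dots,W_{\ell+1}(\Omega)$. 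This holds for every convex body, smooth or not. The problem thus reduces to
\[
|W_i(\Omega_1)-W_i(\Omega_2)|\le C(\rho,n,i)\,d_{\mathbb{H}^{n+1}}(\Omega_1,\Omega_2).
\]
Here the Hausdorff distance of the bodies is comparable to that of their boundaries $M_1,M_2$, since both bodies are convex.

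To get this Lipschitz bound, set $d=d_{\mathbb{H}^{n+1}}(\Omega_1,\Omega_2)$. Then $\Omega_1\subseteq\Omega_2^{d}$ and $\Omega_2\subseteq\Omega_1^{d}$. Quermassintegrals of convex bodies are monotone under inclusion, which is clear from the integral-geometric definition via $\chi(L_k\cap\Omega)$ for $k\ge1$ and trivial for volume. Hence
\[
W_i(\Omega_1)-W_i(\Omega_2)\le W_i(\Omega_2^{d})-W_i(\Omega_2),
\]
and symmetrically. The Steiner formula for $W_i$ of outer parallel bodies expresses $W_i(\Omega^d)-W_i(\Omega)$ as $\sum_j a_{ij}(d)W_j(\Omega)$, where the $a_{ij}$ are smooth, vanish at $d=0$, and so satisfy $|a_{ij}(d)|\le C\,d$ for $d\le 2\rho$. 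The $W_j(\Omega)$ are bounded by $W_j(B(\rho))$, again by monotonicity. This gives the bound when $d\le 2\rho$, and $d\le2\rho$ always holds because both bodies lie in $B(\rho)$.

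The main obstacle is making the second step rigorous for nonsmooth bodies: checking that the total mass of each support measure agrees with the stated combination of quermassintegrals. I would settle this by proving the identity for smooth strictly h-convex bodies, where it is the classical Minkowski/Steiner computation. I would then pass to the limit, using Lemma \ref{smoothapproxiamate}, the weak continuity in Theorem \ref{weakconsupport} applied to total masses (the measures are supported in a compact set), and the continuity of $W_i$ under Hausdorff convergence. That continuity itself follows from the monotonicity-plus-parallel-body argument above.
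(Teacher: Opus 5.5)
Your proposal is correct and follows essentially the paper's route: reduce (via smooth approximation and weak continuity) to an identity expressing the total curvature integral as a fixed linear combination of quermassintegrals, then use Lipschitz continuity of the $W_m$ in the Hausdorff distance. The differences are minor. You pass through the unmodified $E_\ell$ via \eqref{relationsbetweenge} instead of quoting the Li--Xu identity for $\int\tilde E_l$ in terms of $\tilde W_l,\tilde W_{l+1}$. You also justify the Lipschitz bound (monotonicity under inclusion plus the parallel-body Steiner formula), where the paper only asserts it holds by definition.
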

\begin{proof}
      By the continuity of curvature integrals, we can assume that $M_1, M_2$ are smooth.
      By the statement in \cite{LiXu2022} (\cite[Lemma 3.4]{Zhang2026}), we have 
      $$  \left|\int_{M_1}\tilde{E}_ld\mu-\int_{M_2}\tilde{E}_ld\mu\right|=\left|(n-2l)(\tilde{W}_l(\Omega_1)-\tilde{W}_{l}(\Omega_2))+(n-l)(\tilde{W}_{l+1}(\Omega_1)-\tilde{W}_{l+1}(\Omega_2))\right|.$$
      By definition we have $|W_{m}(\Omega_1)-W_{m}(\Omega_2)|\leq C(\rho,n,m)d_{\mathbb{H}^{n+1}}(\Omega_1,\Omega_2)$, 
      from which we complete our proof.
\end{proof}
By the argument in \cite[Remark 4.1]{Zhang2026}, for any compact h-convex hypersurface $M=\partial \Omega$, we have $0<c_1(\rho_-(\Omega))<\tilde{\Phi}_{n-l}(\Omega)<c_2(\rho_-(\Omega))$ for all $1\leq l\leq n$.
Hence, by the lemma above, we immediately have the following lemma.
\begin{lemma}\label{estimateglobal}
    Suppose that $M_1=\partial\Omega_1, M_2=\partial\Omega_2$ are closed h-convex hypersurfaces  with $0<r_1\leq\rho_-(\Omega_1),\rho_-(\Omega_2)\leq r_2$. Let $\phi_i=\frac{k+1}{n-k}\frac{\tilde{\Phi}_{n-k-1}(\Omega_i)}{\tilde{\Phi}_{n-k}(\Omega_i)}$. Then we have 
    \begin{align}
        |\phi_1-\phi_2|\leq C(n,k,r_1,r_2)d_{\mathbb{H}^{n+1}}(\Omega_1,\Omega_2).
    \end{align}
\end{lemma}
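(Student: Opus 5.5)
The estimate is a quotient estimate, so I will combine the preceding lemma (Lipschitz continuity of the curvature integrals in Hausdorff distance) with two-sided bounds on the numerator and the denominator. By definition,
\begin{align*}
\tilde{\Phi}_{n-l}(\Omega_i)=\binom{n}{l}\int_{\mathcal{N}_{\Omega_i}} g_{\Omega_i}(v)\tilde{E}_{l}(v)\,d\mathcal{L}^n v ,
\end{align*}
so $\tilde\Phi_{n-k-1}(\Omega_i)$ and $\tilde\Phi_{n-k}(\Omega_i)$ are constant multiples of the curvature integrals of $\tilde E_{k+1}$ and $\tilde E_k$ treated in the previous lemma.

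\textbf{Step 1: a common ball.} I will place $\Omega_1$ and $\Omega_2$ in a common geodesic ball $B(\rho)$ with $\rho=\rho(r_1,r_2)$.
\begin{itemize}
\item If $d_{\mathbb{H}^{n+1}}(\Omega_1,\Omega_2)\geq 1$, the estimate holds trivially once $C\geq 2c_2(r_2)/c_1(r_1)$, since each $\phi_i$ is bounded by a constant depending on $r_1,r_2$ (see Step 2). So I may assume $d_{\mathbb{H}^{n+1}}(\Omega_1,\Omega_2)<1$.
\item Center coordinates at the center of an inner ball of $\Omega_1$. By Theorem \ref{hconvex}(1), $\Omega_1\subseteq B(r_2+\ln 2)$.
\item Since $d_{\mathbb{H}^{n+1}}(\Omega_1,\Omega_2)<1$, it follows that $\Omega_2\subseteq B(r_2+\ln2+1)$.
\end{itemize}
Hence both boundaries lie in $B(\rho)$ with $\rho=r_2+\ln 2+1$.

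\textbf{Step 2: bounds on numerator and denominator.} For any h-convex body $\Omega$, write $N(\Omega)=\tilde\Phi_{n-k-1}(\Omega)$ and $P(\Omega)=\tilde\Phi_{n-k}(\Omega)$.
\begin{itemize}
\item The previous lemma, applied with $l=k+1$ and with $l=k$, gives
\begin{align*}
|N(\Omega_1)-N(\Omega_2)|+|P(\Omega_1)-P(\Omega_2)|\leq C(\rho,n,k)\,d_{\mathbb{H}^{n+1}}(\Omega_1,\Omega_2).
\end{align*}
Note that this lemma is stated with the distance between the boundary hypersurfaces. For convex bodies, the Hausdorff distance of the boundaries is at least that of the bodies. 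This is the direction needed for the lemma's proof via quermassintegrals, since $|W_m(\Omega_1)-W_m(\Omega_2)|$ is controlled by the distance of the bodies. I will simply use the bound with $d_{\mathbb{H}^{n+1}}(\Omega_1,\Omega_2)$, which is what that proof actually establishes.
\item The remark after that lemma (from \cite[Remark 4.1]{Zhang2026}) gives
\begin{align*}
c_1\leq P(\Omega_i),\;N(\Omega_i)\leq c_2 .
\end{align*}
Here $c_1$ depends only on a lower bound for the inner radius, which is $r_1$. The constant $c_2$ depends only on an upper bound, which is $r_2$ (through Theorem \ref{hconvex}). I will check that the monotone dependence there indeed makes $c_1,c_2$ depend only on $r_1,r_2$; this is the only point needing care.
\end{itemize}

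\textbf{Step 3: the quotient estimate.} I write
\begin{align*}
\frac{N_1}{P_1}-\frac{N_2}{P_2}=\frac{(N_1-N_2)P_2+N_2(P_2-P_1)}{P_1P_2}.
\end{align*}
Using Step 2, this yields
\begin{align*}
|\phi_1-\phi_2|\leq\frac{k+1}{n-k}\,\frac{2c_2}{c_1^2}\,C(\rho,n,k)\,d_{\mathbb{H}^{n+1}}(\Omega_1,\Omega_2).
\end{align*}
Together with the trivial case from Step 1, this completes the proof, with $C=C(n,k,r_1,r_2)$.
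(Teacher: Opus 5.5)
Your proposal is correct and is essentially the paper's argument: the paper also derives the lemma directly from the preceding Lipschitz estimate for the curvature integrals combined with the two-sided bounds $0<c_1<\tilde\Phi_{n-l}(\Omega)<c_2$. Your common-ball reduction, the trivial large-distance case and the explicit quotient identity only spell out what the paper calls ``immediate''. One small correction: $c_1$ cannot depend on $r_1$ alone, since for a geodesic ball $\tilde\Phi_0(B_r)=\omega_n e^{-nr}\to 0$ as $r\to\infty$ (this is relevant when $k=n-1$), so $c_1$ must also depend on $r_2$, which is harmless because $C$ is allowed to depend on both.
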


Inspired by \cite{BCCN2009, Andrews2013}, we next consider the  stability with respect to Hausdorff distance of initial hypersurfaces for the flow \eqref{smoothflow1}.
\begin{lemma}\label{stabilityofflow1}
    For any $r_2> r_1>0$, there exists $T(r_1,r_2)>0$ such that, for any $\varepsilon>0$, there exists $\delta(r_1,r_2,\varepsilon)$  such that for any closed smooth strictly h-convex hypersurfaces $M_1=\partial \Omega_1, M_2=\partial\Omega_2$ satisfying $d_{\mathbb{H}^{n+1}}(M_1,M_2)\leq \delta(r_1,r_2,\varepsilon)$, $r_1\leq \rho_-{(\Omega_1)}\leq\rho_+(\Omega_1)\leq r_2$ and $r_1\leq \rho_-{(\Omega_2)}\leq\rho_+(\Omega_2)\leq r_2$,  we have 
    \begin{align*}
     d_{\mathbb{H}^{n+1}}(M_1(t),M_2(t))\leq \varepsilon,  \quad \text{for all $t\in [0,T_0)$}.
    \end{align*}
    Here $M_1(t), M_2(t)$ are  the solutions of \eqref{smoothflow1} with initial hypersurfaces $M_1$ and $M_2$ respectively. 
\end{lemma}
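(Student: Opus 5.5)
The plan is to split $[0,T)$ into an initial layer, handled by the Hölder-in-time estimate, and a later interval, handled by a comparison argument for horospherical support functions plus Gronwall.

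\textbf{Choice of $T$ and uniform control.} By Corollary \ref{yanhuajuli}, $d(M_i,M_i(t))\leq C(t+\sqrt t)$ with $C=C(r_1,n,k)$. I fix $T=T(r_1,r_2)$ so small that $C(T+\sqrt T)\leq r_1/4$. Then for $t\in[0,T)$ the inner and outer radii of $\Omega_i(t)$ stay in $[r_1/2,\,2r_2]$. Consequently, Lemma \ref{estimateglobal} applies with constants depending only on $(n,k,r_1,r_2)$:
\begin{align*}
|\phi_1(t)-\phi_2(t)|\leq C_0\, d_{\mathbb{H}^{n+1}}(\Omega_1(t),\Omega_2(t)).
\end{align*}
Here $\phi_i(t)$ is the global term, which is exactly the quotient in Lemma \ref{estimateglobal} by the Minkowski-type identities.

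\textbf{Initial layer.} Given $\varepsilon$, choose $t_0=t_0(\varepsilon,r_1)$ with $2C(t_0+\sqrt{t_0})\leq \varepsilon/2$. For $t\leq t_0$ the triangle inequality gives
\begin{align*}
d(M_1(t),M_2(t))\leq d(M_1,M_2)+2C(t_0+\sqrt{t_0})\leq \delta+\varepsilon/2.
\end{align*}
So the claim holds on $[0,t_0]$ once $\delta\leq\varepsilon/2$. The point of this step is to avoid any curvature bound near $t=0$.

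\textbf{Later interval $[t_0,T)$.} Lemma \ref{uniformupperboundsonF} gives $F\leq C(1+t_0^{-1/2})=:K$ on both flows. I would work with the horospherical support functions $u_i=\log\varphi_i$ and the flow from Lemma \ref{horosupportevolution}.

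The key geometric fact is that $u_{\Omega^s}=u_\Omega+s$, since a parallel set of a horoball is again a horoball. I set
\begin{align*}
m(t)=\max_{\mathbb{S}^n}(u_1-u_2)(\cdot,t).
\end{align*}
At a maximum point $e$, the body $\Omega_2(t)^{m}$ contains $\Omega_1(t)$ and touches it in the direction $e$. By the comparison of shifted Weingarten maps at touching points, $F(M_1)\geq F(\partial\Omega_2^{m})$. By the parallel-surface formula \eqref{waipingxingqvlv}, $F(\partial\Omega_2^m)\geq F(M_2)-C(K)\,|m|$ while $|m|\leq 1$, because the curvature of $M_2$ is bounded through $K$.

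Since $\partial_t u=-F\varphi^{-1}+\phi(t)$ and $\varphi$ is bounded above and below by the radius bounds, Hamilton's trick gives, for a.e.\ $t$,
\begin{align*}
m'(t)\leq C\,|m|+|\phi_1-\phi_2|\leq C\bigl(|m|+d(\Omega_1(t),\Omega_2(t))\bigr).
\end{align*}
The Hausdorff distance is comparable to $\max(|\max(u_1-u_2)|,|\max(u_2-u_1)|)$, by \cite[Corollary 3.2]{LiWanXv2024} and the converse bound. Applying the same estimate symmetrically then yields $\frac{d}{dt}\|u_1-u_2\|_\infty\leq C\|u_1-u_2\|_\infty$ with $C=C(n,k,r_1,r_2,\varepsilon)$. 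Gronwall on $[t_0,T]$ gives
\begin{align*}
\|u_1-u_2\|_\infty(t)\leq e^{CT}\|u_1-u_2\|_\infty(t_0).
\end{align*}
That quantity is small if $\delta$ is small, by continuity of the flow in the initial data on $[0,t_0]$.

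\textbf{Main obstacle.} The step I expect to be hardest is the last continuity input on $[0,t_0]$. The initial-layer estimate only gives an $\varepsilon/2$ bound at $t_0$, not smallness in $\delta$. I would close this by redoing the comparison on $[0,t_0]$ through the sup-inf structure instead of a curvature bound. Concretely, I would compare $M_1$ with the smooth flow starting from the parallel hypersurface $\partial\Omega_2^{\delta}\supset\Omega_1$, using the avoidance principle for the normal speed $-F$. The global terms would be treated as a time-dependent forcing of size $\int_0^{t_0}|\phi_1-\phi_2|\leq C_0t_0\sup d$. This forcing is absorbed for $t_0$ small, and it yields $d(M_1(t),M_2(t))\leq 2\delta+Ct_0\sup_{[0,t_0]} d$ on the initial layer.
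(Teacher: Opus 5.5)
\textbf{There is a genuine gap, in the step you flag yourself.} Your argument on $[t_0,T)$ is essentially workable, with one caveat. For $k\geq1$ the bound $F\leq K$ does not bound the individual shifted principal curvatures. So $F(\partial\Omega_2^m)\geq F(M_2)-C(K)m$ has to come from the inverse-concavity of $F$, not from a curvature bound. (With $x=1/s$ the parallel map is affine, $x\mapsto e^{2m}x+e^m\sinh m$.)

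The real problem is the initial layer. Your Gronwall constant depends on $t_0$, hence on $\varepsilon$. So everything reduces to proving $\sup_{[0,t_0]}\|u_1-u_2\|_\infty\to0$ as $\delta\to0$, uniformly over all bodies with the given radius bounds. That is the lemma itself on a short interval where no curvature bound is available, and your proposed fix does not prove it.

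\begin{itemize}
\item Comparing $M_1$ with the flow from $\partial\Omega_2^\delta$ only relocates the problem. You would then need that flow to stay $O(\delta)$-close to $M_2(t)$, which is again continuous dependence.
\item Parallel sets cannot supply this, because parallel hypersurfaces have smaller $F$. Hence the $\delta$-parallel of a solution is an \emph{inner} barrier for the flow from $\partial\Omega_2^\delta$, not an outer one.
\item For the same reason, absorbing the global terms by an outward shift $u\mapsto u+c(t)$ creates the error $G(u)-G(u+c)$, where $G=\varphi^{-1}F(A^{-1}[\varphi])$. This error is of order $c\,F^2$, i.e.\ up to $c/t$ under the bound $F\leq C(1+t^{-1/2})$ of Lemma \ref{uniformupperboundsonF}. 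That is not integrable at $t=0$, so taking $t_0$ small does not absorb it.
\end{itemize}

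\textbf{The missing idea is a curvature-free inner barrier.} The paper contracts toward the origin and rescales time parabolically: $\varphi_{2,\lambda}(x,t)=\lambda\varphi_2(x,t/\lambda^2)+1-\lambda$.

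\begin{itemize}
\item Since $A$ is concave (Lemma \ref{concaveofA}) and $A[1]=0$, one gets $A[\varphi_{2,\lambda}]\geq\lambda A[\varphi_2]$.
\item By $1$-homogeneity of $F$, the resulting factor $\lambda^{-1}$ in the speed is exactly compensated by the time rescaling. So $\varphi_{2,\lambda}$ is a subsolution with global term $\lambda^{-2}\phi_2(t/\lambda^2)$, and no curvature bound is needed.
\item At $t=0$ it lies below $\varphi_1$ with a gap of order $(1-\lambda)(1-e^{-r_1/2})$, which dominates $\delta$.
\item At a positive minimum of $\log\varphi_1-\log\varphi_{2,\lambda}$, the zeroth-order term has the favourable sign. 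So only $\int(\phi_1-\lambda^{-2}\phi_2(\cdot/\lambda^2))$ enters.
\item The reverse bound uses only $\varphi_2(t)-\varphi_2(t/\lambda^2)\leq C(1-\lambda)(t+\sqrt t)$. This follows from the integrable bound on $F$.
\item A bootstrap with Lemma \ref{estimateglobal} then gives $|\phi_1-\phi_2|\leq C(1-\lambda)$ on an interval $[0,T_0)$ depending only on $r_1,r_2$.
\end{itemize}

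This covers the whole interval at once, with $\lambda\to1$ as $\delta\to0$. It makes your split into an initial layer and a Gronwall phase unnecessary.
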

\begin{proof}
For any $\varepsilon\in(0,1)$,
    let $M_1=\partial \Omega_1$, $M_2=\partial \Omega_2$ be  closed smooth strictly h-convex hypersurfaces such that $r_1\leq \rho_-{(\Omega_i)}\leq\rho_+(\Omega_i)\leq r_2$, $i=1,2$ and $d_{\mathbb{H}^{n+1}}(M_1,M_2)\leq \delta_1(r_1,r_2)<\frac{r_1}{4}$. Choose the center of the inner ball of $\Omega_1$ as the origin. Let $u_i$ be the horosupport function of $M_i$ and $\varphi_i=e^{u_i}$.  Now, let $M_1,M_2$ evolve along the flow \eqref{smoothflow1}.  By arguments in \cite{AndrewsChenWei2021}, there exists $T_1(r_1)<1$ such that 
    $e^{\frac{r_1}{2}}\leq \varphi_1,\varphi_2\leq e^{2r_2}, t\in[0,2T_1)$.
     Let 
    $$\varphi_{i,\lambda}=\lambda\varphi_i(x,\frac{t}{\lambda^2})+1-\lambda,\quad 0<\lambda<1, \quad i=1,2.$$
    By a direct calculation we have $A[\varphi_{i,\lambda}]\geq \lambda A[\varphi_i]$, $i=1,2$. Moreover, by Lemma \ref{horosupportevolution}, we have
   \begin{align}\label{evoofphilambda}
   \frac{\partial}{\partial t}\varphi_{i,\lambda}(x,t)&=\frac{1}{\lambda} (\varphi_i(x,\frac{t}{\lambda^2})\phi_i(\frac{t}{\lambda^2})-F(A[\varphi_i]^{-1}))\nonumber\\
   &\leq \frac{1}{\lambda^2}\varphi_{i,\lambda}(x,t)\phi_i(\frac{t}{\lambda^2})-F(A[\varphi_{i,\lambda}]^{-1}),\quad i=1,2.
   \end{align}
Let 
\begin{align*}d_1(x,t)&=\log\varphi_1-\log\varphi_{2,\lambda},\\
    d_2(x,t)&=\log\varphi_2-\log\varphi_{1,\lambda}.
\end{align*}  Suppose that $$d_i(x_i',t)=\min\limits_{x\in\mathbb{S}^n} d_i(x,t), x'\in\mathbb{S}^n,\quad i=1,2.$$ At $(x_i',t)$ we have 
$\overline\nabla d_i(x_i',t)=0$, $\overline{\nabla}^2d_i(x_i',t)\geq 0$. Noticing that \begin{align*}\log\varphi_1(x,t)&=d_1(x,t)+\log\varphi_{2,\lambda}(x,t),\\
\log \varphi_2(x,t)&=d_2(x,t)+\log\varphi_{1,\lambda}(x,t),\end{align*} by a direct calculation we have that if $d_i(x,t)>0$ there holds
\begin{align*}
    \frac{F(A[\varphi_{2,\lambda}]^{-1})}{\varphi_{2,\lambda}}&\geq \frac{F(A[\varphi_1]^{-1})}{\varphi_1},\\
    \frac{F(A[\varphi_{1,\lambda}]^{-1})}{\varphi_{1,\lambda}}&\geq \frac{F(A[\varphi_2]^{-1})}{\varphi_2}.
\end{align*}
Hence we have that if $d_i(x,t)>0$ there holds
\begin{align*}
\frac{\partial}{\partial t}d_1(x_1',t)&\geq   \phi_1(t)-\frac{1}{\lambda^2}\phi_2(\frac{t}{\lambda^2})\\
\frac{\partial}{\partial t}d_2(x_2',t)&\geq   \phi_2(t)-\frac{1}{\lambda^2}\phi_1(\frac{t}{\lambda^2}).  
\end{align*}
Hence, if $d_i(x,t)>0$ on the time interval $[0,T^*)\subseteq [0,2T_1)$, we have 
\begin{align*}
    \min\limits_{x\in\mathbb{S}^n}\{d_1(x,t)\}\geq& \min\limits_{x\in\mathbb{S}^n}\{d_1(x,0)\}+\int_0^t \left(\phi_1(s)-\frac{1}{\lambda^2}\phi_2(\frac{s}{\lambda^2})\right)ds\nonumber\\
    =& \min\limits_{x\in\mathbb{S}^n}\{\log \varphi_1-\log\varphi_2-\log(1-(1-\lambda)(1-\varphi_2^{-1}))\}\nonumber\\
    &+\int_0^{t} \left(\phi_1(s)-\phi_2(s)\right)ds-\int_t^{\lambda^{-2}t}\phi_2(s)ds\nonumber\\
    \geq& (1-\lambda)(1-e^{-\frac{r_1}{2}})-\delta_1-\left|\int_0^{t} \left(\phi_1(s)-\phi_2(s)\right)ds\right|\nonumber\\
    &+M(r_1,r_2)t(1-\lambda^{-2}), t\in[0,T^*),
\end{align*}
where $M(r_1,r_2)$ is a constant such that $2\phi_2(t)\leq M(r_1,r_2)$ for any $t\in[0,2T_1)$. Noticing that $\varphi_{2,\lambda}$ is increasing with respect to $\lambda$, we can choose $\lambda(\delta_1)>0$ such that $\varphi_1(x,0)> \varphi_{2,\lambda}(x,0)$ and $$\lambda<1-\frac{2\delta_1}{1-e^{-\frac{r_1}{2}}}.$$ Then we have 
\begin{align}
    \min\limits_{x\in\mathbb{S}^n}\{d_1(x,t)\}
    \geq& \frac{1-\lambda}{2}(1-e^{-\frac{r_1}{2}})-\left|\int_0^{t} \left(\phi_1(s)-\phi_2(s)\right)ds\right|\nonumber\\
    &+M(r_1,r_2)t(1-\lambda^{-2}), t\in[0,T^*).\label{estimated}
\end{align}

Then, by Corollary \ref{yanhuajuli}, for any $t\in[0,T^*]$ there holds
\begin{align*}
    &\log\varphi_2(x,t)-\log\varphi_1(x,t)\nonumber\\=&\log\varphi_2(x,t)-\log(\lambda\varphi_2(x,t)
    +1-\lambda)\nonumber\\
    &+\log(\lambda\varphi_2(x,t)+1-\lambda)-\log\varphi_{2,\lambda}(x,t)\nonumber\\
    &+\log\varphi_{2,\lambda}(x,t)-\log\varphi_1(x,t)\nonumber\\
    \leq&\log\left(1+\frac{(1-\lambda)\varphi_2(x,t)}{\lambda\varphi_2(x,t)
    +1-\lambda}\right)+\log\left(1+\frac{\lambda(\varphi_2(x,t)-\varphi_2(x,\frac{t}{\lambda^2}))}{\lambda\varphi_2(x,\frac{t}{\lambda^2})+1-\lambda}\right)\nonumber
\end{align*}
By Lemma \ref{uniformupperboundsonF} and Lemma \ref{horosupportevolution}, we have
$$\varphi_2(x,t)-\varphi_2(x,\frac{t}{\lambda^2})\leq C(r_1,r_2)(t+\sqrt{t})(1-\lambda).$$
Hence there exists $C'(r_1,r_2)>1$ such that 
\begin{align}\label{zongguji}
    \log\varphi_2(x,t)-\log\varphi_1(x,t)
    \leq C'(r_1,r_2)(1-\lambda)(1+\sqrt{t}+t).
\end{align}
  By an argument similar to that above, without loss of generality we assume that 
\begin{align*}
    \log\varphi_1(x,t)-\log\varphi_2(x,t)\nonumber
    \leq C'(r_1,r_2)(1-\lambda)(1+\sqrt{t}+t).
\end{align*}

Then by Lemma \ref{estimateglobal} we have 
\begin{align*}
|\phi_1(t)-\phi_2(t)|\leq     C(n,k,r_1,r_2)(1-\lambda), \forall t\in[0,T^*].
\end{align*}
Hence, by \eqref{estimated}, we have 
\begin{align}
    \min\limits_{x\in\mathbb{S}^n}\{d_1(x,t)\}
    \geq& \frac{1-\lambda}{2}(1-e^{-\frac{r_1}{2}})-C(n,k,r_1,r_2)(1-\lambda)t\nonumber\\
    &-M(r_1,r_2)\frac{1+\lambda}{\lambda^2}(1-\lambda)t, t\in[0,T^*).\label{estimateofd2}
\end{align}
We remark that if $\delta_1$ is close enough to $0$ we can choose $\lambda(\delta_1)$ close enough to $1$. We can fix a small $\delta_0(r_1,r_2,\varepsilon)$ such that if $0\leq\delta_1\leq\delta_0$ we can choose $\lambda$ such that 
\begin{align}
    &\varphi_1(x,0)> \varphi_{2,\lambda}(x,0),\nonumber\\
    &\lambda<1-\frac{2\delta_1}{1-e^{-\frac{r_1}{2}}},\nonumber\\
    &\frac{1}{2}\leq\lambda<1,\nonumber\\
    &1-\frac{\varepsilon}{5C'(r_1,r_2)}<\lambda\label{lambdatiaojian}.
\end{align}
Let $$T_0=\min\left\{\frac{1-e^{\frac{-r_1}{2}}}{2(C(n,k,r_1,r_2)+10M(r_1,r_2)},\frac{T_1}{2}\right\}.$$ 
Then, for any $t\in[0,\min\{T_0,T^*\})$, by \eqref{estimateofd2}, we have 
$$\min\limits_{x\in\mathbb{S}^n}\{d_1(x,t)\}>C''(n,k,r_1,r_2)(1-\lambda)>0.$$ Hence we have $$\min\limits_{x\in\mathbb{S}^n}\{d_1(x,t)\}>0, \forall t\in[0,T_0).$$
Then, by  \eqref{zongguji} and \eqref{lambdatiaojian}, we have 
$$\log\varphi_2(x,t)-\log\varphi_1(x,t)<\varepsilon, \forall \delta_1\in[0,\delta_0), t\in[0,T_0). $$
We can apply similar arguments to prove that there exist $\delta'(r_1,r_2,\varepsilon)$ and $T'_0(r_1,r_2)$ such that if $d_{\mathbb{H}^{n+1}}(M_1,M_2)\leq \delta'(r_1,r_2,\varepsilon)$ there holds \begin{align*}
    \log \varphi_1-\log\varphi_2< \varepsilon, \forall t\in[0,T_0').
\end{align*}
Then, by \cite{LiWanXv2024}, we complete our proof.
\end{proof}
By applying Lemma \ref{stabilityofflow1} successively on time intervals, we obtain the following lemma.
\begin{lemma}\label{stabilityofflow}
    Suppose that $\Omega\in\mathcal{H}(\mathbb{H}^{n+1})$. Let $\{\Omega_i\}$ be a sequence of smooth strictly h-convex bodies such that $\Omega_i\rightarrow\Omega$ as $i\rightarrow\infty$. Evolve $M_j=\partial\Omega_j$ along the flow \eqref{smoothflow1} and denote by $M_{j,t}=\partial\Omega_{j,t}$ the evolving hypersurface of $M_j$ at time $t\in[0,\infty)$. Then,  for any $t\in[0,\infty)$, there holds 
    $$d_{\mathbb{H}^{n+1}}(M_{j,t},M_{i,t})\rightarrow 0, i,j\rightarrow\infty.$$
\end{lemma}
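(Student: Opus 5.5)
The proof will iterate Lemma \ref{stabilityofflow1} on successive time intervals. The point is to keep the constants $r_1,r_2$ (and hence the step length $T(r_1,r_2)$) uniform along the approximating flows.

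\textbf{Step 1: uniform geometric bounds.} Since $\Omega_i\to\Omega$ in the Hausdorff sense and $\Omega$ has nonempty interior, there are $0<r_1\le r_2$ with $2r_1\le\rho_-(\Omega_i)\le\rho_+(\Omega_i)\le r_2/2$ for all large $i$. The flows preserve $\tilde W_k$, and by the modified Alexandrov--Fenchel inequality $\tilde W_{k+1}(\Omega_{j,t})\le\tilde W_{k+1}(\Omega_j)$ stays bounded. Combining this with Theorem \ref{hconvex} (the inner/outer radius comparison for h-convex bodies), as in \cite{AndrewsChenWei2021} and \cite[Remark 4.1]{Zhang2026}, I expect uniform bounds
\[
0<r_1'\le\rho_-(\Omega_{j,t})\le\rho_+(\Omega_{j,t})\le r_2'
\]
for all $j$ large and all $t\ge0$. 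The argument is as follows. The upper bound on $\rho_+$ comes from $\rho_+\le c(\rho_-+\rho_-^{1/2})$ together with the fact that $\tilde W_k$ is fixed, which bounds $\rho_-$ from above. The lower bound on $\rho_-$ holds because a preserved $\tilde W_k$ with $k\ge 0$ prevents collapse to small inner radius for h-convex bodies; otherwise $\tilde W_k$ would tend to $0$ by \eqref{innerandouter}. This step is the main obstacle. It must hold for all time and uniformly in $j$, not just along one flow. If a direct argument is unavailable, I would instead restrict to a compact time interval $[0,T]$ and use Corollary \ref{yanhuajuli} to control how far the bodies move, which gives uniform radius bounds on $[0,T]$. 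That suffices, since the claim concerns a fixed $t$.

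\textbf{Step 2: iterate the stability lemma.} Fix $t>0$ and $\varepsilon>0$. Let $T_0=T(r_1',r_2')$ from Lemma \ref{stabilityofflow1} and choose $N$ with $NT_0>t$. Define tolerances backward: set $\varepsilon_N=\varepsilon$, and for $m=N,\dots,1$ let $\varepsilon_{m-1}=\min\{\delta(r_1',r_2',\varepsilon_m),\varepsilon_m\}$. For $i,j$ large we have $d_{\mathbb H^{n+1}}(M_i,M_j)\le\varepsilon_0$, since both sequences converge to $\partial\Omega$. Apply Lemma \ref{stabilityofflow1} to the initial hypersurfaces $M_{i,mT_0/2}$ and $M_{j,mT_0/2}$ (steps of length $T_0/2$, to stay inside $[0,T_0)$). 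Each step is legitimate for two reasons:
\begin{itemize}
\item the time-shifted solutions are again solutions of \eqref{smoothflow1}, with smooth strictly h-convex initial data by Theorem \ref{ACWsmoothflow};
\item the radius bounds from Step 1 hold at each restart time.
\end{itemize}
Inductively, $d(M_{i,s},M_{j,s})\le\varepsilon_m$ on the $m$-th interval, so $d(M_{i,t},M_{j,t})\le\varepsilon$. The constants are independent of $i,j$ because they depend only on $r_1',r_2'$.

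\textbf{Step 3: conclusion.} Since $\varepsilon$ is arbitrary, $\{M_{j,t}\}_j$ is Cauchy in the Hausdorff distance, which proves the lemma. The case $t=0$ is immediate.
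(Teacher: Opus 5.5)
Your proposal is correct and takes the same route as the paper, whose entire proof is to apply Lemma~\ref{stabilityofflow1} successively on time intervals. Your Step~1 makes explicit the uniform radius bounds at the restart times that the paper leaves implicit, using the preserved $\tilde W_k$, the monotonicity of $\tilde W_k$ under inclusion of h-convex bodies, and Theorem~\ref{hconvex}.

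Two minor points. First, with restarts at $mT_0/2$ you need $NT_0/2>t$ rather than $NT_0>t$. Second, your fallback via Corollary~\ref{yanhuajuli} would not give a positive lower bound on the inner radius over an arbitrary $[0,T]$, because the displacement bound $C(t+\sqrt{t})$ eventually exceeds $\rho_-$; the $\tilde W_k$ argument is the one to use.
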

Suppose that $\Omega$ is smooth strictly h-convex and $M_t$ is the solution to \eqref{smoothflow1} with initial hypersurface $M=\partial\Omega$.
We next prove that  $\int_{M_t}|D|d\mu_t$ can be controlled by $t$ and $\int_M|D|d\mu$.

\begin{lemma}\label{lemmaupperboundsonintD}
Suppose that $\Omega$ is smooth strictly h-convex and $M_t$ is the solution to \eqref{smoothflow1} with initial hypersurface $M=\partial\Omega$. Then
there exists $C=C(\rho_-(\Omega),n,k)$ such that
    \begin{align*}
        \int_{M_t} |D|d\mu_t\leq \exp(C(\sqrt{t}+t))\int_{M_0} |D|d\mu.
    \end{align*}
\end{lemma}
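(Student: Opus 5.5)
We start from the evolution identity of Lemma \ref{intDevo} and bound each term on its right-hand side by $C(\rho_-(\Omega),n,k)\bigl(1+t^{-1/2}\bigr)\int_{M_t}|D|\,d\mu_t$. Grönwall's inequality then gives
\[
\int_{M_t}|D|\,d\mu_t\leq \exp\Bigl(C\int_0^t(1+s^{-1/2})\,ds\Bigr)\int_{M_0}|D|\,d\mu ,
\]
which is the claimed bound after enlarging $C$. The two inputs that do not depend on curvature bounds are as follows. Lemma \ref{uniformupperboundsonF} gives $0<F\leq C(1+t^{-1/2})$. By \cite[Remark 4.1]{Zhang2026}, as used in the proof of Lemma \ref{uniformupperboundsonF}, we have $c_1\le\phi(t)\le c_2$, and $\int_{M_t}\tilde E_k\,d\mu_t$ is bounded above and below by constants depending only on $\rho_-(\Omega)$. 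We also use that $\rho_-(\Omega_t)$ stays comparable to $\rho_-(\Omega)$ along the flow, since $\tilde W_k$ is preserved.

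\textbf{The second-order term.} For $\int_{M_t}\mathrm{sgn}(D)\dot D^{ij}\nabla_i\nabla_jF\,d\mu_t$, write $D=(F-\phi)\tilde E_k$, so that $\dot D^{ij}=\dot{\tilde E}_{k+1}^{ij}-\phi\dot{\tilde E}_k^{ij}$. Each $\dot{\tilde E}_l^{ij}$ is divergence free \cite{Reilly1973}, hence so is $\dot D^{ij}$. Integrate by parts formally, replacing $\mathrm{sgn}$ by a smooth approximation $\mathrm{sgn}_\epsilon(D)$. This produces $-\int\mathrm{sgn}_\epsilon'(D)\,\dot D^{ij}\nabla_iD\nabla_jF$. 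Since $\nabla D=\tilde E_k\nabla F+(F-\phi)\nabla\tilde E_k$, the leading contribution $-\int\mathrm{sgn}_\epsilon'(D)\tilde E_k\dot D^{ij}\nabla_iF\nabla_jF$ is the good sign wherever $\dot D^{ij}\ge0$ (near $D=0$ we have $F\approx\phi$ and $\dot D^{ij}\approx\tilde E_k\dot F^{ij}$, which is positive definite). The cross term is supported on $\{|D|<\epsilon\}$ and carries a factor $(F-\phi)=O(\epsilon)$, so it vanishes as $\epsilon\to0$. Thus this term is $\le o(1)$ and may be dropped.

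\textbf{The zeroth-order terms.} The prefactor $\frac{\mathrm{sgn}(D)\int\tilde E_k-\int\mathrm{sgn}(D)\tilde E_k}{\int\tilde E_k}$ is bounded by $2$ in absolute value. The term $(\phi-F)HD$ is controlled by $|F-\phi|\,H\,|D|$. Using $F\le C(1+t^{-1/2})$ it suffices to bound $H$ by curvature-independent quantities. The term $(F-\phi)\dot D^{ij}((S^2)_{ij}+2S_{ij})$ is rewritten using homogeneity and Newton--Maclaurin: $\dot{\tilde E}_{l}^{ij}(S^2)_{ij}$ is a combination of $\tilde E_l\tilde E_1$ and $\tilde E_{l+1}$, and $\dot{\tilde E}_l^{ij}S_{ij}=l\tilde E_l$. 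After this rewriting the term becomes $(F-\phi)$ times a combination of $D$ and $\tilde E_1D$ plus terms of the form $(F-\phi)^2\tilde E_k\times(\text{bounded})=|F-\phi|\,|D|\times(\text{bounded})$.

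\textbf{Main obstacle.} The leftover factors $H$ and $\tilde E_1$ are not bounded independently of the initial curvature. Only the quotient $F=\tilde E_{k+1}/\tilde E_k$ is bounded, and $\tilde E_1\ge F$ gives the wrong direction. The first thing to try is to let the prefactor absorb the difficulty. The combination multiplying $(\phi-F)HD$ has mean zero against $\tilde E_k$, so one may subtract $\int H\tilde E_k$-averaged constants and use the Minkowski formulas of Theorem \ref{shiftedminkow} to express $\int_{M_t}\tilde E_1\tilde E_k$-type integrals through quermassintegrals, which are bounded by $\rho_-$. If that fails, the fallback is a bound on $\int|D|\tilde E_1$ via the pointwise identity $D\,\tilde E_1$-terms combining with the $F\,HD$ term, which cancels the worst parts exactly as in the derivation of \eqref{Devolution}. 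I expect this cancellation to be the crux.
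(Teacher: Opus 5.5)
Your handling of the second-order term is fine; it is the paper's level-set argument in smoothed form. The gap is in the zeroth-order terms. The proposal stops at the ``main obstacle'' without resolving it. Moreover, the way you set things up would not close even after the obstacle is removed: you treat the two terms separately and bound the prefactor by $2$ in absolute value.

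The two terms share the prefactor. Since $F-\phi=D/\tilde E_k$, they combine into $D\,Q$ with $Q=\tilde E_k^{-1}\bigl(\dot D^{ij}((S^2)_{ij}+2S_{ij})-HD\bigr)$. Using $\dot{\tilde E}_l^{ij}(S^2)_{ij}=n\tilde E_1\tilde E_l-(n-l)\tilde E_{l+1}$, $\dot{\tilde E}_l^{ij}S_{ij}=l\tilde E_l$ and $H=n\tilde E_1+n$, all $\tilde E_1$ terms cancel identically, and
\[
Q=\frac{-(n-k-1)\tilde E_{k+2}+\bigl(2k+2-n+(n-k)\phi\bigr)\tilde E_{k+1}+(n-2k)\phi\,\tilde E_k}{\tilde E_k}.
\]
This is the cancellation you anticipate. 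It also produces an $\tilde E_{k+2}$ term, which your description of the rewritten expression omits. That term defeats any absolute-value estimate. Newton--Maclaurin only gives $\tilde E_{k+2}/\tilde E_k\le F^2\le C(1+t^{-1})$, so $|Q|$ is not bounded by $C(1+t^{-1/2})$. Since $\int_0^t s^{-1}\,ds=\infty$, the Gr\"onwall step fails.

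The missing idea is to keep signs rather than absolute values. Because $-(n-k-1)\tilde E_{k+2}\le 0$ and $\phi$ is bounded, one has the one-sided bound $Q\le C(\rho_-(\Omega),n,k)(F+1)$. Set $a=\int_{M_t}\mathrm{sgn}(D)\tilde E_k\,d\mu_t/\int_{M_t}\tilde E_k\,d\mu_t\in[-1,1]$. Then the zeroth-order part of Lemma \ref{intDevo} equals $\int_{M_t}(\mathrm{sgn}(D)-a)D\,Q\,d\mu_t$. Pointwise, $(\mathrm{sgn}(D)-a)D=|D|-aD\ge 0$, so the one-sided bound on $Q$ suffices. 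It gives at most $C(1+t^{-1/2})\int_{M_t}(|D|-aD)\,d\mu_t$, which equals $C(1+t^{-1/2})\int_{M_t}|D|\,d\mu_t$ because $\int_{M_t}D\,d\mu_t=0$.

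Neither of your workarounds replaces this. The Minkowski formulas control unweighted global integrals such as $\int\tilde E_1\tilde E_k$, whereas here the integrands carry $\mathrm{sgn}(D)$ and $|D|$. An estimate like $\int\tilde E_{k+2}|F-\phi|\le C(1+t^{-1/2})$ is additive. It would destroy the purely multiplicative bound $\int_{M_t}|D|\lesssim\int_{M_0}|D|$ that the proof of Theorem \ref{metricball} relies on.
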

\begin{proof}
    By Lemma \ref{intDevo}, we have \begin{align*}
    &\frac{\partial}{\partial t} \int_{M_t}|D| d\mu_t\nonumber\\
    =&\int_{M_t}\mathrm{sgn}(D)\dot{D}^{ij} \nabla_i\nabla_j Fd\mu_t\nonumber\\
    &+\int_{M_t}\frac{\mathrm{sgn}(D)\int_{M_t}\tilde{E}_kd\mu_t-\int_{M_t}\mathrm{sgn}(D)\tilde{E}_kd\mu_t}{\int_{M_t}\tilde{E}_k d\mu_t}\\&((F-\phi(t))\dot{D}^{ij}((S^2)^{i}_j+2S^i_j)+(\phi(t)-F)HD)d\mu_t.
    \end{align*}
We first estimate $\int_{M_t}\mathrm{sgn}(D)\dot{D}^{ij} \nabla_i\nabla_j Fd\mu_t$. Let $M_{+,t,\varepsilon}(t):=\{x:F-\phi(t)>\varepsilon\}$ and $M_{-,t,\varepsilon}:=\{x:F-\phi(t)<-\varepsilon\}$. We choose a set $\{\varepsilon_m\}$ of regular values of $F-\phi(t)$ and $\varepsilon_m$ tends to $0$ as $m$ tends to $\infty$. 
If $F=\phi(t)$ there holds $\dot{D}^{ij}=\tilde{E}_k\dot{F}^{ij}>0$. 
 Then we have \begin{align*}
    \int_{M_{+,t,\varepsilon_m}(t)} \mathrm{sgn}(D)\dot{D}^{ij} \nabla_i\nabla_j Fd\mu_t=-\int_{\{F-\phi(t)=\varepsilon_m\}}\frac{\dot{D}^{ij}\nabla_iF\nabla_jF}{|\nabla F|} d\mu_t\leq 0
\end{align*} for $m$ large enough.
Hence we have $$\int_{\{F-\phi(t)>0\}}\mathrm{sgn}(D)\dot{D}^{ij} \nabla_i\nabla_j Fd\mu_t\leq 0.$$ A similar argument yields $$\int_{\{F-\phi(t)<0\}}\mathrm{sgn}(D)\dot{D}^{ij} \nabla_i\nabla_j Fd\mu_t\leq 0.$$
Hence we have 
\begin{align*}
    &\frac{\partial}{\partial t} \int_{M_t}|D| d\mu_t\nonumber\\
    \leq&
    \int_{M_t}\frac{\mathrm{sgn}(D)\int_{M_t}\tilde{E}_kd\mu_t-\int_{M_t}\mathrm{sgn}(D)\tilde{E}_kd\mu_t}{\int_{M_t}\tilde{E}_k d\mu_t}((F-\phi(t))\dot{D}^{ij}((S^2)^{i}_j+2S^i_j)\\&+(\phi(t)-F)HD)d\mu_t\nonumber\\
    =&\int_{M_t}\frac{\mathrm{sgn}(D)\int_{M_t}\tilde{E}_kd\mu_t-\int_{M_t}\mathrm{sgn}(D)\tilde{E}_kd\mu_t}{\int_{M_t}\tilde{E}_k d\mu_t}(F-\phi(t))(\dot{D}^{ij}((S^2)^{i}_j+2S^i_j)-HD)d\mu_t\nonumber\\
    =&\int_{M_t}\frac{\mathrm{sgn}(D)\int_{M_t}\tilde{E}_kd\mu_t-\int_{M_t}\mathrm{sgn}(D)\tilde{E}_kd\mu_t}{\int_{M_t}\tilde{E}_k d\mu_t}D\frac{1}{\tilde{E}_k}(\dot{D}^{ij}((S^2)^{i}_j+2S^i_j)-HD)d\mu_t\nonumber.
    \end{align*}
Noticing that  $\dot{D}^{ij}=\dot{\tilde{E}}_{k+1}^{ij}-\phi(t)\dot{\tilde{E}}_{k}^{ij}$, hence, by direct calculation, we have 
\begin{align*}
    &\frac{1}{\tilde{E}_k}(\dot{D}^{ij}((S^2)^{i}_j+2S^i_j)-HD)\\=&\frac{1}{\tilde{E}_k}((2k+2-n+(n-k)\phi(t))\tilde{E}_{k+1}+(n-2k)\phi(t)\tilde{E}_k-(n-k-1)\tilde{E}_{k+2})\\
    \leq &C_1(\rho_-(\Omega),n,k)(F+1).
\end{align*}
Noticing that $$-1\leq\frac{\int_{M_t}\mathrm{sgn}(D)\tilde{E}_kd\mu_t}{\int_{M_t}\tilde{E}_k d\mu_t}\leq 1,$$ we have 
$$\frac{\mathrm{sgn}(D)\int_{M_t}\tilde{E}_kd\mu_t-\int_{M_t}\mathrm{sgn}(D)\tilde{E}_kd\mu_t}{\int_{M_t}\tilde{E}_k d\mu_t}D\geq 0.$$
Hence we have 
\begin{align*}
    \frac{\partial}{\partial t} \int_{M_t}|D|d\mu_t\leq \nonumber\int_{M_t}\frac{\mathrm{sgn}(D)\int_{M_t}\tilde{E}_kd\mu_t-\int_{M_t}\mathrm{sgn}(D)\tilde{E}_kd\mu_t}{\int_{M_t}\tilde{E}_k d\mu_t}DC_1(\rho_-(\Omega),n,k)(F+1)d\mu_t. 
\end{align*}
Hence, by Lemma \ref{uniformupperboundsonF}, we have 
\begin{align*}
    \frac{\partial}{\partial t} \int_{M_t}|D|d\mu_t\leq& C(\rho_-(\Omega),n,k)(1+\frac{1}{\sqrt{t}})\int_{M_t}\frac{\mathrm{sgn}(D)\int_{M_t}\tilde{E}_kd\mu_t-\int_{M_t}\mathrm{sgn}(D)\tilde{E}_kd\mu_t}{\int_{M_t}\tilde{E}_k d\mu_t}Dd\mu_t\\
    =&C(\rho_-(\Omega),n,k)(1+\frac{1}{\sqrt{t}})\int_{M_t}|D|d\mu_t, 
\end{align*}
from which we immediately complete the proof.
\end{proof}

\section{Characterization of geodesic balls via modified support measures}\label{chraofgeoball}
As introduced in \cite{Kohlmann1991, Federer1959}, a subset $\Omega\subseteq \mathbb{H}^{n+1}$ is called of positive reach if there exists $r>0$ such that for any $p\in \Omega^r$ there exists a  unique point $x\in\Omega$ such that $d_{\mathbb{H}^{n+1}}(p,\Omega)=d_{\mathbb{H}^{n+1}}(p,x)$. Moreover, for any $0<\varepsilon<r$, Walter \cite{Walter74, Walter76} proved that $\partial \Omega^\varepsilon$ is of class $C^{1,1}$.
We first introduce a characterization for h-convex bodies.
\begin{lemma}
    Suppose that $\Omega$ is a compact domain of positive reach. Then the following statements are equivalent: 

    (1)$k_i(v)\geq 1$, $\forall v\in \tilde{\mathcal{N}}_\Omega$.

    (2) $\Omega\in\mathcal{H}(\mathbb{H}^{n+1})$.
\end{lemma}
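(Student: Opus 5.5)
The direction (2)$\Rightarrow$(1) is already available: it was proved in \cite{Zhang2026} that for any $\Omega\in\mathcal{H}(\mathbb{H}^{n+1})$ one has $k_i(v)\geq 1$ for all $v\in\tilde{\mathcal{N}}_\Omega$. So the work is in (1)$\Rightarrow$(2). The plan is to pass to a $C^{1,1}$ outer parallel body, show it is h-convex, and then let the parallel distance tend to zero.

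\textbf{Step 1: the parallel hypersurfaces.} Fix $0<\varepsilon<r$, where $r$ is the reach of $\Omega$. By Walter \cite{Walter74, Walter76}, $M^\varepsilon=\partial\Omega^\varepsilon$ is $C^{1,1}$. By \eqref{waipingxingqvlv}, at $\mathcal{L}^n$-a.e. point $\exp_{\Pi(v)}(\varepsilon v)$ its principal curvatures are
\begin{align*}
\frac{\cosh\varepsilon\, k_i(v)+\sinh\varepsilon}{\sinh\varepsilon\, k_i(v)+\cosh\varepsilon}.
\end{align*}
This map is increasing in $k_i$ and takes the value $1$ at $k_i=1$. It also sends $\infty$ to $\coth\varepsilon$. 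Hence (1) gives principal curvatures in $[1,\coth\varepsilon]$ almost everywhere on $M^\varepsilon$. Since $F_\Omega|_{\partial\Omega^\varepsilon}$ is bi-Lipschitz, the $\mathcal{L}^n$-null exceptional set of $\mathcal{N}_\Omega$ pulls back to a null set on $M^\varepsilon$.

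\textbf{Step 2: a $C^{1,1}$ hypersurface with a.e. principal curvatures $\geq 1$ bounds an h-convex body.} This is the main obstacle, and I would handle it in two stages.

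First, show that $\Omega^\varepsilon$ is convex. Take the radial graph $\rho_\varepsilon$ over a point $o$. I would first have to check that $M^\varepsilon$ is star-shaped, for instance using that positive reach plus nonnegative curvature forces convexity via the local support property. The $C^{1,1}$ formula for $h_{ij}$ in Lemma \ref{higherregularapproximation} then holds a.e. with $h\geq g$.

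Second, upgrade to h-convexity. As in Step 3 of Lemma \ref{higherregularapproximation}, mollify $\rho_\varepsilon$ by spherical convolution, using the equivalence \eqref{equihconvex} with $\delta$ replaced by a small negative number $-\eta$. Because $\overline{\nabla}^2 f_m*\rho=f_m*\overline{\nabla}^2\rho$ and the lower-order terms converge uniformly, the mollified hypersurfaces satisfy $h>(1-\eta)g$ for large $m$. A smooth closed hypersurface with $h>(1-\eta)g>0$ bounds a convex body. Letting $m\to\infty$ and then $\eta\to 0$ shows that $\Omega^\varepsilon$ is convex, and that at each boundary point the horosphere tangent to $M^\varepsilon$ bounds a supporting region. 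To justify that last claim I would use a comparison argument. In a neighbourhood, the $C^{1,1}$ graph over the tangent horosphere solves a differential inequality whose a.e. Hessian bound says it lies inside the horoball. Integrating this along geodesics, which is valid since the gradient is Lipschitz, gives the local inclusion. Convexity then promotes the local supporting horoball to a global one. If this ODE comparison becomes delicate, the alternative is to apply the horospherical support function characterization: show $A[\varphi_{\Omega^\varepsilon}]\geq 0$ from the a.e. curvature bound, and invoke the lemma characterizing $\log\varphi$ with $A[\varphi]\geq 0$.

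\textbf{Step 3: pass to the limit.} Since $\Omega^\varepsilon\to\Omega$ in the Hausdorff sense as $\varepsilon\to 0^+$, and the inner radii are uniformly bounded below (because $\Omega\subseteq\Omega^\varepsilon$ has nonempty interior), Theorem \ref{hausdorffofhconvex} shows that the limit $\Omega$ is h-convex. This proves (2).
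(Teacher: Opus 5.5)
Your Steps 1 and 3 are sound and match the paper's reduction. You pass to the $C^{1,1}$ outer parallel bodies, where \eqref{waipingxingqvlv} preserves $\kappa_i\ge 1$ a.e., and then recover $\Omega$ by Blaschke selection. The gap is in Step 2, which is the entire content of the lemma. There are three problems.

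\textbf{Circularity.} The radial-graph argument needs $M^\varepsilon$ to be star-shaped. Your justification is that positive reach plus nonnegative curvature forces convexity. That is exactly what the mollification stage was supposed to prove, so the convexity stage is circular.

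\textbf{The local-to-global step is false.} You claim that convexity promotes a local supporting horoball at $p$ to a global one. In the upper half-space model, let $B$ be a geodesic ball with lowest point $p=(0,\dots,0,1)$, and let $z=(L,0,\dots,0,\tfrac12)$ with $L$ large. Then $B\subset\{|x|\ge 1\}$, and $z$ lies strictly on $B$'s side of the totally geodesic tangent hyperplane $\{|x|=1\}$ at $p$. Hence the geodesic convex hull of $B\cup\{z\}$ agrees with $B$ near $p$, so it lies locally inside the tangent horoball $\{x_{n+1}\ge 1\}$. Yet it contains $z$, which is outside that horoball. Passing from local to global support needs the curvature bound along the whole boundary, in the manner of a rolling theorem; convexity alone does not suffice.

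\textbf{The fallback is empty.} The paper shows $A[\varphi_K]\ge 0$ for every compact $K$, with $\log\varphi_K$ the horospherical support function of the h-convex hull of $K$. So knowing $A[\varphi_{\Omega^\varepsilon}]\ge 0$ tells you nothing about whether $\Omega^\varepsilon$ is h-convex.

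\textbf{The missing idea} is to choose the model depending on $p$; this settles convexity and the local-to-global issue at once.
\begin{enumerate}
\item For $p\in M^\varepsilon$, let $\xi_p$ be the ideal point in the direction $-\nu(p)$, and use the upper half-space model with $\xi_p=\infty$.
\item The horosphere through $p$ centred at $\xi_p$ is then the horizontal hyperplane $\{x_{n+1}=p_{n+1}\}$. Since the model is conformal, this is exactly the Euclidean tangent plane of $M^\varepsilon$ at $p$.
\item For the metric $|dx|^2/x_{n+1}^2$, at a.e.\ point of $M^\varepsilon$ one has $\kappa_i^{\mathbb{H}}=x_{n+1}\kappa_i^{\mathbb{R}}-\langle\nu^{\mathbb{R}},e_{n+1}\rangle$, with $\nu^{\mathbb{R}}$ the Euclidean outer unit normal. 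So $\kappa_i^{\mathbb{H}}\ge 1$ gives $\kappa_i^{\mathbb{R}}\ge (1+\langle\nu^{\mathbb{R}},e_{n+1}\rangle)/x_{n+1}\ge 0$.
\item A closed $C^{1,1}$ hypersurface with a.e.\ nonnegative Euclidean second fundamental form bounds a Euclidean convex body. Locally it is a $W^{2,\infty}$ graph with nonnegative Hessian, and Tietze--Nakajima then gives global convexity; the paper cites Hug for this step.
\item Hence the tangent plane at $p$ supports $\Omega^\varepsilon$ globally, i.e.\ $\Omega^\varepsilon\subset\{x_{n+1}\ge p_{n+1}\}$, which is a horoball through $p$.
\end{enumerate}
Since $p$ was arbitrary, $\Omega^\varepsilon$ is h-convex. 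This route needs neither star-shapedness nor mollification.
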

\begin{proof}
    (2)$\implies $(1): See \cite{Zhang2026}.

    (1)$\implies$(2): By the Blaschke selection type Theorem \ref{hausdorffofhconvex} and Lemma \ref{outerhconvex}, we can assume that $M=\partial \Omega$ is of class $C^{1,1}$ without loss of generality. For any $p\in M$, we will prove that there exists a horosphere that supports $\Omega$ at $p$. Denote by  $v(p)$ the unit outer normal vector of $M$ at $p$. Denote by $\xi_p$ the ideal point of the direction of $-v(p)$. Let $\overline{\mathbb{R}}^{n+1}_+$ be the half plane model of $\mathbb{H}^{n+1}$ in which $\xi_p$ corresponds to $\infty$. Then the horosphere $S$  passing through $p$ with center $\xi_p$ is a Euclidean plane. For any Alexandrov differentiable point $p'$ of $M$, by the calculation in \cite{Guan2009}, we know that the Euclidean second fundamental form of $M$ at $p'$ is positive semidefinite. Hence, by the statement in  \cite{Hug2022}, we conclude $\Omega$ is a Euclidean convex body, from which we have  that $S$ supports $\Omega$ at $p$. Hence we have that $\Omega$ is h-convex.
\end{proof}
Now we consider the rigidity theorem for modified support measures. We first consider the rigidity theorem for modified area measures.

\begin{proof}[Proof of Theorem \ref{arearigidity}]
    Suppose that $\Omega\in\mathcal{H}(\mathbb{H}^{n+1})$ and  
\begin{align*}
     \tilde S_{0}(\Omega,\cdot)=c\tilde{S}_{1}(\Omega,\cdot),
\end{align*}
Choose an interior point of $\Omega$ as the origin.
By \eqref{areameasure}, we have 
\begin{align*}
    \mathrm{tr} A[\varphi_\Omega]=\frac{1}{c\varphi_\Omega}d\mathcal{L}^n,
\end{align*}
from which we have 
\begin{align*}
    [\Delta \varphi_\Omega]=\left(\frac{1}{c\varphi_\Omega}+n\left(\frac{|\overline\nabla\varphi_\Omega|^2}{2\varphi_\Omega}-\frac{\varphi_\Omega-\varphi_\Omega^{-1}}{2}\right)\right)d\mathcal{L}^n.
\end{align*}
Then, by the semi-convexity of $\varphi_\Omega$ we have $$\frac{1}{c\varphi_\Omega}+n\left(\frac{|\overline\nabla\varphi_\Omega|^2}{2\varphi_\Omega}-\frac{\varphi_\Omega-\varphi_\Omega^{-1}}{2}\right)\in\mathcal{L}^\infty(\mathbb{S}^n),$$ from which we have that $\varphi_\Omega$ is of class $C^{1,1}$. Hence we have \begin{align*}
    \varphi_\Omega\Delta \varphi_\Omega=\frac{1}{c}+n\left(\frac{|\overline\nabla\varphi_\Omega|^2}{2}-\frac{\varphi_\Omega^2-1}{2}\right).
\end{align*}
By standard elliptic regularity and Schauder bootstrapping, we have that $\varphi_\Omega$ is of class $C^\infty$.
Now, let $\varepsilon>0$ and $\varphi^\varepsilon_\Omega=e^\varepsilon\varphi_\Omega$. Then, by direct calculation, we have 
\begin{align*}
A[\varphi^\varepsilon_\Omega]=e^\varepsilon A[\varphi_\Omega]+\frac{e^\varepsilon-e^{-\varepsilon}}{2\varphi_\Omega}g>0.    
\end{align*} 
Moreover, we have 
\begin{align*}
\mathrm{tr} A[\varphi^\varepsilon_\Omega]=\frac{e^{2\varepsilon}}{c\varphi_\Omega^\varepsilon}+n\frac{e^{2\varepsilon}-1}{2\varphi_\Omega^\varepsilon}.    
\end{align*}
Then, by \cite[Theorem 8.1 (7)]{LiXu2022} and \cite[Proposition 3.1]{LiWanXv2024}, we have that $\Omega^\varepsilon$ is a geodesic ball, from which we complete our proof.
\end{proof}

Having established the rigidity theorem above, we now consider the rigidity theorem for modified support measures. 

\begin{proof}[Proof of Theorem \ref{metricball}]

Let $\Omega\in\mathcal{H}(\mathbb{H}^{n+1})$ satisfy 
\begin{align*}
     \tilde\Phi_{n-k-1}(\Omega,\cdot)=c\tilde{\Phi}_{n-k}(\Omega,\cdot).   
\end{align*}
Assume that $k< n-1$. We will prove that $$\tilde{\Phi}_{n-k-1}(\Omega,\cdot)=c\tilde{\Phi}_{n-k}(\Omega,\cdot)\implies\tilde{\Phi}_{n-k-2}(\Omega,\cdot)=\frac{(k+1)(n-k-1)}{(k+2)(n-k)}c\tilde{\Phi}_{n-k-1}.$$
Now, suppose that  $$\tilde{\Phi}_{n-k-1}(\Omega,\cdot)=c\tilde{\Phi}_{n-k}(\Omega,\cdot).$$ 
Then we have 
\begin{align}\label{tiaojian3}
    \int_{\mathcal{N}_\Omega}|g_\Omega(v)\tilde{E}_{k+1}(v)-c'g_{\Omega}(v)\tilde{E}_k(v)|\mathcal{L}^n v=0,
\end{align}
here $c'=\binom{n}{k}\binom{n}{k+1}^{-1}c=\frac{k+1}{n-k}c$. Actually, we have 
$$c'=\frac{\int_{\mathcal{N}_\Omega}g_\Omega(v)\tilde{E}_{k+1}(v)\mathcal{L}^n v}{\int_{\mathcal{N}_\Omega}g_{\Omega}(v)\tilde{E}_k(v)\mathcal{L}^n v}$$
We first prove that there exists a sequence of smooth strictly h-convex bodies $\{\Omega_j\}_{j\in\mathbb{N}}$ such that $\Omega_j\rightarrow\Omega$ as $j$ tends to $\infty$ and 
\begin{align*}
\int_{\partial\Omega_j} |\tilde{E}_{k+1}-c'\tilde{E}_k |d\mu_j\rightarrow 0, j\rightarrow\infty.    
\end{align*}

We consider the outer parallel set $\Omega^\varepsilon$ of $\Omega$, $\varepsilon<1$.
For any $v\in\mathcal{N}_\Omega$, let $f_\varepsilon(v)=\Phi_{\Pi(v)}(\varepsilon v)$, where $\Phi$ is the geodesic flow. Then $f_\varepsilon$ is a bi-Lipschitz map from $\mathcal{N}_\Omega$ to $\mathcal{N}_{\Omega^\varepsilon}$. 
By calculations in \cite{Kohlmann1991}, we have 
$$k_i(f_\varepsilon(v))-1=\frac{\sinh \varepsilon +\cosh\varepsilon k_i(v)}{\cosh\varepsilon+\sinh\varepsilon k_i(v)}-1=\frac{e^{-\varepsilon}(k_i(v)-1)}{\cosh\varepsilon+\sinh\varepsilon k_i(v)},$$
$$g_{\Omega^\varepsilon}(f_\varepsilon(v))=\prod\limits_{i=1}\limits^n \frac{\cosh \varepsilon+\sinh\varepsilon k_i(v)}{\sqrt{(\cosh \varepsilon+\sinh \varepsilon k_i(v))^2+(\sinh\varepsilon+\cosh\varepsilon k_i(v))^2}}$$ 
 and
 $$J_n(f_\varepsilon)(v)=\prod\limits_{i=1}\limits^n \frac{\sqrt{(\cosh \varepsilon+\sinh \varepsilon k_i(v))^2+(\sinh\varepsilon+\cosh\varepsilon k_i(v))^2}}{\sqrt{1+k_i^2(v)}},$$
 here, if $k_i=\infty$, the calculation accords with the algebraic conventions in Section \ref{preli}.   
Hence, by direct calculation, we have 
\begin{align*}
&\binom{n}{l}J_n(f_\varepsilon)(v)g_{\Omega^{\varepsilon}}(f_\varepsilon(v))\tilde{E}_l(f_\varepsilon(v))\\=&e^{-l\varepsilon}\prod_{i=1}^n \frac{\cosh\varepsilon+\sinh\varepsilon k_i(v)}{\sqrt{1+k_i^2(v)}}\sum_{|I|=l,I\subseteq\{1,\dots,n\}}\prod_{i\in I}\frac{k_i(v)-1}{\cosh\varepsilon+\sinh\varepsilon k_i(v)}\\
=&e^{-l\varepsilon}g_{\Omega}(v)\sum_{|I|=l,I\subseteq\{1,\dots,n\}}\prod_{i\notin I}^n (\cosh\varepsilon+\sinh\varepsilon k_i(v))\prod_{i\in I}(k_i(v)-1)\\
=&e^{-l\varepsilon}g_{\Omega}(v)\sum_{|I|=l,I\subseteq\{1,\dots,n\}}\prod_{i\notin I}^n (e^\varepsilon+\sinh\varepsilon (k_i(v)-1))\prod_{i\in I}(k_i(v)-1)\\
=&e^{-l\varepsilon}g_\Omega(v)\sum_{m=0}^{n-l}e^{(n-l-m)\varepsilon}\sinh^m\varepsilon\sum_{|J|=l+m,J\subseteq\{1,\dots,n\}}\sum_{|I|=l,I\subseteq J}\prod_{i\in J}(k_i(v)-1)\\
=&e^{-l\varepsilon}g_\Omega(v)\sum_{m=0}^{n-l}e^{(n-l-m)\varepsilon}\sinh^m\varepsilon\binom{l+m}{l}\sum_{|J|=l+m,J\subseteq\{1,\dots,n\}}\prod_{i\in J}(k_i(v)-1)\\
=&e^{-l\varepsilon}g_\Omega(v)\sum_{m=0}^{n-l}e^{(n-l-m)\varepsilon}\sinh^m\varepsilon\binom{l+m}{l}\binom{n}{l+m}\tilde{E}_{l+m}(v).
\end{align*}
Hence we have 
\begin{align*}
    J_n(f_\varepsilon)(v)g_{\Omega^{\varepsilon}}(f_\varepsilon(v))\tilde{E}_l(f_\varepsilon(v))=\sum_{m=0}^{n-l}\binom{n-l}{m}e^{(n-2l-m)\varepsilon}\sinh^m\varepsilon g_\Omega(v)\tilde{E}_{l+m}(v),
\end{align*}
which implies 
\begin{align*}
    &\int_{\mathcal{N}_\Omega}|J_n(f_\varepsilon)(v)g_{\Omega^{\varepsilon}}(f_\varepsilon(v))\tilde{E}_l(f_\varepsilon(v))-g(v)\tilde{E}_l(v)|d\mathcal{L}^n v\\
    \leq&\int_{\mathcal{N}_\Omega}|e^{(n-2l)\varepsilon}-1| g_\Omega(v)\tilde{E}_{l}(v)d\mathcal{L}^n v\\&+\int_{\mathcal{N}_\Omega}\sum_{m=1}^{n-l}\binom{n-l}{m}e^{(n-2l-m)\varepsilon}\sinh^m\varepsilon g_\Omega(v)\tilde{E}_{l+m}(v)d\mathcal{L}^n v\\
    \leq&C'(n,\rho_-(\Omega))(|e^{(n-2l)\varepsilon}-1|+\sinh \varepsilon).
\end{align*}
 By \eqref{tiaojian3} and the area formula \cite{Federer}, we have 
\begin{align*}
    &\int_{\partial\Omega^\varepsilon} |\tilde{E}_{k+1}-c'\tilde{E}_k|d\mu\\
    =&\int_{\mathcal{N}_{\Omega^\varepsilon}}g_{\Omega^\varepsilon}(v)|\tilde{E}_{k+1}-c'\tilde{E}_k|(v)\mathcal{L}^n v\\
    =&\int_{\mathcal{N}_{\Omega}}g_{\Omega^\varepsilon}(f_\varepsilon(v))J_n(f_\varepsilon)(v)|\tilde{E}_{k+1}-c'\tilde{E}_k|(f_\varepsilon(v))\mathcal{L}^n v\\
    \leq&\int_{\mathcal{N}_{\Omega}}|g_{\Omega^\varepsilon}(f_\varepsilon(v))J_n(f_\varepsilon)(v)\tilde{E}_{k+1}(f_\varepsilon(v))-g_\Omega(v)\tilde{E}_{k+1}(v)|\mathcal{L}^n v\\
    &+\int_{\mathcal{N}_{\Omega}}|g_{\Omega}(v)\tilde{E}_{k+1}(v)-c'g_\Omega(v)\tilde{E}_{k}(v)|\mathcal{L}^n v\\
    &+c'\int_{\mathcal{N}_{\Omega}}|g_\Omega(v)\tilde{E}_{k}(v)-g_{\Omega^\varepsilon}(f_\varepsilon(v))J_n(f_\varepsilon)(v)\tilde{E}_{k}|\mathcal{L}^n v\\
    =&\int_{\mathcal{N}_{\Omega}}|g_{\Omega^\varepsilon}(f_\varepsilon(v))J_n(f_\varepsilon)(v)\tilde{E}_{k+1}(f_\varepsilon(v))-g_\Omega(v)\tilde{E}_{k+1}(v)|\mathcal{L}^n v\\
    &+c'\int_{\mathcal{N}_{\Omega}}|g_\Omega(v)\tilde{E}_{k}(v)-g_{\Omega^\varepsilon}(f_\varepsilon(v))J_n(f_\varepsilon)(v)\tilde{E}_{k}|\mathcal{L}^n v\\
    \leq &C(n,\rho_-(\Omega))(|e^{(n-2k)\varepsilon}-1|+|e^{(n-2k-2)\varepsilon}-1|+\sinh \varepsilon)\rightarrow 0, \quad\text{as}\quad \varepsilon\rightarrow 0.
\end{align*}
For any $\varepsilon>0$, noticing that $\partial\Omega^\varepsilon$ is of class $C^{1,1}$, by Lemma \ref{higherregularapproximation}, there exists a sequence of smooth strictly h-convex bodies $\{\Omega^\varepsilon_i\}$ such that $\Omega_i^\varepsilon\rightarrow\Omega^\varepsilon$ in the sense of $C^{1,\alpha}\cap W^{2,k+1}$. Then we have 
\begin{align*}
    \int_{\partial\Omega_i^\varepsilon}|\tilde{E}_{k+1}-c'\tilde{E}_k| d\mu\rightarrow\int_{\partial\Omega^\varepsilon}|\tilde{E}_{k+1}-c'\tilde{E}_k| d\mu,\,\text{as} \, i\rightarrow\infty.
\end{align*}
Then,  considering the smooth approximation of $\Omega^\varepsilon$ in the sense of $C^{1,\alpha}\cap W^{2,k+1}$ and letting $\varepsilon$ tend to $0$, 
we have that there exists a sequence of smooth strictly h-convex bodies $\{\Omega_j\}_{j\in\mathbb{N}}$ such that $\Omega_j\rightarrow\Omega$ as $j$ tends to $\infty$ and 
\begin{align*}
\int_{\partial\Omega_j} |\tilde{E}_{k+1}-c'\tilde{E}_k |d\mu_j\rightarrow 0, j\rightarrow\infty.    
\end{align*}
By Theorem \ref{weakconsupport} and the uniform lower bounds on the curvature integrals, we have 
$$\frac{\int_{\partial\Omega_j} \tilde{E}_{k+1}d\mu_j}{\int_{\partial\Omega_j} \tilde{E}_k d\mu_j}\rightarrow c',\, \text{as}\, j\rightarrow\infty,$$
from which we have 
\begin{align*}
    \int_{\partial\Omega_j} \left|\tilde{E}_{k+1}-\frac{\int_{\partial\Omega_j} \tilde{E}_{k+1}d\mu_j}{\int_{\partial\Omega_j} \tilde{E}_k d\mu_j}\tilde{E}_k \right|d\mu_j\rightarrow 0, j\rightarrow\infty.
\end{align*}

Let  $M_{j,t}$ be the solutions of \eqref{smoothflow1} with initial hypersurface $M_j=\partial\Omega_j$, $\phi_j$ be the corresponding global term and $D_{j,t}=\tilde{E}_{k+1}-\phi_j(t)\tilde{E}_k$. In the following we assume that $j$ is large enough such that $d_{\mathbb{H}^{n+1}}(\Omega_j,\Omega)$ is small enough. Let $t\in[0,1)$. By Lemma \ref{lemmaupperboundsonintD}, we have 
$$\int_{M_{j,t}}|D_{j,t}|d\mu_{j,t}\leq e^{2C(\rho_-(\Omega),n,k)}\int_{M_j}|D_{j,0}|d\mu_j.$$
 If $\tilde{E}_{k+2}-\phi_j(t)\tilde{E}_{k+1}\geq0$, by the Newton-Maclaurin inequality and Lemma \ref{uniformupperboundsonF},  we have $$\tilde{E}_{k+2}-\phi_j(t)\tilde{E}_{k+1}\leq \frac{\tilde{E}_{k+1}}{\tilde{E}_k}(\tilde{E}_{k+1}-\phi_j(t)\tilde{E}_{k})\leq C(\rho_-(\Omega),n,k)(1+\frac{1}{{\sqrt t}})(\tilde{E}_{k+1}-\phi_j(t)\tilde{E}_{k}).$$
Choose the center of an inner ball of $\Omega$ as the origin $q$. By Theorem \ref{shiftedminkow}, we have 
\begin{align*}
    &\int_{M_{j,t}}S_q(\tilde{E}_{k+2}-\phi_j(t)\tilde{E}_{k+1})_-d\mu_{j,t}\\
    =&\int_{M_{j,t}}S_q(\tilde{E}_{k+2}-\phi_j(t)\tilde{E}_{k+1})_+d\mu_{j,t}-\int_{M_{j,t}}(\cosh d_q-S_q)(\tilde{E}_{k+1}-\phi_j(t)\tilde{E}_{k})d\mu_{j,t}.
\end{align*}
Then, noticing that $$\int_{M_{j,t}}\tilde{E}_{k+1}-\phi_j(t)\tilde{E}_{k}d\mu_{j,t}=0$$ 
and $\cosh d_q-S_q$, $S_q$ have uniform lower and upper bounds (see the argument in \cite{Kohlmann1998}) depending on $\rho_-(\Omega)$,
 we have 
\begin{align}\label{controll}
    \int_{M_{j,t}}|\tilde{E}_{k+2}-\phi_j(t)\tilde{E}_{k+1}|d\mu_{j,t}\leq C'(\rho_-(\Omega),n,k)(1+\frac{1}{{\sqrt t}})\int_{M_{j,t}}|\tilde{E}_{k+1}-\phi_j(t)\tilde{E}_{k}|d\mu_{j,t}. 
\end{align}

By Corollary \ref{yanhuajuli}, we have $\Omega_{j,t}\rightarrow\Omega_j$ as $t\rightarrow 0$. Then, by Theorem \ref{weakconsupport} and Corollary \ref{yanhuajuli}, we have $\phi_j(t)\rightarrow\phi_j(0)$ uniformly as $t\rightarrow 0$. 
Now, we first fix $t$ small enough such that $|\phi_j(t)-\phi_j(0)|$ is small enough uniformly for all $j\in\mathbb{N}$. 
Next, we choose $j$ large enough such that $|\phi_j(0)-c'|$ is small enough and $$C'(\rho_-(\Omega),n,k)(1+\frac{1}{{\sqrt t}})\int_{M_{j,t}}|\tilde{E}_{k+1}-\phi_j(t)\tilde{E}_{k}|d\mu_{j,t}$$ is small enough.
Notice that  
\begin{align*}
    &\int_{M_{j,t}}|\tilde{E}_{k+2}-c'\tilde{E}_{k+1}|d\mu_{j,t}\\
    \leq&\int_{M_{j,t}}|\tilde{E}_{k+2}-\phi_j(t)\tilde{E}_{k+1}|d\mu_{j,t}+\int_{M_{j,t}}|\phi_j(t)\tilde{E}_{k+1}-\phi_j(0)\tilde{E}_{k+1}|d\mu_{j,t}\\&+\int_{M_{j,t}}|\phi_j(0)\tilde{E}_{k+1}-c'\tilde{E}_{k+1}|d\mu_{j,t}\\
    \leq& C'(\rho_-(\Omega),n,k)(1+\frac{1}{{\sqrt t}})\int_{M_{j,t}}|\tilde{E}_{k+1}-\phi_j(t)\tilde{E}_{k}|d\mu_{j,t}+\int_{M_{j,t}}|\phi_j(t)-\phi_j(0)|\tilde{E}_{k+1}d\mu_{j,t}\\
    &+\int_{M_{j,t}}|\phi_j(0)-c'|\tilde{E}_{k+1}d\mu_{j,t}.
\end{align*}

Noticing that $\int_{M_{j,t}}\tilde{E}_{k+1}d\mu_{j,t}\leq C(n,k,\rho_-(\Omega))$, then we can choose $t$ and $j$ such that 
\begin{align*}
    &\int_{M_{j,t}}|\tilde{E}_{k+2}-c'\tilde{E}_{k+1}|d\mu_{j,t}\leq \varepsilon.
\end{align*}
Hence we can choose a sequence of smooth strictly h-convex
 bodies $\{\Omega'_j\}$ such that $\Omega'_j\rightarrow\Omega$ and 
 \begin{align*}
     \int_{\partial\Omega'_j}|\tilde{E}_{k+2}-c'\tilde{E}_{k+1}|d\mu_{j}\rightarrow 0, j\rightarrow\infty.
 \end{align*}
Then, by Theorem \ref{weakconsupport}, we have 
\begin{align*}
    \tilde{\Phi}_{n-k-2}(\Omega,\cdot)=\frac{(k+1)(n-k-1)}{(k+2)(n-k)}c\tilde{\Phi}_{n-k-1}(\Omega,\cdot).
\end{align*}
Repeating the above process multiple times, we can obtain 
$$\tilde{\Phi}_{0}(\Omega,\cdot)=c''(n,k)c\tilde{\Phi}_{1}(\Omega,\cdot).$$
Then, by Theorem \ref{arearigidity}, we obtain that $\Omega$ is a geodesic ball.
\end{proof}

\section{Weak flows and Alexandrov-Fenchel type inequalities for h-convex domains}\label{af}

We next prove Theorem \ref{weakflowthm}.

\begin{proof}[Proof of Theorem \ref{weakflowthm}]
Suppose that $\Omega\in\mathcal{H}(\mathbb{H}^{n+1})$. Let $\{\Omega_j\}$ be a sequence of smooth strictly h-convex bodies such that $\Omega_j\rightarrow\Omega$ in the Hausdorff sense as $j$ tends to $\infty$.  
Consider the flow \begin{align}
\left\{\begin{aligned} 
\frac{\partial}{\partial t} X_j(x, t) & =\left(\phi(t)-\frac{\tilde{E}_{k+1}}{\tilde{E}_k}(W)\right) \nu(x, t), \\ 
X_j(\cdot,0) & =X_{j,0}(\cdot).\end{aligned}\right.\label{smoothflow}
\end{align}
By Theorem \ref{ACWsmoothflow}, we have that, for any $j\in\mathbb{N}$,  the flow \eqref{smoothflow} exists for all time and preserves strict h-convexity.  

By Corollary \ref{yanhuajuli}, Lemma \ref{stabilityofflow}  and  arguments similar to those in \cite{Andrews2013}, we immediately obtain that the flow \eqref{flow} is well defined and 
\begin{align*}
    d_{\mathbb{H}^{n+1}}(\Omega_t,\Omega_s)\leq C(\rho_-(\Omega),n,k)(s-t+\sqrt{s-t}).
\end{align*}

 By Theorem \ref{hausdorffofhconvex} and Theorem \ref{ACWsmoothflow} we have that $\Omega_t$ is h-convex for all $t$. By the continuity of quermassintegrals with respect to the Hausdorff distance, the flow \eqref{flow} preserves the $k$-th modified Quermassintegral and makes the $k+1$-th modified Quermassintegral decrease.

By Theorem \ref{shiftedminkow} we obtain that 
$$\tilde\Phi_i(\Omega)\geq c'(\rho_-(\Omega))\tilde{\Phi}_n(\Omega)>c(\rho_-(\Omega))>0,\, 0\leq i\leq n.$$

If $\Omega$ is not a geodesic ball, by Theorem \ref{metricball} we assume that there exists a  Borel set $\beta\subseteq T^1\mathbb{H}^{n+1}$ such that 
                         $$\int_{\beta\cap\mathcal{N}_\Omega} g(v)\left(\tilde{E}_{k+1}-\frac{k+1}{n-k}\frac{\tilde\Phi_{n-k-1}(\Omega)}{\tilde\Phi_{n-k}(\Omega)}\tilde{E}_k\right)d\mathcal{L}^nv>0.$$
Then we can choose $f\in C(T^1\mathbb{H}^{n+1})$ such that $0<f<1$ and 
$$\int_{\mathcal{N}_\Omega}f g(v)\left(\tilde{E}_{k+1}-\frac{k+1}{n-k}\frac{\tilde\Phi_{n-k-1}(\Omega)}{\tilde\Phi_{n-k}(\Omega)}\tilde{E}_k\right)d\mathcal{L}^nv>0.$$

By \cite[Remark 4.1]{Zhang2026} we have $0<c_1(\rho_-(\Omega))<\int_{M_{j,t}}\tilde{E}_ld\mu<c_2(\rho_-(\Omega))$ for all $1\leq l\leq n$.
 By \cite[Equation (1.22)]{AndrewsChenWei2021}, Theorem \ref{weakconsupport}, Corollary \ref{yanhuajuli} and the argument above, there exist $J$ large enough and $0<T$ which is small enough such that
\begin{align*}-\frac{\partial}{\partial t}\tilde{W}_{k+1}(\Omega_{j,t})&=\int_{M_{j,t}}\tilde{E}_k\left(\frac{\tilde{E}_{k+1}}{\tilde{E}_k}-\phi_j(t)\right)^2 d\mu\\
&\geq \frac{1}{\int_{M_{j,t}}\tilde{E}_kd\mu}
\left(\int_{M_{j,t}}\left|\tilde{E}_k\left(\frac{\tilde{E}_{k+1}}{\tilde{E_k}}-\frac{k+1}{n-k}\frac{\tilde\Phi_{n-k-1}(\Omega_{j,t})}{\tilde\Phi_{n-k}(\Omega_{j,t})}\right)\right|d\mu\right)^2\\
&=\frac{1}{\int_{M_{j,t}}\tilde{E}_kd\mu}
\left(\int_{M_{j,t}}\left|\tilde{E}_{k+1}-\frac{k+1}{n-k}\frac{\tilde\Phi_{n-k-1}(\Omega_{j,t})}{\tilde\Phi_{n-k}(\Omega_{j,t})}\tilde{E}_k\right|d\mu\right)^2\\
&\geq C(\rho_-(\Omega))\left(\int_{M_{j,t}}\left|\tilde{E}_{k+1}-\frac{k+1}{n-k}\frac{\tilde\Phi_{n-k-1}(\Omega_{j,t})}{\tilde\Phi_{n-k}(\Omega_{j,t})}\tilde{E}_k\right|d\mu\right)^2\\
&\geq C(\rho_-(\Omega))\left(\int_{\mathcal{N}_{\Omega_{j,t}}}fg_{\Omega_{j,t}}(v)\left|\tilde{E}_{k+1}-\frac{k+1}{n-k}\frac{\tilde\Phi_{n-k-1}(\Omega_{j,t})}{\tilde\Phi_{n-k}(\Omega_{j,t})}\tilde{E}_k\right|d\mathcal{L}^n\right)^2\\
&\geq C(\rho_-(\Omega))\left(\int_{\mathcal{N}_{\Omega_{j,t}}}fg_{\Omega_{j,t}}(v)\left(\tilde{E}_{k+1}-\frac{k+1}{n-k}\frac{\tilde\Phi_{n-k-1}(\Omega_{j,t})}{\tilde\Phi_{n-k}(\Omega_{j,t})}\tilde{E}_k\right)d\mathcal{L}^n\right)^2\\
& \geq C'(\rho_-(\Omega))\left(\int_{\mathcal{N}_{\Omega}}fg_{\Omega}(v)\left(\tilde{E}_{k+1}-\frac{k+1}{n-k}\frac{\tilde\Phi_{n-k-1}(\Omega)}{\tilde\Phi_{n-k}(\Omega)}\tilde{E}_k\right)d\mathcal{L}^n\right)^2>0, 
\end{align*}
for all $j>J$ and $t\in[0,T]$. 
Hence we have $$\tilde{W}_{k+1}(\Omega_{j,t})-\tilde{W}_{k+1}(\Omega_j)<-C(\Omega)t<0,$$
from which we have that $\tilde{W}_{k+1}(\Omega_t)$ decreases strictly on $[0,T]$.

By Alexandrov reflection arguments in \cite{AndrewsChenWei2021, Zhang2026}, the set  
$\{\Omega_t\}_{t\geq 0}$ is bounded. Hence, by Theorem \ref{hausdorffofhconvex}, there exist a sequence $\{t_i\}\subseteq [0,\infty)$ and 
 an h-convex body $\Omega_{\infty}$ such that $\Omega_{t_i}\rightarrow\Omega_\infty$ as $i$ tends to $\infty$. Noticing that the $\tilde{W}_{k+1}(\Omega_t)$ is nonincreasing, there exists $W\geq \tilde{f}_{k+1}\circ \tilde{f}_k^{-1} (\tilde{W}_k(\Omega))$ such that $\tilde{W}_{k+1}(\Omega_t)\rightarrow W$ as $t\rightarrow\infty$. Hence we have $\tilde{W}_{k+1}(\Omega_\infty)=W$. If $\Omega_\infty$ is not a geodesic ball, choose a subsequence  $\{\Omega_{j_i}\}\subseteq\{\Omega_j\}$ such that 
$$d_{\mathbb{H}^{n+1}}(\Omega_{j_i,t_i},\Omega_\infty)\leq\frac{1}{i}.$$
Applying Corollary \ref{yanhuajuli} and  arguing as above, we have that there exists $I$ large enough and $0<T'$ small enough such that for any  $i>I$ there holds
$$-\frac{\partial}{\partial t}\tilde W_{k+1}(\Omega_{j_i,t_{i}+t})\geq C''(\rho_-(\Omega),\Omega_\infty)t, \,\,\text{for all $t\in[0,T']$}.$$
Hence we have that $\Omega_\infty$ is a geodesic ball. 
 In the following we assume that 
$d_{\mathbb{H}^{n+1}}(\Omega_{t_i},\Omega_\infty)$ and $d_{\mathbb{H}^{n+1}}(\Omega_{j,t_i},\Omega_\infty)$ are small enough without loss of generality. Then, by arguments in \cite[Section 6.3]{WeiYangZhou2024} and \cite[Section 5.3]{WeiYangZhou2025}, we have  
\begin{align}\label{controll2}
    &d_{\mathbb{H}^{n+1}}(\Omega_{j,t},B_{\rho_-(\Omega_{j,t})}(p_{j,t}))+d_{\mathbb{H}^{n+1}}(p_{j,t},p_\infty)\nonumber\\
    \leq& C(\rho_-(\Omega_\infty))(d_{\mathbb{H}^{n+1}}(\Omega_{j,t_i},\Omega_\infty)^\frac{1}{2}+d_{\mathbb{H}^{n+1}}(\Omega_{j,t_i},\Omega_\infty)^\frac{1}{4}),\quad t\geq t_i
\end{align}
Here $p_{j,t_i}$ is the center of the inner ball of $\Omega_{j,t_i}$ and $p_{\infty}$ is the center of the inner ball of $\Omega_{\infty}$.
Now, we first choose $i$ large enough such that $d_{\mathbb{H}^{n+1}}(\Omega_{t_i},\Omega_{\infty})$ is small enough. Noticing that the flow preserves $\tilde{W}_k$, then we can choose $j$ large enough such that $d_{\mathbb{H}^{n+1}}(\Omega_{t_i},\Omega_{j,t_i})$, $|\tilde{W}_{k}(\Omega_{j,t_i})-\tilde{W}_k(\Omega_\infty)|$ are  small enough. Hence, by \eqref{controll2}, we have that $|\rho_-(\Omega_{j,t})- \rho_-(\Omega_{\infty})|$ are small enough for all $t\in [t_i,\infty)$. Noticing that 
\begin{align*}
    d_{\mathbb{H}^{n+1}}(\Omega_{j,t},\Omega_{\infty})\leq d_{\mathbb{H}^{n+1}}(p_{j,t},p_{\infty})+|\rho_-(\Omega_{j,t})- \rho_-(\Omega_{\infty})|+d_{\mathbb{H}^{n+1}}(\Omega_{j,t},B_{\rho_-(\Omega_{j,t})}(p_{j,t})),
\end{align*} then, applying \eqref{controll2} again, we have that $d_{\mathbb{H}^{n+1}}(\Omega_{j,t},\Omega_{\infty})$ is small enough for all $t>t_i$. 
For any $\varepsilon>0$, by the argument above, we can find $t_i$ and $J$ such that for any $t>t_i$ and $j>J$ there holds
$$d_{\mathbb{H}^{n+1}}(\Omega_{j,t},\Omega_{\infty})<\varepsilon.$$
Then, for any $t>t_i$, noticing that $$d_{\mathbb{H}^{n+1}}(\Omega_{t},\Omega_{\infty})\leq d_{\mathbb{H}^{n+1}}(\Omega_{j,t},\Omega_{\infty})+d_{\mathbb{H}^{n+1}}(\Omega_{j,t},\Omega_{t})$$ and letting $j$ tend to $\infty$, we have $$d_{\mathbb{H}^{n+1}}(\Omega_{t},\Omega_{\infty})\leq\varepsilon.$$
Hence we have $\Omega_t\rightarrow\Omega_{\infty}$ as $t\rightarrow \infty$.
\end{proof}

Theorem \ref{weakflowthm} immediately implies the characterization of equality in the modified Alexandrov-Fenchel inequality in Theorem \ref{A-F}.

\section*{Acknowledgements}
	The author would like to thank Prof. Yong Wei for discussions. This work is supported by National Key Research and Development Program of China 2021YFA1001800.

\section*{Statements and Declarations}
\noindent\textbf{Competing interests.}
The author has no competing interests.

\noindent\textbf{Data availability.}
The author has no research data outside the submitted manuscript file.

\end{document}